\DeclareMathOperator{\diam}{diam}
\DeclareMathOperator{\vol}{Vol}
\DeclareMathOperator{\conv}{conv}
\def\R{\mathbb{R}}
\def\cC{\mathcal{C}}
\def\cL{\mathcal{L}}
\def\cM{\mathcal{M}}
\def\cN{\mathcal{N}}
\def\cP{\mathcal{P}}
\def\cU{\mathcal{U}}
\def\cS{\mathcal{S}}
\def\cX{\mathcal{X}}
\def\cY{\mathcal{Y}}
\def\cL{\mathcal{L}}
\def\d{\delta}
\def\oconv{\conv^{\circ}}
\newcommand{\E}{\mathbb{E}} 
\newcommand{\given}{\;|\;}
\newcommand{\mean}[1] {\E\left\{{#1}\right\}}
\newcommand{\meanx}[1] {\E\{{#1}\}}
\newcommand{\cmean}[2] {\E\left\{#1\given #2\right\}}
\newcommand{\ind}{\boldsymbol{\mathbbm{1}}} 
\newcommand{\var}[1]{\mathrm{Var}\param{{#1}}}
\newcommand{\set}[1]{\left\{#1\right\}}
\newcommand{\norm}[1]{\left\|#1\right\|}
\newcommand{\param}[1]{\left(#1\right)}
\newcommand{\abs}[1] {\left| {#1}\right|}
\newcommand{\prob}[1]{\mathbb{P}\left(#1\right)}
\newcommand{\eps}{\epsilon}
\newcommand{\by}{\mathbf{y}}
\newcommand{\bx}{\mathbf{x}}
\newcommand{\bv}{\mathbf{v}}
\def\hr{\hat{r}}
\newtheorem{lem}{Lemma}[section]
\newtheorem{thm}[lem]{Theorem}
\newtheorem{prop}[lem]{Proposition}
\newtheorem{cor}[lem]{Corollary}
\theoremstyle{definition}
\newtheorem{defn}[lem]{Definition}
\newcommand{\cech}{\v{C}ech }
\newcommand{\Rd}{\R^d}
\newcommand{\Rm}{\R^m}
\newcommand{\iid}{\mathrm{i.i.d.}}
\def\Nk{N_{k,n}}
\def\Sk{S_{k,n}}
\def\Fk{F_{k,n}}
\def\bk{\beta_{k,n}}
\def\Nkt{\widehat{N}_{k,n}}
\def\bkm{\beta_{k-1,n}}
\def\nabrn{\nabla_{r_n}}
\def\nabsn{\nabla_{s_n}}
\newcommand{\rnd}{r_n^d}
\newcommand{\rnm}{r_n^m}
\newcommand{\rnh}{\hat{r}_n}
\newcommand{\rnmk}{r_n^{mk}}
\newcommand{\factorm}{n^{k+1} \rnmk}
\def\bfactorm {n^{k+2}r_n^{m(k+1)}}
\newcommand{\ninf}{n\to\infty}
\newcommand{\pois}[1]{\mathrm{Poisson}\param{{#1}}}
\newcommand{\grn}{g_{r_n}}
\newcommand{\hrn}{h_{r_n}}
\newcommand{\fmax}{f_{\max}}
\newcommand{\fmin}{f_{\min}}
\newcommand{\limninf}{\lim_{\ninf}}
\newcommand{\dtv}[2]{d_{\mathrm{TV}}\param{{#1},{#2}}}
\newcommand{\bs}{\backslash}
\def\CC{\check{C}}
\numberwithin{equation}{section}
\def\d{{\rm d}}
\begin{document}

\begin{frontmatter}

\title{The Topology of Probability Distributions on Manifolds}
\runtitle{The Topology of Probability Distributions on Manifolds}

\begin{aug}

\author{\fnms{Omer}  \snm{Bobrowski}\corref{}\ead[label=e1]{omer@math.duke.edu}}\thanksref{t1},
\and
\author{\fnms{Sayan} \snm{Mukherjee}\ead[label=e2]{sayan@stat.duke.edu}}\thanksref{t2}

\thankstext{t1}{OB was supported by DARPA: N66001-11-1-4002Sub\#8} 
 \thankstext{t2}{SM is pleased to acknowledge the support of NIH (Systems Biology): 5P50-GM081883, AFOSR: FA9550-10-1-0436, NSF CCF-1049290, and NSF DMS-1209155.}
 
\address{Department of Mathematics,\\ Duke University, Durham NC 27708\\ \printead{e1}}
  \address{Departments of Statistical Science,\\ Computer Science, and Mathematics\\
  	Institute for Genome Sciences \& Policy \\
	 Duke University, Durham NC 27708\\
          \printead{e2}}

\affiliation{Duke University}
\runauthor{Bobrowski and Mukherjee}
\end{aug}

\begin{abstract}
Let $\cP$ be a set of $n$ random points in $\R^d$, generated from a probability measure on a $m$-dimensional manifold $\cM\subset\R^d$. In this paper we study the homology of $\cU(\cP,r)$ --  the union of $d$-dimensional balls of radius $r$ around $\cP$, as $n\to \infty$, and $r\to 0$. In addition we study the critical points of $d_{\cP}$ --  the distance function from the set $\cP$. These two objects are known to be related via Morse theory. We present limit theorems for the Betti numbers of $\cU(\cP,r)$, as well as for number of critical points of index $k$ for $d_{\cP}$. Depending on how fast $r$ decays to zero as $n$ grows, these two objects exhibit different types of limiting behavior. In one particular case ($n r^m \ge C \log n$), we show that the Betti numbers of $\cU(\cP,r)$ perfectly recover the Betti numbers of the original manifold $\cM$, a result which is of significant interest in topological manifold learning.
\end{abstract}

\begin{keyword}[class=AMS]
\kwd[Primary ]{60D05, 60F15, 60G55}
\kwd[; secondary ]{55U10}
\end{keyword}

\begin{keyword}
\kwd{Random complexes}
\kwd{Point process}
\kwd{Random Betti numbers}
\kwd{Stochastic topology}
\end{keyword}

\end{frontmatter}

\section{Introduction}
The incorporation of geometric and topological concepts for statistical inference is at the heart of spatial point process models, manifold learning, and topological data analysis. The motivating principle behind manifold learning is using low dimensional geometric summaries of the data for statistical inference \cite{AswaniBickelTomlin,BelkinNiyLap,ChenMuller, RoheChatterjeeYu,WuMukZhou}.  In topological data analysis, topological summaries of data are used to infer or extract underlying structure in the data \cite{TaylorWorsley2,Genoveseetal,MischaikowWanner, Adleretal, TaylorWorsley1}. In the analysis of spatial point processes, limiting distributions of integral-geometric quantities such as area and boundary length \cite{Diggle2003,MW2003,Ripley,ms05}, Euler characteristic of patterns of discs centered at random points \cite{ms05,stoyan_stochastic_1987},  and the Palm mean (the mean number of pairs of points within a radius $r$) \cite{Diggle2003, MW2003,stoyan_stochastic_1987,Ripley} have been used to characterize parameters of point processes, see \cite{ms05} for a short review.

A basic research topic in both manifold learning and topological data analysis is understanding the distribution of geometric and topological quantities generated by a stochastic process. In this paper we consider the standard model in topological data analysis and manifold learning -- the stochastic process is a random sample of points $\cP$ drawn from a distribution supported on a compact $m$-dimensional manifold $\cM$, embedded in $\R^d$. In both geometric and topological data analysis, understanding the local neighborhood structure of the data is important. Thus, a central parameter in any analysis is the size $r$ (radius) of a local neighborhood and how this radius scales with the number of observations.

We study two different, yet related, objects. The first object is the union of the $r$-balls around the random sample, denoted by $\cU(\cP,r)$. For this object, we wish to study its homology, and in particular its Betti numbers. Briefly, the Betti numbers are topological invariants measuring the number of components and holes of different dimensions.
Equivalently, all the results in this paper can be phrased in terms of the \cech complex $\CC(\cP,r)$. A simplicial complex is a collection of vertices, edges, triangles, and higher dimensional faces, and can be thought of as a generalization of a graph. The \cech complex $\CC(\cP,r)$ is simplicial complex where each $k$-dimensional face corresponds to an intersection of $k+1$ balls in $\cU(\cP,r)$ (see Definition \ref{def:cech_complex}). By the famous `Nerve Lemma' (cf. \cite{borsuk_imbedding_1948}), $\cU(\cP,r)$ has the same homology as $\CC(\cP,r)$.
The second object of study is the distance function from the set $\cP$, denoted by $d_{\cP}$, and its critical points. The connection between these two objects is given by Morse theory, which will be reviewed later. In a nutshell, Morse theory describes how critical points of a given function either create or destroy homology elements (connected components and holes) of sublevel sets of that function.

We characterize the limit distribution of the number of critical points of $d_{\cP}$, as well as the Betti numbers of $\cU(\cP,r)$.
Similarly to many phenomena in random geometric graphs  as well as random geometric complexes in Euclidean spaces \cite{kahle_random_2011,kahle_limit_2010,Luna:2009, penrose_random_2003},
the qualitative behavior of these distributions falls into three main categories based on how the radius $r$ scales with the number of samples $n$. This behavior is determined by the term $nr^m$, where $m$ is the intrinsic dimension of the manifold. This term can be thought of as the expected number of points in a ball of radius $r$. We call the different categories -- the sub-critical ($nr^m\to0$), critical ($nr^m\to \lambda$) and super-critical ($nr^m\to\infty$) regimes. The union $\cU(\cP,r)$ exhibits very different limiting behavior in each of these three regimes. In the sub-critical regime,  $\cU(\cP,r)$ is very sparse and consists of many small particles, with very few holes. In the critical regime, $\cU(\cP,r)$ has $O(n)$ components as well as holes of any dimension $k<m$. From the manifold learning perspective, the most interesting regime would be the super-critical. One of the main result in this paper (see Theorem \ref{thm:lim_betti_sup}) states that if we properly choose the radius $r$ within the super-critical regime, the homology of the random space $\cU(\cP,r)$ perfectly recovers the homology of the original manifold $\cM$. This result extends the work in \cite{niyogi_finding_2008}  for a large family of distributions on $\cM$, requires much weaker assumptions on the geometry of the manifold, and is proved to happen almost surely.

The study of critical points for the distance function provides additional insights on the behavior of $\cU(\cP,r)$ via Morse theory, we return to
this later in the paper. While Betti numbers deal with `holes' which are typically determined by global phenomena, the structure of critical points is mostly local in nature.
Thus, we are able to derive precise results for critical points even in cases where we do not have precise analysis of Betti numbers.
One of the most interesting consequence of the critical point analysis in this paper relates to the Euler characteristic of $\cU(\cP,r)$. 
One way to think about the Euler characteristic of a topological spaces $\cS$ is as integer ``summary" of the Betti numbers given by $\chi(\cS) = \sum_{k} (-1)^k \beta_k(\cS)$. Morse theory enables us to compute $\chi(\cU(\cP,r))$  using the critical points of the distance function $d_{\cP}$ (see Section \ref{sec:crit_range}). This computation may provide important insights on the behavior of the Betti numbers in the critical regime. We note that the equivalent result for Euclidean spaces appeared in \cite{bobrowski_distance_2011}.

In topological data analysis there has been work on understanding properties of random abstract simplicial complexes generated from stochastic processes \cite{adler_persistent_2010,ABW,aronshtam_vanishing_2010,bobrowski_distance_2011,kahle_topology_2009, kahle_random_2011, kahle_limit_2010, linial2006homological, penrose_random_2003,penrose_limit_2011} and non-asymptotic bounds on  the convergence or consistency of topological summaries  as the number of points increase \cite{BenMukWang2012,bubenik_statistical_2009,bubenik_statistical_2007,ChaCohLie2009,chung_persistence_2009,niyogi_finding_2008,niyogi_topological_2011}. The central idea in  these papers has been to study statistical properties of topological summaries of point cloud data. There has also been an effort to characterize the topology of a distribution (for example a noise model) \cite{adler_persistent_2010,ABW,kahle_limit_2010}. Specifically, the results in our paper adapt the results in \cite{bobrowski_distance_2011,kahle_random_2011,kahle_limit_2010} from the setting of a distribution in Euclidean space to one supported on a manifold.

There is a natural connection of the results in this paper with results in point processes, specifically random set models such as the Poisson-Boolean model \cite{MeesterRoy96}. The stochastic process we study in this paper is an example of a random set model -- stochastic models that place probability distributions on regions or geometric objects \cite{Matheron1975,Molchanov2005}. Classically, people have studied limiting distributions of quantities such as volume, surface area, integral of mean curvature and Euler characteristic generated from the random set model. Initial studies examined  second order statistics, summaries of observations that measure position or interaction among points, such as the distribution function of nearest neighbors, the spherical contact distribution function, and a variety of other summaries such as Ripley's K-function, the L-function and the pair correlation function, see \cite{Diggle2003, MW2003,Ripley, stoyan_stochastic_1987}. It is known that there are limitations in only using second order statistics since one can state different point processes that have the same second order statistics \cite{BS1984}. In the spatial statistics literature our work is related to the use of morphological functions for point processes where a ball of radius $r$ is placed around each point sampled from the point process and the topology or morphology of the union of these balls is studied.  Our results are also related to ideas in the statistics and statistical physics of random fields, see \cite{adler_random_2007, AuffingerBenArous,bobrowski2012euler, worsley_boundary_1995, worsley_estimating_1995}, a random process on a manifold can be thought of as an approximation of excursion sets of Gaussian random fields or energy landscapes.

The paper is structured as follows. In Section \ref{sec:topology} we give a brief introduction to the topological objects we study in this paper.   In Section \ref{sec:setup} we state the probability model and define the relevant topological and geometric quantities of study. In sections \ref{sec:results} and \ref{proofs} we state our main results and proofs, respectively.

\section{Topological Ingredients}
\label{sec:topology}
In this paper we study two topological objects generated from a finite random point cloud $\cP \subset\R^d$ (a set of points in $\R^d$).
\begin{enumerate}
\item Given the set $\cP$ we define
\begin{equation}\label{eq:def_union_balls}
	\cU(\cP, \eps) := \bigcup_{p\in \cP} B_\eps(p),
\end{equation}
where $B_\eps(p)$ is a $d$-dimensional ball of radius $\eps$ centered at $p$. Our interest in this paper is in characterizing the \emph{homology} --
in particular the \emph{Betti numbers} of this space, i.e.\! the number of components, holes, and other types of voids in the space.
\item We define the distance function from $\cP$ as
\begin{equation}\label{eq:def_dist_fn}
	d_{\cP}(x) := \min_{p\in \cP}\norm{x-p},\quad x\in \Rd.
\end{equation}
As a real valued function, $d_{\cP}:\R^d\to \R$ might have critical points of different types (i.e.\! minimum, maximum and saddle points). We would like to study the amount and type of these points.
\end{enumerate}

In this section we give a brief introduction to the topological concepts behind these two objects.
Observe that  the  sublevel sets of the distance function are
\[
	d_{\cP}^{-1}((-\infty,r]) := \set{x\in\R^d : d_{\cP}(x) \le r} =  \cU(\cP,r).
\]
Morse theory, discussed later in this section, describes the interplay between critical points of a function and the homology of its sublevel sets, and hence provides the link between our two objects of study.

\subsection{Homology and Betti Numbers}

Let $X$ be a topological space. The $k$-th Betti number of $X$, denoted by $\beta_k(X)$ is the rank of $H_k(X)$ -- the $k$-th homology group of $X$.
This definition assumes that the reader has a basic grounding in algebraic topology. Otherwise, the reader should be willing to accept a definition of $\beta_k(X)$  as the number of  $k$-dimensional `cycles' or `holes' in $X$, where a $k$-dimensional hole can be thought of as anything that can be continuously transformed into the boundary of a  $(k+1)$-dimensional shape. The zeroth Betti number,
$\beta_0(X)$, is merely the number of connected components in $X$. For example, the $2$-dimensional torus $T^2$ has a single connected component, two non-trivial $1$-cycles, and a $2$-dimensional void. Thus, we have that $\beta_0(T^2) = 1,\ \beta_1(T^2) = 2,$ and $\beta_2(T^2)=1$. Formal definitions of homology groups and Betti numbers can be found in
\cite{hatcher_algebraic_2002, munkres1984elements}.

\subsection{Critical Points of the Distance Function}\label{sec:crit_pts}

The classical definition of critical points using calculus is as follows. Let $f:\R^d\to\R$ be a $C^2$ function. A point $c\in \R$ is called a \textit{critical point} of $f$ if $\nabla f (c) =0$, and the real number $f(c)$ is called a \textit{critical value} of $f$. A critical point $c$ is called \textit{non-degenerate} if the Hessian matrix $H_f(c)$ is non-singular. In that case, the \textit{Morse index} of $f$ at $c$, denoted by $\mu(c)$ is the number of negative eigenvalues of $H_f(c)$. A $C^2$ function $f$ is a \textit{Morse function} if all its critical points are non-degenerate, and its critical levels are distinct.

Note, the distance function $d_{\cP}$ is not everywhere differentiable, therefore the definition above does not apply. However, following \cite{gershkovich_morse_1997}, one can still define a notion of non-degenerate critical points for the distance function, as well as their Morse index.
Extending Morse theory to functions that are non-smooth has been developed for a variety of applications  \cite{baryshnikov2013min, Bryzgalova78,gershkovich_morse_1997, Matov82}. The class of functions studied in these papers have been the minima (or maxima) of a functional
and called `min-type' functions. In this section, we specialize those results to the case of the distance function.

We start with the local (and global) minima of $d_{\cP}$, the points of
$\cP$ where $d_{\cP} = 0$, and call these
critical points with index $0$. For higher indices,
 we have the following definition.

\begin{defn}\label{def:crit_pts}
A point $c\in\R^d$ is \emph{a critical point of  index $k$ of $d_{\cP}$}, where $1 \le k \le d$, if there exists a subset $\cY$ of $k+1$ points in $\cP$ such that:
\begin{enumerate}
\item $\forall y\in \cY:  d_{\cP}(c) = \norm{c-y} $, and, $\forall p\in \cP \backslash \cY$ we have $\norm{c-p} > d_{\cP}(p)$.
\item The points in $\cY$ are in general position (i.e.\! the $k+1$ points
of $\cY$ do not lie in a $(k-1)$-dimensional affine space).
\item $c \in \oconv(\cY)$,
where $\oconv(\cY)$ is the interior of the convex hull of $\cY$ (an open $k$-simplex in this case).
\end{enumerate}
\end{defn}
The first condition implies that $d_{\cP} \equiv d_{\cY}$ in a small neighborhood of $c$.
The second condition implies that the points in $\cY$ lie on a unique $(k-1)$- dimensional sphere. We shall use the following notation:
\begin{align}
S(\cY) &= \textrm{The unique $(k-1)$-dimensional sphere containing $\cY$},\\
C(\cY) &= \textrm{The center of $S(\cY)$ in $\R^d$}, \label{eq:def_C}\\
R(\cY) &= \textrm{The radius of $S(\cY)$} , \label{eq:def_R}\\
B(\cY) &= \textrm{The open ball in $\R^d$ with radius $R(\cY)$ centered at $C(\cY)$}.\label{eq:def_B}
\end{align}
Note that $S(\cY)$ is a $(k-1)$-dimensional sphere, whereas $B(\cY)$ is a $d$-dimensional ball. Obviously, $S(\cY) \subset B(\cY)$, but unless $k=d$, $S$ is
{\it not} the boundary of $B$.
Since the critical point $c$ in Definition \ref{def:crit_pts} is equidistant from all the points in $\cY$, we have that $c=C(\cY)$. Thus, we say that $c$ is the unique index $k$ critical point \textit{generated} by the $k+1$ points in $\cY$.
The last statement can be rephrased as follows:
\begin{lem} \label{lem:gen_crit_point}
A subset $\cY\subset \cP$ of $k+1$ points in general position generates an index $k$ critical point if, and only if, the following two conditions hold:
\begin{enumerate}[label=\bf{CP{\arabic*}}]
\item \label{cp1} $\quad C(\cY) \in \oconv(\cY)$,
\item \label{cp2} $\quad \cP \cap {B(\cY)}= \emptyset$.
\end{enumerate}
Furthermore, the critical point is $C(\cY)$ and the critical value is $R(\cY)$.
\end{lem}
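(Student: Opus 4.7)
The plan is to verify both implications of the equivalence by tracking the geometric content of each of the three conditions in Definition~\ref{def:crit_pts}. The main observation is that these conditions together force the candidate critical point $c$ to coincide with the circumcenter $C(\cY)$, after which CP1 and CP2 are essentially restatements of conditions (3) and (1).

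First I would prove the forward direction. Suppose $\cY$ generates an index $k$ critical point $c$. The equidistance part of condition (1) gives $\|c - y\| = d_{\cP}(c)$ for every $y \in \cY$, so $c$ lies in the locus of points equidistant from the $k+1$ points of $\cY$. By the general position assumption in condition (2), this locus is an affine subspace of $\R^d$ of codimension $k$, orthogonal to the $k$-dimensional affine hull of $\cY$ and meeting that hull in the single point $C(\cY)$. Condition (3) places $c$ in $\oconv(\cY)$, which is contained in the affine hull of $\cY$, so $c = C(\cY)$ and therefore $d_{\cP}(c) = R(\cY)$. CP1 is now condition (3) restated, and CP2 follows from the strict inequality in condition (1): every $p \in \cP \setminus \cY$ satisfies $\|c - p\| > d_{\cP}(c) = R(\cY)$, so $p \notin B(\cY)$, while the points of $\cY$ themselves sit on $S(\cY) \subset \partial B(\cY)$ and therefore also avoid the open ball $B(\cY)$.

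For the converse, suppose CP1 and CP2 hold, and set $c := C(\cY)$. By the definition of the circumradius, $\|c - y\| = R(\cY)$ for every $y \in \cY$, while CP2 yields $\|c - p\| \ge R(\cY)$ for every $p \in \cP \setminus \cY$. Thus $d_{\cP}(c) = R(\cY)$ and the equidistance part of condition (1) is satisfied. General position (condition (2)) is assumed, and CP1 is exactly condition (3). Consequently $\cY$ generates an index $k$ critical point located at $C(\cY)$ with critical value $R(\cY)$.

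The main obstacle is the identification $c = C(\cY)$ in the forward direction, which requires combining the equidistance condition with the affine hull containment $c \in \oconv(\cY)$; neither ingredient alone is sufficient, since the equidistant locus is a full codimension-$k$ affine subspace when $k < d$. A minor technical point in the converse is upgrading the non-strict inequality $\|c - p\| \ge R(\cY)$ coming from CP2 to the strict inequality implicit in Definition~\ref{def:crit_pts}; this is automatic almost surely for the absolutely continuous distributions considered in the paper, since the event that some $p \in \cP \setminus \cY$ lies exactly on $S(\cY)$ has probability zero.
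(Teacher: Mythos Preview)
Your argument is correct and in the same spirit as the paper's treatment, but more careful. The paper does not give a formal proof of this lemma: it introduces it as a direct rephrasing of Definition~\ref{def:crit_pts}, with the single justifying sentence that since $c$ is equidistant from all points of $\cY$ one has $c=C(\cY)$. You rightly observe that equidistance alone is insufficient when $k<d$, because the equidistant locus is a $(d-k)$-dimensional affine subspace; one must also invoke condition~(3), $c\in\oconv(\cY)$, to force $c$ into the affine hull of $\cY$ and hence onto $C(\cY)$. That is exactly the missing step, and you handle it cleanly.

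Your remark on the strict-versus-non-strict inequality in the converse is also well taken: \ref{cp2} with the open ball $B(\cY)$ yields only $\|c-p\|\ge R(\cY)$ for $p\in\cP\setminus\cY$, whereas Definition~\ref{def:crit_pts} asks for a strict inequality. The paper is silent on this point, implicitly relying (as you note) on the fact that for points drawn from a density on $\cM$, the event that some $p\in\cP\setminus\cY$ lands exactly on the sphere $\partial B(\cY)$ has probability zero.
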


Figure \ref{fig:crit_pts} depicts the generation of an index $2$ critical point in $\R^2$ by  subsets of $3$ points.
We shall also be interested in `local' critical points, points where $d_{\cP}(c)\le \eps$. This adds a third condition,
\begin{enumerate}[label=\bf{CP\arabic*}]
\setcounter{enumi}{2}
\item \label{cp3.eps} $\quad R(\cY) \le \eps$.
\end{enumerate}
\begin{figure}[h]
\centering
 \includegraphics[scale=0.3]{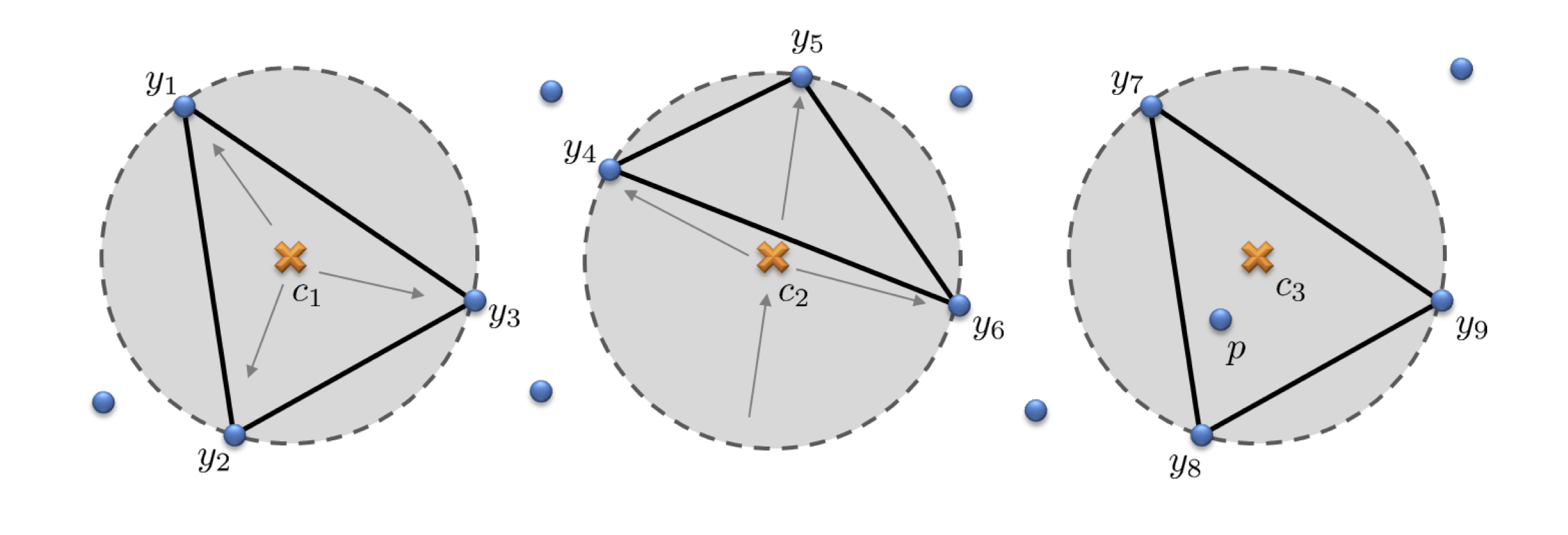}
\caption{\label{fig:crit_pts} Generating a critical point of index $2$ in $\R^2$, a maximum point.
The small blue disks are the points of  $\cP$. We examine three subsets of $\cP$: $\cY_1 = \set{y_1,y_2,y_3}$, $\cY_2 = \set{y_4,y_5,y_6}$, and $\cY_3 = \set{y_7,y_8,y_9}$. $S(\cY_i)$ are the dashed circles, whose centers are $C(\cY_i) = c_i$. The shaded balls are $B(\cY_i)$, and the interior of the triangles are $\oconv(\cY_i)$.
(1) We see that both  $C(\cY_1) \in \oconv(\cY_1)$ \eqref{cp1} and $\cP\cap B(\cY_1) = \emptyset$ \eqref{cp2}. Hence $c_1$ is a critical point of index $2$.
(2) $C(\cY_2) \not\in \oconv(\cY_2)$, which means that \eqref{cp1} does not hold, and therefore $c_2$ is not a critical point (as can be observed from the flow arrows).
(3) $C(\cY_3) \in \oconv(\cY_3)$, so \eqref{cp1} holds. However, we have $\cP\cap B(\cY_3) = \set{p}$, so \eqref{cp2} does not hold, and therefore $c_3$ is also not a critical point. Note that in a small neighborhood of $c_3$ we have $d_{\cP} \equiv d_{\set{p}}$, completely ignoring the existence of $\cY_3$.}
\end{figure}

The following indicator functions, related to CP1--CP3, will appear often.
\begin{defn}\label{def:ind_fns}
Using the notation above,
\begin{align}
h^c(\cY) & := \ind\set {C(\cY) \in \oconv(\cY)}\qquad\qquad\quad\ \ (\ref{cp1}) \label{eq:def_h} \\
h^c_\eps(\cY) & := h^c(\cY) \ind_{[0,\eps]} (R(\cY)) \qquad\qquad \qquad\qquad (\ref{cp1}+\ref{cp3})\label{eq:def_h_eps}\\
g^c_\eps(\cY,\cP) & := h^c_\eps(\cY) \ind\set{ \cP \cap {B(\cY)} = \emptyset} \ \qquad (\ref{cp1}+\ref{cp2}+\ref{cp3}) \label{eq:def_g_eps}
\end{align}
\end{defn}

\subsection{Morse Theory}\label{sec:morse}

The study of homology is strongly connected to the study of critical points of real valued functions. The link between them is called Morse theory, and we shall describe it here briefly. For a deeper introduction, we refer the reader to \cite{milnor_morse_1963}.

Let $\cM$ be a smooth manifold embedded in $\R^d$, and let $f:\cM\to\R$ be a Morse function (see Section \ref{sec:crit_pts}).

The main idea of Morse theory is as follows. Suppose that $\cM$ is a closed manifold (a compact manifold without a boundary), and let $f:\cM\to \R$ be a Morse function.
Denote
\[
\cM_{\rho} := f^{-1}((-\infty,\rho]) = \set{x\in \cM : f(x) \le \rho}\subset \cM
\] (sublevel sets of $f$).
If there are no critical levels in $(a,b]$, then $\cM_a$ and $\cM_b$ are \emph{homotopy equivalent}, and in particular have the same homology.
Next, suppose that $c$ is a critical point of $f$ with Morse index $k$, and let $v=f(c)$ be the critical value at $c$. Then the homology of $\cM_{\rho}$ changes at $v$ in the following way. For a small enough $\eps$ we have that the homology of $\cM_{v+\eps}$ is obtained from the homology of $\cM_{v-\eps}$ by either adding  a generator to $H_k$ (increasing $\beta_k$ by one) or terminating a generator of $H_{k-1}$ (decreasing $\beta_{k-1}$ by one). In other words, as we pass a critical level, either a new $k$-dimensional hole is formed, or an existing $(k-1)$-dimensional hole is terminated (filled up).

Note, that while classical Morse theory deals with Morse functions  (and in particular, $C^2$)  on compact manifolds, its extension for min-type functions presented in \cite{gershkovich_morse_1997} enables us to apply these concepts to the distance function $d_{\cP}$ as well.

\subsection{\cech Complexes and the Nerve Lemma}\label{sec:cech}

The \cech complex generated by a set of points $\cP$ is a simplicial complex, made up of vertices, edges, triangles and higher dimensional faces.
While its general definition is quite broad, and uses intersections of arbitrary nice sets, the following special case using intersection of Euclidean balls will be sufficient for our analysis.

\begin{defn}[\cech complex]\label{def:cech_complex}
Let $\cP = \set{x_1,x_2,\ldots}$ be a collection of points in $\R^d$, and let $\eps>0$. The \cech complex $\CC(\cP, \eps)$ is constructed as follows:
\begin{enumerate}
\item The $0$-simplices (vertices) are the points in $\cP$.
\item An $n$-simplex $[x_{i_0},\ldots,x_{i_n}]$ is in $\CC(\cP,\eps)$ if $\bigcap_{k=0}^{n} {B_{\eps}(x_{i_k})} \ne \emptyset$.
\end{enumerate}
\end{defn}
Figure \ref{fig:cech} depicts a simple example of a \cech complex in $\R^2$.
\begin{figure}[h!]
\centering
  \includegraphics[scale=0.4]{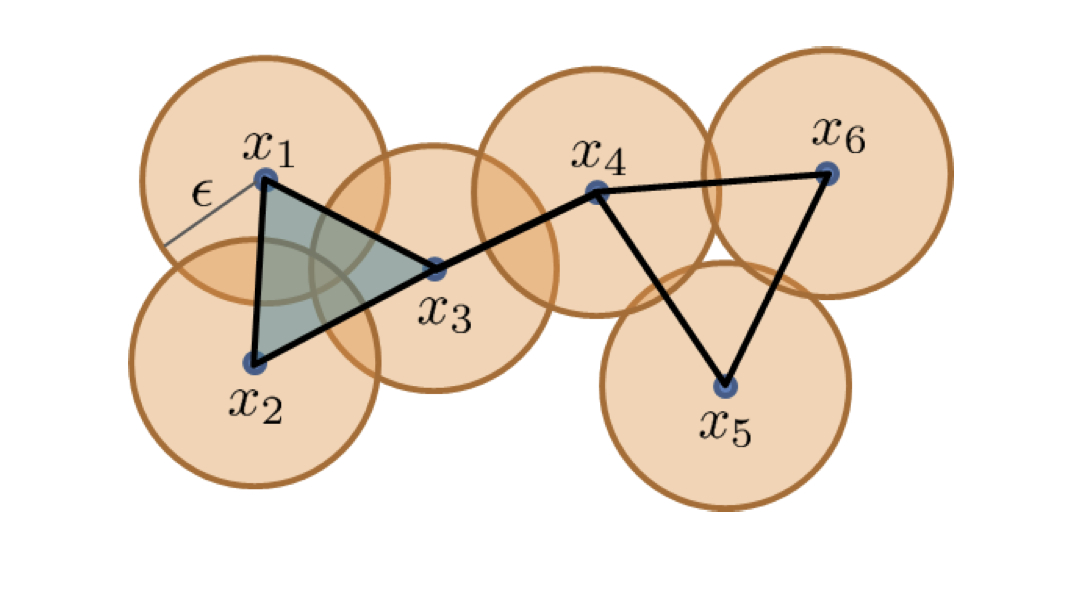}
\caption{\label{fig:cech} The \cech complex $\CC(\cP,\eps)$, for $\cP = \set{x_1,\ldots,x_6}\subset \R^2$, and some  $\eps$. The complex contains 6 vertices, 7 edges, and a single 2-dimensional face.}
\end{figure}
An important result, known as the `Nerve Lemma', links the \cech complex
 $\CC(\cP,\eps)$ and the neighborhood set  $\cU(\cP, \eps)$,
and states that they are  homotopy equivalent, and in particular they have the same homology groups  (cf. \cite{borsuk_imbedding_1948}). Thus, for example,
they have the same Betti numbers.

Our interest in the \cech complex is twofold. Firstly, the \cech complex is a high-dimensional analogue of a geometric graph. The study of random geometric graphs is well established (cf.\! \cite{penrose_random_2003}). However, the study of higher dimensional geometric complexes is at its early stages.   Secondly, many of the proofs in this paper are combinatorial in nature. Hence, it is usually easier to examine the \cech complex $\CC(\cP, \eps)$, rather than the geometric structure $\cU(\cP,\eps)$.

\section{Model Specification and Relevant Definitions}
\label{sec:setup}

In this section we specify the stochastic process on a manifold that generates the point sample and topological summaries we will characterize.

The point processes we examine in this paper live in $\R^d$ and are supported on a $m$-dimensional manifold $\cM\subset\R^d$ ($m<d$). Throughout this paper we assume that $\cM$ is closed (i.e.\! compact and without a boundary) and smooth.

Let $\cM$ be such a manifold, and let $f:\cM\to\R$ be a probability density function on $\cM$, which we assume to be bounded and measurable. If $X$ is a random variable in $\R^d$ with density $f$, then for every $A\subset \R^d$
\[	
	F(A) := \prob{X\in A} = \int_{A\cap \cM} f(x) \, \d x,
\]
where $\d x$ is the volume form on $\cM$.

We consider two models for generating point clouds on the manifold $\cM$: \begin{enumerate}
\item[(1)] \emph{Random sample: } $n$ points are drawn $\cX_n = \set{X_1, X_2, \ldots, X_n} \stackrel{iid}{\sim} f$, \\
\item[(2)] \emph{Poisson process: } the points are drawn from a spatial Poisson process with intensity function  $\lambda_n := n f$.
The  spatial Poisson process has the following two properties:
\begin{enumerate}
\item[(a)] For every region $A\subset \cM$, the number of points in the region $N_A := \abs{\cP_n \cap A}$ is distributed as a Poisson random variable
\[
	N_A \sim \pois{n F(A)};
\]
\item[(b)] For every $A,B\subset \cM$ such that $A\cap B = \emptyset$, the random variables $N_A$ and $N_B$ are independent.
\end{enumerate}
\end{enumerate}

These two models behave very similarly. The main difference is that the number of points in $\cX_n$ is exactly $n$, while the number of points in $\cP_n$ is distributed $\pois{n}$. Since the Poisson process has computational advantages, we will present all the results and proofs in this paper in terms of $\cP_n$.
However, the reader should keep in mind that the results also apply to samples generated by the first model ($\cX_n$),  with some minor adjustments. For a full analysis of the critical points in the Euclidean case for both models, see \cite{bobrowski2012thesis}.

The stochastic objects we study in this paper are the union $\cU(\cP_n, \eps)$ (defined in \eqref{eq:def_union_balls}), and the distance function $d_{\cP_n}$ (defined in \eqref{eq:def_dist_fn}). The random variables we examine are the following.
Let $r_n$ be a sequence of positive numbers, and define
\begin{equation}
	\bk := \beta_k(\cU(\cP_n,r_n)),
\end{equation}
to be the $k$-th Betti number of $\cU(\cP_n,r_n)$, for $0\le k \le d-1$. The values $\bk$ form a set of well defined integer random variables.

For $0\le k \le d$, denote by $\cC_{k,n}$ the set of critical points with index $k$ of the
distance function $d_{\cP_n}$.
Let $r_n$ be positive , and define the set of `local' critical points as
\begin{equation} \label{eq:ckl_def}
\cC_{k,n}^L := \set{c\in\cC_{k,n} : d_{\cP_n}(c) < r_n} = \cC_{k,n}\cap \cU(\cP_n,r_n);
\end{equation}
and its size as
\begin{equation} \label{eq:def_nk}
N_{k,n} := \abs{\cC_{k,n}^L} .
\end{equation}
The values $\Nk$ also form a set of integer valued random variables. From the discussion in Section \ref{sec:morse} we know that there is a strong connection between the set of values $\set{\bk}_{k=0}^{d-1}$ and $\set{\Nk}_{k=0}^{d}$.
We are interested in studying the limiting behavior of these two sets of random variables, as $n\to\infty$, and $r_n\to 0$.

\section{Results}
\label{sec:results}

In this section we present limit theorems for the random variables $\bk$ and $\Nk$, as $n\to\infty$, and $r_n\to 0$.
Similarly to the results presented in \cite{bobrowski_distance_2011, kahle_random_2011}, the limiting behavior splits into three main regimes. In \cite{bobrowski_distance_2011, kahle_random_2011} the term controlling the behavior is $n\rnd$, where $d$ is the ambient dimension. This value can be thought of as representing the expected number of points occupying a ball of radius $r_n$. Generating samples from a $m$-dimensional manifold (rather than the entire $d$-dimensional space) changes the controlling term to be $n\rnm$. This new term can be thought of as the expected number of points occupying a geodesic ball of radius $r_n$ on the manifold. We name the different regimes the \emph{sub-critical} ($n\rnm\to 0$), the \emph{critical} ($n\rnm\to\lambda$), and the super-critical ($n\rnm\to\infty$).
In this section we will present limit theorems for each of these regimes separately. First, however, we present a few statements common to all regimes.


The index $0$ critical points (minima) of $d_{\cP_n}$ are merely the points in $\cP_n$. Therefore, $N_{0,n} = \abs{\cP_n} \sim \pois{n}$, so our focus is on  the higher indexes critical points.

Next, note that if the radius $r_n$ is small enough, one can show that $\cU(\cP_n,r_n)$ can be continuously transformed into a subset $\cM'$ of $\cM$ (by a `deformation retract'), and this implies that $\cU(\cP_n,r_n)$ has the same homology as $\cM'$. Since $\cM$ is $m$-dimensional, $\beta_k(\cM) = 0$ for every $k>m$, and the same goes for every subset of $\cM$.
In addition, except for the coverage regime (see Section \ref{sec:results_supcrit}), $\cM'$ is a union of  strict subsets of the connected components of $\cM$, and thus must have $\beta_m(\cM') = 0$ as well. Therefore, we have that $\bk$ = 0 for every $k\ge m$. By Morse theory, this also implies that $\Nk=0$ for every $k>m$.
 The results we present in the following sections therefore focus on  $\beta_{0,n},\ldots,\beta_{m-1,n}$ and $N_{1,n},\ldots,N_{m,n}$ only.

\subsection{The Sub-Critical Range $(n\rnm \to 0)$}

In this regime, the radius $r_n$ goes to zero so fast, that the average number of points in a ball of radius $r_n$ goes to zero. Hence, it is very unlikely for points to connect, and $\cU(\cP_n,r_n)$ is very sparse. Consequently this phase is sometimes called the `dust' phase. We shall see that in this case $\beta_{0,n}$ is dominating all the other Betti numbers, which appear in a descending order of magnitudes.

\begin{thm}[Limit mean and variance]\label{thm:lim_mean_var_subcrit}
If $n\rnm\to 0$, then
\begin{enumerate}
\item For  $1\le k \le m-1$,
\[
\lim_{\ninf} \frac{\mean{\bk}}{\bfactorm} = \lim_{\ninf} \frac{\var{\bk}}{\bfactorm}  = \mu^b_{k},
\]
and
\[
	\limninf n^{-1}\mean{\beta_{0,n}} = 1.
\]
\item For $1 \le k \le m$,
\[
\lim_{\ninf} \frac{\mean{\Nk}}{\factorm} = \lim_{\ninf}\frac{\var{\Nk}}{\factorm} =  \mu^c_k.
\]
\end{enumerate}
where
\begin{align*}
\mu^b_{k} &= \frac{1}{(k+2)!}\int_{\cM}f^{k+2}(x)dx \int_{(\Rm)^{k+1}} h^b_1(0,\by)d\by,\\
\mu_k^c &= \frac{1}{(k+1)!}\int_{\cM}f^{k+1}(x)dx \int_{(\Rm)^k} h_1^c(0,\by)d\by.
\end{align*}
 The function $h^b_\eps$ is an indicator function on subsets $\cY$ of size $k+2$, testing that a subset forms a non-trivial $k$-cycle, i.e.
 \begin{equation}\label{eq:def_hb}
 h^b_{\eps}(\cY) := \ind\set{\beta_k(\cU(\cY,\eps)) = 1},
 \end{equation}
The function $h_\eps^c$ is defined in \eqref{eq:def_h_eps}.

Finally, we note that for $\by = (y_1,\ldots,y_{k+1}) \in (\R^d)^{k+1}$,   $h_\eps^b(0,\by) := h_\eps^b(0,y_1,\ldots,y_{k+1})$,
and for $\by = (y_1,\ldots,y_{k}) \in (\R^d)^{k}$,   $h_\eps^c(0,\by) := h_\eps^c(0,y_1,\ldots,y_{k})$
\end{thm}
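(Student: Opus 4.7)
Both quantities will be written as sums of locally defined indicators over subsets of $\cP_n$, after which the Mecke/Slivnyak formula for Poisson processes controls the first and second moments. Concretely, by Lemma 2.2 (the ``CP1+CP2+CP3'' characterization) one has the exact identity
\[
N_{k,n} \;=\; \sum_{\cY \subset \cP_n,\, |\cY|=k+1} g^c_{r_n}(\cY,\cP_n),
\]
and a parallel (approximate) identity will be used for $\beta_{k,n}$. The key feature of the sub-critical regime $n r_n^m \to 0$ is that the expected number of points inside any ball of radius $O(r_n)$ is $o(1)$, so the ``empty-ball'' factor $\ind\{\cP_n \cap B(\cY) = \emptyset\}$ has expectation $e^{-n F(B(\cY))} \to 1$ uniformly over the relevant configurations. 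This decouples the local combinatorial geometry from the ambient point process and produces clean closed-form limits.

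\textbf{Critical points.} By the multivariate Mecke formula applied to $\lambda_n = nf$,
\[
\E[N_{k,n}] \;=\; \frac{n^{k+1}}{(k+1)!}\int_{\cM^{k+1}} h^c_{r_n}(\vec{y}) \,e^{-n F(B(\vec{y}))}\prod_{i=0}^{k} f(y_i)\,d\vec{y}.
\]
I would rescale by fixing $y_0 = x \in \cM$ and writing $y_i = \exp_x(r_n z_i)$ for $i=1,\dots,k$ using the exponential map of $\cM$ (or a smooth local chart). The Jacobian is $r_n^{mk}(1+O(r_n^2))$, the densities $f(y_i)$ converge to $f(x)$, the exponential factor converges to $1$ since $n F(B(\vec{y})) = O(n r_n^m) \to 0$, and the scale-invariance $h^c_{r_n}(0,r_n\vec{z}) = h^c_1(0,\vec{z})$ yields the stated formula for $\mu_k^c$. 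Dominated convergence is applicable because $h^c_1$ is supported on bounded configurations (the generated simplex must contain its circumcenter, forcing boundedness of $\vec{z}$). The second moment of $N_{k,n}$ splits into contributions from pairs $(\cY,\cY')$ indexed by $j = |\cY \cap \cY'|$: the $j=0$ piece equals $\E[N_{k,n}]^2$ up to an $o(\factorm)$ correction (again from $e^{-nF}\to 1$), while the pieces $1 \le j \le k+1$ have order $n^{2(k+1)-j} r_n^{m(2k-j+1)} = \factorm \cdot (n r_n^m)^{k-j+1}$; only $j=k+1$ survives at the correct scale and reproduces the mean, giving $\Var(N_{k,n})/\factorm \to \mu^c_k$.

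\textbf{Betti numbers.} This is the main obstacle, because $\bk$ is not a local sum. I would define an auxiliary random variable
\[
\hat\beta_{k,n} \;=\; \sum_{\cY \subset \cP_n,\,|\cY|=k+2} h^b_{r_n}(\cY)\,\ind\{\cY \text{ forms an isolated component of } \cU(\cP_n, r_n)\},
\]
which counts minimal ``empty shell'' $k$-cycles (boundaries of $(k+1)$-simplices with nothing else nearby). For $\hat\beta_{k,n}$ the moment analysis is identical in spirit to the $N_{k,n}$ case: Mecke's formula plus a rescaling by $r_n$ give
$\E[\hat\beta_{k,n}] \sim \mu^b_k\, n^{k+2} r_n^{m(k+1)}$, with a matching variance. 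For $k=0$, $\beta_{0,n}$ counts components and is bounded between $|\cP_n| - (\text{edges of }\CC(\cP_n,r_n))$-type quantities, whose mean analysis gives $\E[\beta_{0,n}]/n \to 1$ since the expected number of edges is $O(n^2 r_n^m) = o(n)$. For $k \ge 1$ the substantive step is to show $\E|\bk - \hat\beta_{k,n}| = o(n^{k+2} r_n^{m(k+1)})$: any non-minimal $k$-cycle must use at least $k+3$ points inside a connected region of diameter $O(r_n)$, and a further Mecke computation shows the expected number of such configurations is $O(n^{k+3} r_n^{m(k+2)}) = n^{k+2} r_n^{m(k+1)} \cdot O(n r_n^m) = o(n^{k+2} r_n^{m(k+1)})$. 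Combined with a similar bound on the second moment difference, this transfers the limit from $\hat\beta_{k,n}$ to $\bk$. The subtlety is justifying that every $k$-cycle is ``witnessed'' within a single component whose size can be controlled; I would handle this by using the Nerve Lemma to pass to $\CC(\cP_n,r_n)$ and then bounding cycles component-by-component, where in the dust regime each component is almost surely a simplex on $O(1)$ vertices.
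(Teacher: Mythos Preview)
Your proposal is correct and follows essentially the same route as the paper: Palm/Mecke for the first moment, the exponential-map change of variables $x_i=\exp_x(r_n y_i)$ with DCT, the sandwich $S_{k,n}\le\beta_{k,n}\le S_{k,n}+F_{k,n}$ where $S_{k,n}$ is your $\hat\beta_{k,n}$ and $F_{k,n}$ bounds contributions from components on $\ge k+3$ vertices (shown to be $O(n^{k+3}r_n^{m(k+2)})=o(\bfactorm)$), and the variance split by $j=|\cY_1\cap\cY_2|$ with only $j=k+1$ surviving. The only cosmetic differences are that the paper handles $\beta_{0,n}$ via the Morse inequality $N_{0,n}-N_{1,n}\le\beta_{0,n}\le N_{0,n}$ rather than an edge count, and it tracks the curvature correction $h^c_{r_n}(x,\exp_x(r_n\by))=h^c_1(0,\nabla_{r_n}(x,\by))\to h^c_1(0,\by)$ explicitly rather than invoking flat scale-invariance.
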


Note that these results  are analogous to the limits in the Euclidean case, presented in \cite{kahle_limit_2010} (for the Betti numbers) and \cite{bobrowski_distance_2011} (for the critical points). 
In general, as is common for results of this nature,
it is difficult to express the integral formulae above in a more transparent form.
Some numerics as well as special cases evaluations are presented in \cite{bobrowski_distance_2011}.


Since $n\rnm\to 0$, the comparison between the different limits yields the following picture,
\[
  \begin{array}{ccccccccc}
    \mean{N_{0,n}} &\gg &\mean{N_{1,n}} & \gg & \mean{N_{2,n}} & \gg & \mean{N_{3,n}}& \gg  \cdots  \gg  & \mean{N_{m,n}}\\
	{\rotatebox{90}{\scalebox{1}[1]{$\approx$}}} &  & &  & {\rotatebox{90}{\scalebox{1}[1]{$\approx$}}} &  &{\rotatebox{90}{\scalebox{1}[1]{$\approx$}}} & & {\rotatebox{90}{\scalebox{1}[1]{$\approx$}}}\\
       \mean{\beta_{0,n}}&& \gg  & & \mean{\beta_{1,n}} & \gg & \mean{\beta_{2,n}} & \gg \cdots \gg & \mean{\beta_{m-1,n}} ,
  \end{array}
\]
where by $a_n\approx b_n$ we mean that $a_n/b_n \to c\in(0,\infty)$ and by $a_n \gg b_n$ we mean that $a_n/b_n \to \infty$. This diagram implies that in the sub-critical phase the dominating Betti number is $\beta_0$. It is significantly less likely to observe any cycle, and it becomes less likely as the cycle dimension increases. In other words, $\cU(\cP_n,r_n)$ consists mostly of small disconnected particles, with relatively few holes.

Note that the limit of the term $\factorm$ can be either zero, infinity, or anything in between. For each of these cases, the limiting distribution of either $\bkm$ or $\Nk$ is completely different. The results for the number of critical points are as follows.

\begin{thm}[Limit distribution]\label{thm:dist_subcrit}
Let $n\rnm\to 0$, and  $1 \le k \le m$,
\begin{enumerate}
\item  If $\lim_{n\to\infty} n^{k+1}r_n^{k} = 0$, then
\[
\Nk \xrightarrow{L^2} 0.
\]
If, in addition, $\sum_{n=1}^\infty \factorm < \infty$, then
\[
\Nk \xrightarrow{a.s.} 0.
\]
\item If $\lim_{n\to\infty} n^{k+1}r_n^{mk} = \alpha \in (0,\infty)$, then
\[
\Nk \xrightarrow{\cL} \pois{{\alpha \mu^c_k}}.
\]
\item If $\lim_{n\to\infty} n^{k+1}r_n^{mk} = \infty$, then
\[
\frac{\Nk - \mean{\Nk}}{({\factorm})^{1/2}} \xrightarrow{\cL} \cN(0,\mu^c_k).
\]
\end{enumerate}
\end{thm}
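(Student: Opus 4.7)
The plan is to write $N_{k,n}$ as a sum over $(k+1)$-subsets of the Poisson point cloud and analyze it via its factorial moments. With the indicator $g^c_{r_n}$ from Definition~\ref{def:ind_fns},
\[
N_{k,n} \;=\; \sum_{\cY \subset \cP_n,\; |\cY| = k+1} g^c_{r_n}(\cY, \cP_n),
\]
so all three regimes reduce to moment estimates for this sum, with Theorem~\ref{thm:lim_mean_var_subcrit} providing the baseline: both $\E[N_{k,n}]$ and $\mathrm{Var}(N_{k,n})$ are $(1+o(1))\mu_k^c\, n^{k+1} r_n^{mk}$. For Part~1, both tend to zero, whence $\E[N_{k,n}^2] \to 0$ and $N_{k,n} \xrightarrow{L^2} 0$; for the a.s.\ statement, Markov gives $\PP(N_{k,n} \geq 1) \leq \E[N_{k,n}]$, and the summability hypothesis combined with Borel--Cantelli yields $N_{k,n} \to 0$ almost surely.

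For Part~2, I would use the method of factorial moments. Expanding
\[
\E\bigl[(N_{k,n})_{(j)}\bigr] \;=\; \E\Bigl[\sum_{\cY_1,\ldots,\cY_j \text{ distinct}} \prod_{i=1}^{j} g^c_{r_n}(\cY_i, \cP_n)\Bigr]
\]
and applying the multivariate Mecke formula converts the outer expectation into an integral over $\cM^{j(k+1)}$ against $n^{j(k+1)} f^{\otimes j(k+1)}$, with the void factor evaluated on an independent Poisson sample. Decompose by intersection pattern of the $\cY_i$'s: the generic piece, with the $j$ subsets pairwise vertex-disjoint and their defining balls $B(\cY_i)$ pairwise disjoint, scales like $n^{j(k+1)} r_n^{jmk} \to \alpha^j$, and the geometric constant factorizes into $j$ independent copies of the integral defining $\mu_k^c$ (using the same tangent-space localization as in Theorem~\ref{thm:lim_mean_var_subcrit}). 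The void condition drops out of the limit because, for disjoint balls, Poisson independence gives $\PP(\cP_n \cap B(\cY_i) = \emptyset) = e^{-nF(B(\cY_i))} \to 1$ under $n r_n^m \to 0$. Any configuration with a shared vertex or an overlap between some $B(\cY_i)$'s loses at least one free integration of order $n r_n^m = o(1)$ and is therefore negligible. Hence $\E[(N_{k,n})_{(j)}] \to (\alpha\mu_k^c)^j$ for every fixed $j$, which characterizes $\pois{\alpha\mu_k^c}$.

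For Part~3, I would use the method of cumulants, applied to $\widetilde N_n := (N_{k,n} - \E[N_{k,n}])/\sqrt{n^{k+1} r_n^{mk}}$. Since $N_{k,n}$ is a bounded local Poisson $U$-statistic, the $j$-th cumulant $\kappa_j(N_{k,n})$ expands as a sum over \emph{connected} diagrams on $j$ copies of a $(k+1)$-subset, where edges encode shared vertices or overlapping balls $B(\cY_i)$. A standard estimate shows each connected diagram contributes $O(n^{k+1} r_n^{mk})$, because connectivity forces every subset beyond the first to share a vertex or overlap a ball with the preceding ones, costing one factor of $n r_n^m$ per added subset. Hence $\kappa_j(\widetilde N_n) = O\bigl((n^{k+1} r_n^{mk})^{1 - j/2}\bigr) \to 0$ for $j \geq 3$, while $\kappa_2(\widetilde N_n) \to \mu_k^c$ by Theorem~\ref{thm:lim_mean_var_subcrit}. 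Convergence of all cumulants to those of $\cN(0,\mu_k^c)$ gives convergence in distribution.

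The main obstacle, shared between Parts~2 and~3, is the combinatorial bookkeeping: one must verify that in the subcritical scaling $n r_n^m \to 0$ only the fully separated configurations survive at leading order, and that the empty-ball conditions $\set{\cP_n \cap B(\cY_i) = \emptyset}$, which are \emph{not} independent when the $B(\cY_i)$ overlap, never conspire to create a non-negligible correction. This is essentially the enumeration already carried out in the Euclidean analysis of \cite{bobrowski_distance_2011}, with the manifold geometry entering only through the density weight $f^{k+1}$ and a local Euclidean approximation of $\cM$ near each critical configuration.
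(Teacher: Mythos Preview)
Your Part~1 is exactly the paper's argument: the $L^2$ statement follows from Theorem~\ref{thm:lim_mean_var_subcrit}, and the almost-sure statement from Markov plus Borel--Cantelli.

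For Parts~2 and~3 you take a genuinely different route. The paper invokes Stein's method: for the Poisson limit it uses the Stein--Chen bound for dependency graphs (Theorem~\ref{thm:prelim:stein_bern}), and for the CLT it uses the normal-approximation bound of Theorem~\ref{thm:clt_stein}, both reducing to third- and fourth-moment estimates of the same flavor as the first- and second-moment computations already carried out. You instead propose the method of factorial moments for the Poisson limit and the method of cumulants for the CLT. Both of your routes are correct and are standard alternatives in the random-geometric-complex literature; the diagrammatic bookkeeping you describe (connected vs.\ separated configurations, each shared vertex or overlap costing a factor $nr_n^m$) is precisely what is needed, and your final paragraph correctly identifies that the manifold geometry enters only through the local Euclidean approximation already handled in Theorem~\ref{thm:lim_mean_var_subcrit}. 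The trade-off is this: Stein's method requires only a finite number of moment bounds (up to order four) and delivers explicit total-variation or Kolmogorov rates as a by-product, whereas your factorial-moment/cumulant approach is conceptually cleaner but asks you to control an infinite family of moments or cumulants uniformly in the order $j$, so the combinatorics must be packaged carefully (e.g.\ a bound of the form $\kappa_j(N_{k,n}) \le C_j\, n^{k+1}r_n^{mk}$ with constants $C_j$ not growing too fast). Either way the substance is the same moment analysis; the paper's choice just truncates it earlier.
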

For $\beta_{k,n}$ the theorem above needs two adjustments. Firstly, we need to replace the term $n^{k+1}r_n^{mk}$ with $n^{k+2}r_n^{m(k+1)}$, and $\mu^c_k$ with $\mu_k^b$ (similarly to Theorem \ref{thm:lim_mean_var_subcrit}). Secondly, the proof of the central limit theorem in part 3 is more delicate, and requires an additional assumption that  $nr_n^m \le n^{-\eps}$ for some $\eps > 0$.
 
\subsection{The Critical Range ($n\rnm \to \lambda \in (0,\infty)$)}\label{sec:crit_range}

In the dust phase, $\beta_{0,n}$ was $O(n)$, while the other Betti numbers of $\cU(\cP_n, r_n)$ were of a much lower magnitude. In the critical regime, this behavior changes significantly, and we observe that all the Betti numbers (as well as counts of all critical points) are $O(n)$.
In other words, the behavior of $\cU(\cP_n, r_n)$ is much more complex, in the sense that it consists of many cycles of any dimension $1\le k \le m-1$.

Unfortunately, in the critical regime, the combinatorics of cycle counting becomes highly complicated. However, we can still prove the following qualitative result, which shows that $\mean{\bk} = O(n)$.
%
%

\begin{thm}\label{thm:bounds_betti_crit}
If $n\rnm\to\lambda\in(0,\infty)$, then for $1\le k \le m-1$,
\[
	0 < \liminf_{n\to\infty} n^{-1}{\mean{\bk}} \le \limsup_{n\to\infty} n^{-1}{\mean{\bk}} < \infty.
\]
\end{thm}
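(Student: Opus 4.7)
Both bounds reduce $\beta_{k,n}$ to a count amenable to the Slivnyak--Mecke formula on the Poisson process, followed by a rescaling $y_i = (x_i - x_0)/r_n$ in a local chart at $x_0 \in \cM$. The upper bound uses the Morse inequality $\beta_{k,n} \le N_{k,n}$; the lower bound uses a sum over isolated $(k+2)$-tuples of $\cP_n$ that individually support a non-trivial $k$-cycle.

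\textbf{Upper bound.} The extension of Morse theory to min-type functions (Section~\ref{sec:morse}) gives
\[
\beta_k(\cU(\cP_n, r_n)) \le N_{k,n},
\]
since each critical point of $d_{\cP_n}$ of index $k$ with critical value at most $r_n$ either adds a generator to $H_k$ or kills one in $H_{k-1}$ as the radius grows from $0$. The Slivnyak--Mecke formula applied to $g^c_{r_n}$ of~\eqref{eq:def_g_eps}, together with the Poisson void probability, gives
\[
\mean{N_{k,n}} = \frac{n^{k+1}}{(k+1)!} \int_{\cM^{k+1}} h^c_{r_n}(\vec x)\, e^{-n F(B(\vec x))}\, \prod_{i=0}^{k} f(x_i)\, dx_i.
\]
Bounding the void factor by $1$ and rescaling $y_i = (x_i - x_0)/r_n$ in a tangent chart at $x_0$ converts this into a bounded integral times $n^{k+1} r_n^{mk} = n(nr_n^m)^k \to n \lambda^k$. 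Hence $\limsup_{n\to\infty} n^{-1} \mean{\beta_{k,n}} < \infty$.

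\textbf{Lower bound.} For $\cY \subset \cP_n$ with $|\cY| = k+2$, let $h^b_{r_n}(\cY)$ be the indicator~\eqref{eq:def_hb} that $\cY$ supports a non-trivial $k$-cycle, and define the isolation indicator
\[
J_{r_n}(\cY) := \ind\bigl\{(\cP_n \setminus \cY) \cap \cU(\cY, 3 r_n) = \emptyset\bigr\}.
\]
When $h^b_{r_n}(\cY) J_{r_n}(\cY) = 1$, the connected component of $\cU(\cP_n, r_n)$ containing $\cY$ coincides with $\cU(\cY, r_n)$, which has $\beta_k = 1$; distinct isolated tuples lie in disjoint components, so their contributions to $\beta_k$ are additive. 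Thus
\[
\beta_{k,n} \ge \sum_{\cY} h^b_{r_n}(\cY) J_{r_n}(\cY),
\]
and a second Slivnyak--Mecke computation yields the first moment
\[
\frac{n^{k+2}}{(k+2)!} \int_{\cM^{k+2}} h^b_{r_n}(\vec x)\, e^{-n F(\cU(\vec x, 3 r_n))} \prod_{i=0}^{k+1} f(x_i)\, dx_i.
\]
In the critical regime the exponential factor is bounded below by a positive constant uniformly in $\vec x$, and for $k+1 \le m$ there is an open, positive-measure set of configurations $\vec y \in (\R^m)^{k+1}$ with $h^b_1(0, \vec y) = 1$ (the vertices of a hollow $(k+1)$-simplex, whose union of unit balls is homotopy-equivalent to $S^k$). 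After rescaling, this bounds the integral below by a positive constant times $n^{k+2} r_n^{m(k+1)} = n(nr_n^m)^{k+1} \to n \lambda^{k+1}$, giving $\liminf_{n\to\infty} n^{-1} \mean{\beta_{k,n}} > 0$.

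\textbf{Main obstacle.} The principal technical work lies in uniformly controlling the manifold geometry in the rescaling step: one must identify $\cM$ locally with $T_{x_0} \cM \cong \R^m$ via normal coordinates and verify that the pulled-back density and volume Jacobian converge uniformly to $f(x_0)$ and $1$ respectively as $r_n \to 0$ (using compactness of $\cM$ and smoothness of $f$ and $\cM$), and that the isolation radius $3 r_n$ is eventually below the reach of $\cM$ so that ambient $\R^d$-ball conditions faithfully encode topological isolation on $\cM$. A conceptually new point, absent in the sub-critical Theorem~\ref{thm:lim_mean_var_subcrit}, is that the Poisson void probability $e^{-n F(\cdot)}$ no longer tends to $1$ but merely to a strictly positive constant; this rescales the leading coefficient without affecting the $O(n)$ growth, which is exactly what the theorem claims.
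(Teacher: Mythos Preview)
Your proof is correct, and your lower bound is essentially the paper's: both count isolated minimal $(k+2)$-point cycles via a Palm/Mecke computation and observe that in the critical regime the Poisson void probability stays bounded away from zero, yielding a positive multiple of $n^{k+2}r_n^{m(k+1)}\sim n\lambda^{k+1}$. (The paper phrases isolation as ``$\CC(\cY,r_n)$ is a connected component of $\CC(\cP_n,r_n)$,'' which amounts to a $2r_n$-clearance condition rather than your $3r_n$; this only changes the constant.)

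Your upper bound, however, takes a genuinely different route. The paper uses the sandwich $S_{k,n}\le\beta_{k,n}\le S_{k,n}+F_{k,n}$, where $F_{k,n}$ counts $k$-faces in components with at least $k+3$ vertices, and shows $\mean{F_{k,n}}=O(n^{k+3}r_n^{m(k+2)})=O(n)$ by a separate Palm computation. You instead invoke the weak Morse inequality $\beta_{k,n}\le N_{k,n}$ for the distance function and bound $\mean{N_{k,n}}$ directly. Your route is shorter and exploits machinery already set up for Theorem~\ref{thm:lim_crit}; the paper's route is purely combinatorial and keeps the Betti-number analysis independent of the critical-point analysis. Both are valid, and the Morse-inequality shortcut is a nice observation that the paper does not make explicit here (though it uses the analogous bound for $\beta_{0,n}$ at the end of the proof of Theorem~\ref{thm:lim_mean_var_subcrit}).
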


Fortunately, the situation with the critical points is much better.
A critical point of index $k$ is always generated by subsets $\cY$ of exactly $k+1$ points. Therefore, nothing essentially changes in our methods when we turn to examine the limits of $\Nk$. We can prove the following limit theorems.
\begin{thm}\label{thm:lim_crit}
If $n\rnm\to \lambda\in(0,\infty)$, then for $1 \le k \le m$,
\begin{align*}
\lim_{\ninf} \frac{\mean{\Nk}}{n}&= {\gamma_{k}(\lambda)}, \\
\lim_{\ninf} \frac{\var{\Nk}}{n}&= {\sigma^2_{k}(\lambda)}, \\
\frac{\Nk - \mean{\Nk}}{\sqrt{n}} &\xrightarrow{\cL} \cN(0,\sigma^2_k(\lambda)).
\end{align*}
where
\[
\gamma_k(\lambda) := \frac{\lambda^k}{(k+1)!}  \int_{\cM}\int_{(\Rm)^{k}}f^{k+1}(x) h^c_1(0,\by) e^{-\lambda \omega_m R^m(0,\by)f(x)}d\by dx,
\]
$R,\ h_\eps^c$, are defined in \eqref{eq:def_R}, \eqref{eq:def_h_eps}, respectively.
 The expression defining $\sigma^2_k(\lambda)$ is rather complicated, and will be discussed in the proof.
\end{thm}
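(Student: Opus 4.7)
The plan is to treat $\Nk$ as a Poisson $U$-statistic of order $k+1$. Using Lemma \ref{lem:gen_crit_point},
\[
\Nk = \frac{1}{(k+1)!}\sum_{\cY \subset \cP_n,\, |\cY|=k+1} g^c_{r_n}(\cY, \cP_n \setminus \cY).
\]
Mecke's formula for the Poisson process $\cP_n$ with intensity $nf$ then gives
\[
\mean{\Nk} = \frac{n^{k+1}}{(k+1)!}\int_{\cM^{k+1}} h^c_{r_n}(\cY)\, \exp\!\left(-n\!\int_{B(\cY)\cap\cM}\! f\right) \prod_{i=0}^{k} f(y_i)\,dy_i.
\]
To take the limit, fix $y_0 = x$ and rescale the remaining points via the exponential map on $\cM$: $y_i = \exp_x(r_n v_i)$ for $i=1,\ldots,k$. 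Smoothness and compactness of $\cM$ give uniform control on reach and curvature, so the Jacobian of $\exp_x$ is $1+O(r_n^2)$, the extrinsic Euclidean sphere $S(\cY)$ and ball $B(\cY)$ are well-approximated by their tangent-space counterparts, and $h^c_{r_n}$ converges pointwise to $h^c_1$ by scale invariance. Similarly $n\int_{B(\cY)\cap\cM}f \to \lambda f(x)\,\omega_m R^m(0,v_1,\ldots,v_k)$. Using $n^{k+1}r_n^{mk} = n\lambda^k(1+o(1))$ and dominated convergence (the $h^c_1$ factor confines the $v_i$ to a bounded region) then yields $\mean{\Nk}/n \to \gamma_k(\lambda)$.

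For the variance, apply the multivariate Mecke formula and decompose $\mean{\Nk^2}$ by the overlap $j = |\cY_1 \cap \cY_2|$ of the two indexing subsets, writing $\mean{\Nk^2} = \sum_{j=0}^{k+1} I_{n,j}$. When $j=0$ the two empty-ball events would be independent if $B(\cY_1)\cap B(\cY_2)=\emptyset$, so $I_{n,0}$ splits as $\mean{\Nk}^2$ plus a boundary correction of order $n^{2(k+1)}r_n^{m(2k+1)} = \Theta(n)$ arising from pairs with overlapping $B(\cY_i)$. For $j\ge 1$, the two subsets share at least one point, so all $2(k+1)-j$ distinct points lie within $O(r_n)$ of each other, giving the scaling $I_{n,j} \sim n^{2(k+1)-j}r_n^{m(2(k+1)-j-1)} = n(nr_n^m)^{2(k+1)-j-1} = \Theta(n)$. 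Rescaling via the exponential map at one of the overlap points, exactly as in the mean argument, reduces $I_{n,j}$ to an explicit integral on $(\Rm)^{2(k+1)-j-1}$. Summing these contributions gives $\var{\Nk}/n \to \sigma_k^2(\lambda)$ with the combinatorially cumbersome but analytically routine closed form.

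For the central limit theorem, the key observation is that the kernel $g^c_{r_n}$ is strictly localized: it vanishes unless all $k+1$ arguments lie in a ball of radius $2r_n$. This places $\Nk$ squarely within the setting of general CLTs for Poisson $U$-statistics via Malliavin-Stein machinery (Reitzner-Schulte, Last-Penrose-Schulte), which bound the Wasserstein distance from $(\Nk-\mean{\Nk})/\sqrt{\var{\Nk}}$ to $\cN(0,1)$ by mixed moments of the kernel -- essentially the overlap integrals $I_{n,j}$ already controlled in the variance step. Since $\var{\Nk}=\Theta(n)$ dominates the higher-order contraction norms, the normalized statistic converges in law to $\cN(0,\sigma_k^2(\lambda))$. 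An alternative route is a blocking argument: partition $\cM$ into cells of diameter $\gg r_n$, note that the kernel's localization makes contributions of non-adjacent cells independent, and apply a dependency-graph Lindeberg CLT to the resulting nearly independent sum.

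The main technical obstacle throughout is manifold bookkeeping. One must show, uniformly in $x\in\cM$, that the extrinsic Euclidean circumscribing sphere and ball of $k+1$ nearby points on $\cM$ approximate their tangent-space analogues to sufficient order, and that geodesic balls approximate the extrinsic Euclidean balls $B(\cY)\cap\cM$, so that the limiting integrals defining $\gamma_k(\lambda)$ and $\sigma_k^2(\lambda)$ see only the flat-space objects $h^c_1$, $\oconv$, and the Euclidean $R$. Compactness and smoothness of $\cM$ provide uniform reach, curvature, and exponential-map distortion bounds that make all such corrections $O(r_n^2)$, but propagating them cleanly through the decomposition $\mean{\Nk^2}=\sum_j I_{n,j}$, especially for the overlap terms with $j\ge 1$ where several points share a common tangent-space frame, is the most delicate piece of the argument. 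Once these geometric estimates are in place, both the variance asymptotics and the CLT follow from essentially general-purpose machinery.
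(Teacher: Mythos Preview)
Your outline for the mean and the variance matches the paper's proof essentially step for step: Palm/Mecke to get the integral, the exponential-map change of variables $y_i = \exp_x(r_n v_i)$, Lemma~\ref{lem:lim_prob_vol} for the exponent, dominated convergence via the bounded support forced by $h^c_1$, and the overlap decomposition $\mean{\Nk^2} = \sum_{j=0}^{k+1} I_{n,j}$ handled exactly as you describe (see Appendix~\ref{sec:apndx_moments}).

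Two remarks on the CLT. First, a terminological point: $\Nk$ is not a Poisson $U$-statistic in the Reitzner--Schulte sense, because the kernel $g^c_{r_n}(\cY,\cP_n)$ depends on the whole process through the empty-ball condition \ref{cp2}, not just on $\cY$. The Malliavin--Stein route still works, but one must invoke the more general stabilization/second-order Poincar\'e framework (Last--Penrose--Schulte) rather than the pure $U$-statistic CLT. Second, the paper does not take that route; it uses your ``alternative,'' the dependency-graph Stein bound (Theorem~\ref{thm:clt_stein}, following \cite{penrose_random_2003,bobrowski_distance_2011}), exploiting that $g^c_{r_n}(\cY_1,\cdot)$ and $g^c_{r_n}(\cY_2,\cdot)$ are independent once $B(\cY_1)$ and $B(\cY_2)$ are disjoint. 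Both methods are driven by the same overlap moment estimates you already computed for the variance, so the difference is largely one of packaging; the Malliavin--Stein approach gives quantitative Wasserstein bounds more directly, while the dependency-graph argument keeps everything at the level of third and fourth moments and avoids introducing Malliavin calculus on Poisson space.
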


The term  $\omega_m$ stands for the volume of the unit ball in $\R^m$. 
As mentioned above, in general it is difficult to present a more explicit formula for $\gamma_k(\lambda)$. However, for $m\le 3$ and $f\equiv 1$ (the uniform distribution) it is possible to evaluate $\gamma_k(\lambda)$ (using tedious calculus arguments which we omit here). For $m=3$ these computations yield -
\begin{align*}
	\gamma_1(\lambda) &= 4(1-e^{-\frac{4}{3}\pi \lambda}),\\
	\gamma_2(\lambda) &= (1+\frac{\pi^2}{16}) (3- 3e^{-\frac{4}{3}\pi \lambda}-4\pi\lambda e^{-\frac{4}{3}\pi \lambda}),\\
	\gamma_3(\lambda) &=\frac{\pi^2}{48} (9 - 9e^{-\frac{4}{3}\pi \lambda} - 12 \pi \lambda e^{-\frac{4}{3}\pi \lambda} - 8\pi^2 \lambda^2e^{-\frac{4}{3}\pi \lambda}),
\end{align*}
and
\begin{align*}
	\frac{d}{d\lambda}\gamma_1(\lambda) &= \frac{16}{3} \pi e^{-\frac{4}{3}\pi \lambda},\\
	\frac{d}{d\lambda}\gamma_2(\lambda) &= (16+\pi^2)\frac{\pi^2}{3}\lambda e^{-\frac{4}{3}\pi \lambda},\\
	\frac{d}{d\lambda}\gamma_3(\lambda) &= \frac{2}{9}\pi^5 \lambda^2e^{-\frac{4}{3}\pi \lambda},
\end{align*}
where $\frac{d}{d\lambda}\gamma_k(\lambda)$ can be thought of as the rate at which critical points appear.
Figures \ref{fig:gammas}(a) and \ref{fig:gammas}(b)  are the graphs of these curves.

\begin{figure}[h]
\centering
\includegraphics[scale=0.3]{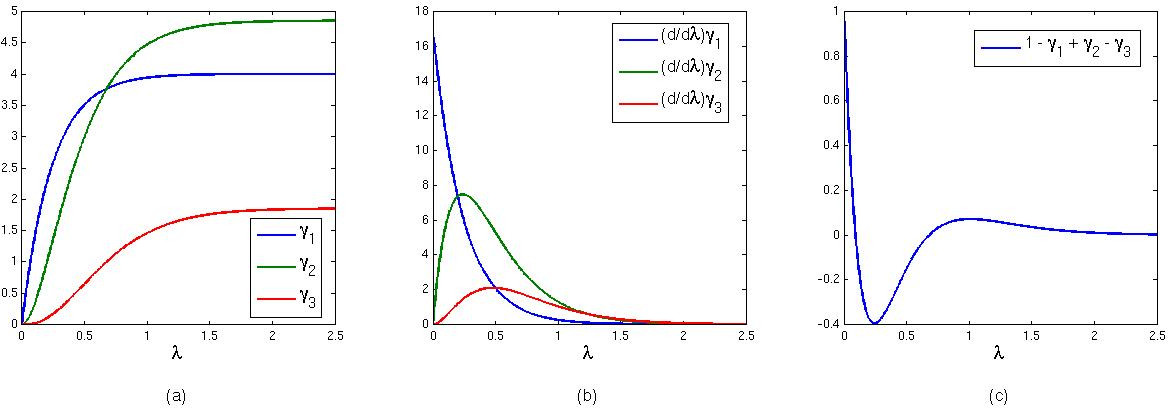}
\caption{\label{fig:gammas} The graphs of the $\gamma_k$ functions for the case where $m=3$, and $f\equiv 1$. (a) The graphs for the limiting number of critical points $\gamma_k(\lambda)$. (b) The graphs for the rate of appearance of critical points given by $\frac{d}{d\lambda}\gamma_k(\lambda)$. (c) The limiting (normalized) Euler characteristic given by $1 - \gamma_1(\lambda) + \gamma_2(\lambda) - \gamma_3(\lambda)$. }
\end{figure}

As mentioned earlier, in this regime we cannot get exact limits for the Betti numbers. However, we can use the limits of the critical points to compute the limit of another important topological invariant of $\cU(\cP_n,r_n)$ -- its Euler characteristic. The Euler characteristic $\chi_n$ of $\cU(\cP_n, r_n)$ (or, equivalently, of $\CC(\cP_n, r_n)$) has a number
of equivalent definitions. One of the definitions, via Betti numbers, is
\begin{equation}\label{eq:ec_betti}
    \chi_n = \sum_{k=0}^{m} (-1)^k \beta_{k,n}.
\end{equation}
In other words, the Euler characteristic ``summarizes" the information contained in Betti numbers to a single integer.
Using Morse theory, we can also compute $\chi_n$  from the critical points of the distance function by 
\[
	\chi_n = \sum_{k=0}^{m} (-1)^kN_{k,n}.
\]
Thus, using Theorem \ref{thm:lim_crit} we have the following result.
\begin{cor}
If $n\rnm \to \lambda \in (0,\infty)$, then
\[
\limninf n^{-1} \mean{\chi_n} = 1+\sum_{k=1}^m {(-1)^k \gamma_k(\lambda) }.
\]
\end{cor}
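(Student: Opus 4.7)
The plan is to apply the Morse-theoretic identity $\chi_n = \sum_{k=0}^{m}(-1)^k N_{k,n}$ rather than the homological definition $\chi_n = \sum_{k=0}^{m-1}(-1)^k \beta_{k,n}$, since Theorem \ref{thm:lim_crit} supplies precise first-moment asymptotics for $N_{k,n}$ in the critical regime whereas the Betti numbers are only qualitatively controlled by Theorem \ref{thm:bounds_betti_crit}. Because the sum over $k$ is finite, expectation and the limit commute without any additional effort.

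Concretely, I would take expectation and divide by $n$ to obtain
\[
n^{-1}\mean{\chi_n} = \sum_{k=0}^{m}(-1)^k\, n^{-1}\mean{N_{k,n}},
\]
and then handle the $k=0$ term separately from the rest. The local minima of $d_{\cP_n}$ are precisely the sampled points themselves, each attaining the value $0 < r_n$, so $\cC_{0,n}^L = \cP_n$ and therefore $N_{0,n} = |\cP_n| \sim \pois{n}$, giving $n^{-1}\mean{N_{0,n}} = 1$. For $1 \le k \le m$, Theorem \ref{thm:lim_crit} yields $n^{-1}\mean{N_{k,n}} \to \gamma_k(\lambda)$ directly. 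Summing produces the claimed formula
\[
\limninf n^{-1}\mean{\chi_n} = 1 + \sum_{k=1}^{m}(-1)^k \gamma_k(\lambda).
\]

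Two small points are worth verifying along the way. First, the Morse-theoretic expansion of $\chi_n$ really does involve only the \emph{local} critical points $\cC_{k,n}^L$: this is because $\chi_n$ is the Euler characteristic of the sublevel set $d_{\cP_n}^{-1}((-\infty,r_n]) = \cU(\cP_n,r_n)$, and by the standard passing-a-critical-level argument, extended to min-type functions as in \cite{gershkovich_morse_1997}, each critical point of index $k$ whose critical value lies in $(-\infty,r_n]$ contributes $(-1)^k$ to $\chi_n$ (the boundary case $R(\cY)=r_n$ has probability zero under the Poisson law and so is irrelevant). Second, the sum truncates at $k=m$ because, as noted at the start of Section \ref{sec:results}, for all sufficiently large $n$ the radius $r_n$ is so small that a deformation-retract argument forces $N_{k,n}=0$ for every $k>m$. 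The main ``obstacle'' here is essentially nil: Theorem \ref{thm:lim_crit} already does all of the analytic lifting, and the corollary reduces to a linear combination of its conclusions together with the trivial observation that $\mean{|\cP_n|}=n$.
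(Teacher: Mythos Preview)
Your proposal is correct and follows exactly the paper's approach: the paper does not give a separate proof for this corollary, simply observing that $\chi_n = \sum_{k=0}^{m}(-1)^k N_{k,n}$ by Morse theory and then invoking Theorem~\ref{thm:lim_crit} together with $N_{0,n}\sim\pois{n}$. Your additional remarks on why only local critical points contribute and why the sum truncates at $k=m$ are accurate elaborations of points the paper takes for granted.
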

This limit provides us with partial, yet important, topological information about the complex $\cU(\cP_n,r_n)$ in the critical regime. While we are not able to derive the precise limits for each of the Betti numbers individually, we can provide the asymptotic result for their ``summary". In addition, numerical experiments (cf. \cite{kahle_limit_2010}) seem to suggest that at different ranges of radii there is at most a single degree of homology which dominates the others. This implies that $\chi_n \approx (-1)^k\beta_{k,n} $ for the appropriate range. If this heuristic could be proved in the future, the result above could be used to approximate $\beta_{k,n}$ in the critical regime.
In Figure \ref{fig:gammas}(c) we present the curve of the limit Euler characteristic (normalized) for $m=3$ and $f\equiv 1$.
Finally, we note that while we presented the limit  for the first moment of the Euler characteristic, using Theorem \ref{thm:lim_crit} one should be able to prove stronger limit results as well.

\subsection{The Super-Critical Range ($n\rnm\to \infty$)}\label{sec:results_supcrit}

Once we move from the critical range into the super-critical, the complex $\cU(\cP_n,r_n)$ becomes more and more connected, and less porous. The ``noisy" behavior (in the sense that there are many holes of any possible dimension) we observed in the critical regime vanishes. This, however does not happen immediately.
The scale at which major changes occur is when $n\rnm \propto \log n$.

The main difference between this regime and the previous two, is that while the number of critical points is still $O(n)$, the Betti numbers are of a much lower magnitude. In fact, for $r_n$ big enough, we observe that $\bk \sim \beta_k(\cM)$, which implies that these values are $O(1)$.

For the super-critical phase we have to assume that $\fmin := \inf_{x\in\cM} f(x) >0$. This condition is required for the proofs, but is not a technical issue only. Having a point $x\in \cM$ where $f(x)=0$  implies that in the vicinity of $x$ we expect to have relatively few points in $\cP_n$. Since the radius of the balls generating $\cU(\cP_n, r_n)$ goes to zero, this area might become highly porous or disconnected , and look more similar to other regimes. However, we postpone this study for future work.

We start by describing the limit behavior of the critical points, which is very similar to that of the critical regime.
\begin{thm}\label{thm:lim_supcrit}
If $r_n\to 0$, and $n\rnm \to \infty$, then for $1 \le k \le m$,
\begin{align*}
\lim_{\ninf} \frac{\mean{\Nk}}{n}&= {\gamma_{k}(\infty)}, \\
\lim_{\ninf} \frac{\var{\Nk}}{n}&= {\sigma^2_{k}(\infty)},  \\
\frac{\Nk - \mean{\Nk}}{\sqrt{n}} &\xrightarrow{\cL} \cN(0,\sigma^2_k(\infty)).
\end{align*}
where
\[
\gamma_k(\infty) = \lim_{\lambda\to\infty} \gamma_k(\lambda) = \frac{1}{(k+1)!}\int_{(\Rm)^k} {h^c(0,\by) e^{-\omega_m R^m(0,\by)} \,d\by},
\]
$R,\ h^c$, are defined in \eqref{eq:def_R}, \eqref{eq:def_h}, respectively.
\end{thm}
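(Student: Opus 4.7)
The theorem is the super-critical analogue of Theorem \ref{thm:lim_crit} and the proof follows the same three ingredients: realize $\Nk$ as a sum of local indicators over $(k+1)$-tuples of $\cP_n$, reduce the moments via the Mecke/Slivnyak formula, and pass to the limit under a suitable local rescaling. The essential change is the choice of scale. When $nr_n^m\to\infty$, condition \textbf{CP3} ($R(\cY)\le r_n$) is vastly weaker than \textbf{CP2}, since any $(k+1)$-tuple whose circumball has radius $\gg n^{-1/m}$ has super-polynomially small probability of being empty. The effective length scale for a critical point is therefore $\rho_n:=n^{-1/m}$, which satisfies $\rho_n\ll r_n$, so in the limit the \textbf{CP3} indicator disappears and only the scale-invariant indicator $h^c$ and the empty-ball exponential survive. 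The hypothesis $\fmin>0$ makes this picture uniform in the base point on $\cM$.

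\textbf{Mean.} By Lemma \ref{lem:gen_crit_point} and Definition \ref{def:ind_fns},
$\Nk=\sum_{\cY\subset\cP_n,\,|\cY|=k+1} g^c_{r_n}(\cY,\cP_n)$, so the Mecke formula gives
\[
\mean{\Nk}=\frac{n^{k+1}}{(k+1)!}\int_{\cM^{k+1}} h^c_{r_n}(\bx)\,e^{-nF(B(\bx))}\prod_{i=0}^k f(x_i)\,d\bx.
\]
Fixing $x_0=x$, I would introduce normal (exponential) coordinates $x_i=\exp_x(\rho_n y_i)$; by compactness of $\cM$ this is valid uniformly for $n$ large, with Jacobian $1+O(\rho_n^2)$. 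Scale invariance of $h^c$ together with local flatness yields $h^c_{r_n}(\bx)=h^c(0,\by)\ind\{R(0,\by)\le r_n/\rho_n\}(1+o(1))$, and the indicator increases to $1$ because $r_n/\rho_n=(nr_n^m)^{1/m}\to\infty$. The density contributes $f(x)^{k+1}(1+o(1))$, and $nF(B(\bx))\to f(x)\omega_m R^m(0,\by)$. After the further substitution $\by\mapsto f(x)^{-1/m}\bz$, which absorbs $f(x)$ into the kernel at the cost of a factor $f(x)^{-k}$, dominated convergence (the Gaussian-type decay $e^{-\omega_m R^m}$ provides the domination) produces
\[
n^{-1}\mean{\Nk}\;\to\;\frac{1}{(k+1)!}\int_\cM f(x)\,dx\int_{(\R^m)^k} h^c(0,\bz)\,e^{-\omega_m R^m(0,\bz)}\,d\bz=\gamma_k(\infty).
\]

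\textbf{Variance and CLT.} For the second moment I would apply the multivariate Mecke formula to $\mean{\Nk(\Nk-1)}$ and stratify the resulting integral by the overlap $j=|\cY_1\cap\cY_2|\in\{0,\dots,k+1\}$. The $j=0$ stratum matches $\mean{\Nk}^2$ to order $o(n)$, while for each $j\ge 1$ the same local rescaling now centered at the shared cluster produces a contribution of order $n$ with an explicit limiting integral, yielding $n^{-1}\var{\Nk}\to\sigma_k^2(\infty)$. For the central limit theorem, $\Nk$ is a local Poisson $U$-statistic whose kernel has effective interaction range $O(\rho_n)$, so I would invoke a normal approximation for such functionals, either the Malliavin--Stein bound of Peccati--Reitzner or the cumulant/dependency-graph argument used in \cite{penrose_random_2003,bobrowski_distance_2011}. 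The main obstacle is uniform control of the overlap integrals: the merged empty-ball probability $e^{-nF(B(\cY_1)\cup B(\cY_2))}$ must dominate the polynomial factors in $R(\cY_1),R(\cY_2)$ uniformly in $n$, and one must verify that cumulants of order $p\ge 3$ are $o(n^{p/2})$; both reductions rely crucially on $\fmin>0$ together with the stabilization property that tuples separated by distances $\gg\rho_n$ decouple exponentially.
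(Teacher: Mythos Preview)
Your proposal is correct and follows essentially the same route as the paper: rescale by $\rho_n=s_n=n^{-1/m}$ rather than $r_n$, observe that the \textbf{CP3} indicator becomes vacuous because $r_n/s_n\to\infty$, take pointwise limits in the integrand, and absorb $f(x)$ via the substitution $\by\mapsto f(x)^{-1/m}\bz$; the variance and CLT are handled by the same overlap stratification and Stein-type normal approximation as in \cite{bobrowski_distance_2011,penrose_random_2003}. The one place where the paper is more explicit than your sketch is the dominated-convergence justification for the mean: since the $y_i$ are no longer confined to a bounded set, one needs a \emph{uniform} (in $n$) lower bound $p_c(x,\exp_x(s_n\by))\ge (1-\alpha)\,n^{-1}\omega_m R^m(0,\by)\fmin$, obtained by a compactness/uniform-continuity argument, so that the integrand is dominated by $\fmax^k f(x)\,e^{-(1-\alpha)\omega_m\fmin R^m(0,\by)}$; your phrase ``the Gaussian-type decay $e^{-\omega_m R^m}$ provides the domination'' names the limit rather than a bound valid for all $n$, and this is precisely where $\fmin>0$ already enters for the mean, not only for the higher moments.
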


$\omega_m$ is the volume of the unit ball in $\R^m$
The combinatorial analysis of the Betti numbers $\bk$ in the super-critical regime suffers from the same difficulties described in the critical regime. However, in the special case that $r_n$ is big enough so that $\cU(\cP_n,r_n)$ covers $\cM$, we can use a different set of methods to derive limit results for $\bk$.

\subsubsection*{The Coverage Regime}
In \cite{penrose_random_2003}(Section 13.2), it is shown that for samples generated on a $m$-dimensional torus, the complex $\cU(\cP_n, r_n)$ becomes connected when $nr_n^m \approx (\omega_m \fmin 2^m)^{-1} \log n$. This result could be easily extended to the general class of manifolds studied in this paper (although we will not pursue that here). 
While the complex is reaching a finite number of components ($\beta_{0,n} \to \beta_0(\cM)$), it is still possible for it to have very large Betti numbers for $k\ge 1$. 
In this paper we are interested in a threshold for which we have $\bk = \beta_k(\cM)$ for all $k$ (and not just $\beta_0$).
We will show that this threshold is when $nr_n^m = (\omega_m \fmin)^{-1}\log n$, so that $r_n$ is twice than the radius required for connectivity.

To prove this result we need two ingredients. The first one is a coverage statement, presented in the following proposition.

\begin{prop}[Coverage]\label{prop:lim_coverage}
	If $n\rnm \ge C \log n$, then:
	\begin{enumerate}
		\item If $C> (\omega_m \fmin)^{-1}$, then
		\[
			\limninf \prob{\cM \subset \cU(\cP_n, r_n)} = 1.
		\]
		\item If $C >2(\omega_m \fmin)^{-1}$, then almost surely there exists $M>0$ (possibly random), such that for every $n>M$ we have $\cM \subset \cU(\cP_n, r_n)$.			
	\end{enumerate}
\end{prop}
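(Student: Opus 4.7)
The plan is to reduce the coverage event to a finite union bound via a $\delta_n$-net on $\cM$, apply the Poisson void probabilities together with the density lower bound $f \ge \fmin$, and then invoke the Borel--Cantelli lemma for the almost sure statement.

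First, since $\cM$ is a smooth compact $m$-dimensional submanifold of $\R^d$, for any $\delta>0$ it admits a $\delta$-net $\{x_1,\ldots,x_{L(\delta)}\}\subset\cM$ in the ambient Euclidean metric with cardinality $L(\delta)\le c_1\delta^{-m}$, where $c_1$ depends only on $\cM$. Fix a small parameter $\eta\in(0,1)$ to be chosen and set $\delta_n:=\eta r_n$. The hypothesis $n\rnm\ge C\log n$ then gives $L(\delta_n)\le c_2(\eta)\,n/\log n$. If every net point $x_i$ has some $p\in\cP_n$ within Euclidean distance $(1-\eta)r_n$, then any $y\in\cM$ is within $\delta_n=\eta r_n$ of some $x_i$, and hence within $r_n$ of a point of $\cP_n$. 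By the Poisson void probabilities,
\[
\prob{\cM\not\subset\cU(\cP_n,r_n)}\le \sum_{i=1}^{L(\delta_n)}\exp\!\big(-nF(B_{(1-\eta)r_n}(x_i))\big).
\]

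Second, I would establish a uniform lower bound on $F(B_r(x))$ for small $r$. Because $\cM$ is a compact smooth submanifold of $\R^d$, it has positive reach, and for $r$ sufficiently small the $m$-volume of $B_r(x)\cap\cM$ is $(1+o(1))\omega_m r^m$, uniformly in $x\in\cM$. Together with $f\ge\fmin>0$ this yields
\[
F\big(B_{(1-\eta)r_n}(x)\big)\ge (1-\eta)^m\omega_m\fmin r_n^m(1+o(1))
\]
uniformly in $x$. Setting $\alpha(\eta):=(1-\eta)^m\omega_m\fmin C$ and combining with the previous display,
\[
\prob{\cM\not\subset\cU(\cP_n,r_n)}\le \frac{c_2(\eta)\,n}{\log n}\cdot n^{-\alpha(\eta)(1+o(1))}.
\]

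Third, the two parts of the proposition follow by tuning $\eta$. For part (1), since $C\omega_m\fmin>1$ by hypothesis, I can take $\eta>0$ small enough that $\alpha(\eta)>1$, and the right-hand side above tends to $0$. For part (2), the hypothesis $C\omega_m\fmin>2$ allows me to choose $\eta$ so that $\alpha(\eta)>2$; then the bound is summable in $n$, and the Borel--Cantelli lemma gives $\cM\subset\cU(\cP_n,r_n)$ for all sufficiently large $n$ almost surely.

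The main technical obstacle is the uniform small-ball volume estimate for $B_r(x)\cap\cM$, which rests on the positive reach (equivalently, positive injectivity radius) of the embedded manifold; this is standard for smooth closed submanifolds of $\R^d$, but it is the step at which the geometric hypotheses on $\cM$ are essential and at which $\omega_m$ enters. Once this is in hand, the Poisson independence makes the union bound exact and the rest is routine.
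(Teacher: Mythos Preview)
Your proposal is correct and follows essentially the same argument as the paper: both reduce to a union bound over an $\eta r_n$-net of size $O(r_n^{-m})$, use the Poisson void probability $e^{-nF(B_{(1-\eta)r_n}(x_i))}$, invoke a uniform small-ball estimate $F(B_{(1-\eta)r_n}(x))\ge (1-\eta)^m\omega_m\fmin r_n^m(1+o(1))$, and then tune $\eta$ so that the resulting power of $n$ beats $1$ (for part 1) or $2$ (for part 2, plus Borel--Cantelli). The only cosmetic difference is that the paper justifies the uniform volume bound via Lemma~\ref{lem:lim_prob_vol} and uniform continuity on $\cM\times[0,1]$, whereas you phrase it in terms of positive reach; these are the same geometric fact.
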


The second ingredient is a statement about the critical points of the distance function, unique to the coverage regime.
Let $\hr_n$ be any sequence of positive numbers such that (a) $\hr_n \to 0$, and (b) $\hr_n > r_n$ for every $n$. Define $\Nkt$ to be the number of critical points of $d_{\cP_n}$ with critical value bounded by $\hr_n$.
Obviously, $\Nkt \ge \Nk$, but we will prove that choosing $r_n$ properly, these two quantities are asymptotically equal.

\begin{prop}\label{prop:lim_glob}
	If $n\rnm \ge C \log n$, then:
	\begin{enumerate}
	\item
	If $C > (\omega_m \fmin)^{-1}$, then
	\[
		\limninf \prob{\Nk = \Nkt, \ \forall 1\le k \le m} = 1.
	\]
	\item If $C>2(\omega_m \fmin)^{-1}$, then almost surely there exists $M>0$ (possibly random), such that for $n>M$
	\[
		\Nk = \Nkt,\quad \forall 1\le k \le m.
	\]
	\end{enumerate}
\end{prop}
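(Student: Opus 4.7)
The plan is to bound the expected number of ``excess'' critical points $\E[\Nkt - \Nk]$ by a moment computation based on the Mecke/Slivnyak formula for the Poisson process, and then conclude with Markov's inequality for part 1 and Borel--Cantelli for part 2. By Lemma \ref{lem:gen_crit_point}, $\Nkt - \Nk$ counts subsets $\cY \subset \cP_n$ of size $k+1$ satisfying \ref{cp1}, \ref{cp2}, and the extra constraint $r_n < R(\cY) \le \hat r_n$. For each fixed $k \in \{1,\ldots,m\}$, Mecke's formula gives
\[
\E[\Nkt - \Nk] = \frac{n^{k+1}}{(k+1)!} \int_{\cM^{k+1}} \prod_{i=1}^{k+1} f(x_i) \cdot h^c(\vec x) \cdot \ind\{r_n < R(\vec x) \le \hat r_n\} \cdot e^{-n F(B(\vec x))}\, d\vec x.
\]

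The first key ingredient is a uniform geometric lower bound on the exponent. When $R(\vec x) \le \hat r_n \to 0$, all points of $\cY$ lie in a small patch of $\cM$ well-approximated by its tangent plane at $x_1$, and the $d$-dimensional ball $B(\cY)$ (centered at $C(\cY) \in \oconv(\cY)$) meets $\cM$ in an $m$-volume at least $(1-\eta_n)\omega_m R(\cY)^m$, with $\eta_n \to 0$ uniformly in $\vec x$ by the smoothness and compactness of $\cM$ (equivalently, by a positive lower bound on the reach). Consequently $F(B(\cY)) \ge (1-\eta_n)\omega_m \fmin R(\cY)^m$.

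The second ingredient is a rescaling: fix $x_1$, introduce normal coordinates $w_i \in T_{x_1}\cM \cong \R^m$ for $x_i$, and substitute $w_i = r_n \xi_i$. The map $R$ is homogeneous of degree one in the configuration and $h^c$ is scale invariant, so up to curvature-induced factors $1+o(1)$,
\[
\E[\Nkt - \Nk] \le C_k\, n^{k+1} r_n^{mk} \int_{(\R^m)^k} h^c(0,\vec\xi)\, \ind\{R(0,\vec\xi) > 1\}\, e^{-c n r_n^m R(0,\vec\xi)^m}\, d\vec\xi,
\]
with $c = (1-\eta_n)\omega_m \fmin$. Decomposing $\vec\xi$ by homogeneity into a radial variable $\rho = R(0,\vec\xi)$ and an angular variable on $\{R = 1\}$, the inner integral reduces to a constant times $\int_1^\infty \rho^{mk-1} e^{-cnr_n^m \rho^m}\, d\rho$. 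The substitution $u = cnr_n^m \rho^m$ turns this into an upper incomplete gamma integral, and the standard asymptotic $\Gamma(k,\alpha) \le C\alpha^{k-1} e^{-\alpha}$ for large $\alpha$ yields
\[
\E[\Nkt - \Nk] \le C_k'\, n^k r_n^{m(k-1)} e^{-cnr_n^m} \le C_k''\,(\log n)^{k-1}\, n^{1-cC},
\]
where the last step uses $nr_n^m \ge C \log n$ together with the fact that $t \mapsto t^{k-1}e^{-ct}$ is eventually decreasing.

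Since $\eta_n \to 0$, for any fixed $c_0 < \omega_m \fmin$ we have $c \ge c_0$ for $n$ large. Under the hypothesis of part 1 ($C > (\omega_m \fmin)^{-1}$) we may pick $c_0$ with $c_0 C > 1$, so $\E[\Nkt - \Nk] \to 0$; since $\Nkt - \Nk$ is a nonnegative integer, Markov's inequality followed by a union bound over $k = 1,\ldots,m$ gives $\PP(\Nk \ne \Nkt \text{ for some } k) \to 0$. Under the hypothesis of part 2 ($C > 2(\omega_m \fmin)^{-1}$) we can arrange $c_0 C > 2$, so the bound is summable in $n$ and Borel--Cantelli delivers the almost-sure statement. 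The main obstacle is the first ingredient: proving the uniform estimate $\Vol(B(\cY) \cap \cM) \ge (1-\eta_n)\omega_m R(\cY)^m$ and quantifying the curvature-induced distortion in the normal-coordinate change of variables, which requires careful control of the second fundamental form and reach of the embedding, uniformly over all small configurations $\vec x \in \cM^{k+1}$.
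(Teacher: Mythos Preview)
Your proposal is correct and follows essentially the same strategy as the paper: bound $\mean{\Nkt-\Nk}$ via the Palm/Mecke formula together with the uniform volume estimate $F(B(\cY))\ge (1-o(1))\omega_m\fmin R(\cY)^m$, then apply Markov's inequality for part~1 and Borel--Cantelli for part~2. The paper packages the moment bound as a separate lemma (Lemma~\ref{lem:mean_global}) and obtains it by rescaling with $s_n=n^{-1/m}$ and splitting the exponent as $np_c\ge \beta(\cdots)R^m(0,\by)+(1-\beta)(\cdots)n r_n^m$, whereas you rescale by $r_n$ and evaluate the radial integral directly as an incomplete gamma function; both routes give a bound of the form $n^{1-\gamma}$ with $\gamma$ arbitrarily close to $\omega_m\fmin C$, so the difference is only in bookkeeping.
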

In other words, if $r_n$ is chosen properly, then $\cU(\cP_n, r_n)$ contains all the `local' (small valued) critical points of $d_{\cP_n}$.

Combining the fact that $\cM$ is covered, the deformation retract argument in \cite{niyogi_finding_2008}, and the fact that there are no local critical points outside $\cU(\cP_n, r_n)$, using Morse theory, we have the desired statement about the Betti numbers.

\begin{thm}[Convergence of the Betti Numbers]\label{thm:lim_betti_sup}
	If $r_n \to 0$, and $n\rnm \ge C \log n$, then:
	\begin{enumerate}
	\item
	If $C > (\omega_m \fmin)^{-1}$, then
	\[
		\limninf \prob{\bk = \beta_k(\cM),\ \forall 0\le k \le m} = 1.
	\]
	\item If $C>2(\omega_m \fmin)^{-1}$, then almost surely there exists $M>0$, such that for $n>M$
	\[
		\bk = \beta_k(\cM),\quad \forall 0\le k \le m.
	\]
	Note that $M$ (the exact point of convergence) is  random.
	\end{enumerate}
\end{thm}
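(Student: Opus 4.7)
The plan is to combine three ingredients: the coverage statement in Proposition \ref{prop:lim_coverage}, the critical point statement in Proposition \ref{prop:lim_glob}, and the deformation retract result of Niyogi--Smale--Weinberger \cite{niyogi_finding_2008}, glued together by (generalized) Morse theory for the distance function $d_{\cP_n}$. The logic is: coverage and no critical points outside $\cU(\cP_n,r_n)$ will imply that $\cU(\cP_n,r_n)$ has the homotopy type of a larger tubular-type neighborhood of $\cM$, and a Niyogi et al.\!\!\! type argument will then identify that neighborhood homotopically with $\cM$ itself.

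More concretely, I would fix an auxiliary sequence $\hr_n$ with $r_n<\hr_n\to 0$ such that $\hr_n<\sqrt{3/5}\,\tau$ eventually, where $\tau$ is the reach (condition number) of $\cM$; this is possible because $r_n\to 0$. Applying Proposition \ref{prop:lim_coverage} with the appropriate constant $C$ gives, either in probability (for $C>(\omega_m\fmin)^{-1}$) or almost surely for $n$ large (for $C>2(\omega_m\fmin)^{-1}$), the coverage event $\cM\subset \cU(\cP_n,r_n)\subset \cU(\cP_n,\hr_n)$. Applying Proposition \ref{prop:lim_glob} under the same constants, one gets $\Nk=\Nkt$ for all $1\le k\le m$, i.e.\! $d_{\cP_n}$ has no critical points with critical value in $(r_n,\hr_n]$. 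The Morse theory for min-type functions \cite{gershkovich_morse_1997}, discussed in Section \ref{sec:morse}, then says that $\cU(\cP_n,r_n)$ and $\cU(\cP_n,\hr_n)$ are homotopy equivalent (since the sublevel sets only change homotopy type at critical levels), and in particular have the same Betti numbers.

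To finish, I would invoke the deformation retract theorem of \cite{niyogi_finding_2008}: if a finite set $\cP\subset\cM$ is $\eps/2$-dense in $\cM$ and $\eps<\sqrt{3/5}\,\tau$, then $\cU(\cP,\eps)$ deformation retracts onto $\cM$. With $\eps=\hr_n$, the density hypothesis follows from $\cM\subset\cU(\cP_n,r_n)\subset \cU(\cP_n,\hr_n)$ (so $\cP_n$ is $\hr_n$-dense, and in particular $\hr_n/2$-dense with a trivial rescaling of $\hr_n$ by a factor of $2$, which we can absorb into the choice of $\hr_n$ a priori). Hence $\cU(\cP_n,\hr_n)\simeq \cM$ homotopically, and combining with the Morse-theoretic equivalence $\cU(\cP_n,r_n)\simeq \cU(\cP_n,\hr_n)$ yields
\[
\beta_k(\cU(\cP_n,r_n))=\beta_k(\cM),\qquad 0\le k\le m,
\]
on the relevant probability/almost-sure event, which is precisely the two conclusions of the theorem.

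The only delicate point, and the main obstacle to watch out for, is the joint choice of the auxiliary sequence $\hr_n$: it must simultaneously satisfy $\hr_n>r_n$, $\hr_n\to 0$, $\hr_n<\sqrt{3/5}\,\tau$ (eventually), and be suitable so that Proposition \ref{prop:lim_glob} yields the equality $\Nk=\Nkt$ on the \emph{same} event as the coverage in Proposition \ref{prop:lim_coverage}. The first three are clearly compatible since $r_n\to 0$ and $\tau>0$ is fixed; the fourth requires checking that the two propositions can be applied in tandem, which is direct because both are invoked under the same hypothesis $nr_n^m\ge C\log n$ with the same constant $C$. Everything else is bookkeeping: on the intersection of the two good events (probability going to $1$ in part (1), almost-sure eventually in part (2)) the chain of homotopy equivalences $\cU(\cP_n,r_n)\simeq\cU(\cP_n,\hr_n)\simeq \cM$ is valid, proving the claim.
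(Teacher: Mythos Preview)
Your proposal is correct and follows essentially the same route as the paper: coverage (Proposition~\ref{prop:lim_coverage}) plus the deformation retract of \cite{niyogi_finding_2008} identify $\cU(\cP_n,\hr_n)$ with $\cM$, while Proposition~\ref{prop:lim_glob} and Morse theory identify $\cU(\cP_n,r_n)$ with $\cU(\cP_n,\hr_n)$. The only cosmetic difference is that the paper makes the explicit choice $\hr_n=2r_n$, which immediately turns the coverage statement $\cM\subset\cU(\cP_n,r_n)$ (i.e.\ $r_n$-density) into the $\hr_n/2$-density needed for \cite{niyogi_finding_2008}, avoiding your slightly awkward ``absorb a factor of $2$'' discussion.
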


A common problem in topological manifold learning is the following:

\noindent \emph{Given a set of random points $\cP$, sampled  from an unknown manifold $\cM$, how can one infer the topological features of $\cM$?}

\noindent
The last theorem provides a possible solution. Draw balls around $\cP$, with a radius $r$ satisfying the condition in Theorem \ref{thm:lim_betti_sup}. As the sample size grows it is guaranteed that the Betti numbers computed from the union of the balls will recover those of the original manifold $\cM$. This solution is in the spirit of the result in \cite{niyogi_finding_2008}, where a bound on the recovery probability is given as a function of the sample size and the condition number of the manifold, for a uniform measure on $\cM$. The result in \ref{thm:lim_betti_sup} applies for a larger class of probability measures on $\cM$,  require much weaker assumptions on the geometry of the manifold (the result in \cite{niyogi_finding_2008} requires the knowledge of the condition number, or the reach, of the manifold), and convergence is shown to occur almost surely.

\section{Proofs}
\label{proofs}
In this section we provide proofs for the statements in this paper. 
We note that the proofs of theorems \ref{thm:lim_mean_var_subcrit} - \ref{thm:lim_supcrit} are similar to the proofs of the equivalent statements in \cite{kahle_random_2011, kahle_limit_2010} (for the Betti numbers), and in \cite{bobrowski_distance_2011} (for the critical points). There are, however, significant differences when dealing with samples on a closed manifold. We provide detailed proofs for the limits of the first moments, demonstrating these differences, and refer the reader to \cite{bobrowski_distance_2011, kahle_random_2011, kahle_limit_2010} for the rest of the details.

\subsection{Some Notation and Elementary Considerations}
This section is devoted to prove  the results
in Section \ref{sec:results}, and is organized
according to
situations: sub-critical (dust), critical, and super-critical. In this section
we list some common notation and note some simple facts that will be used
in the proofs.

\begin{list}{\labelitemi}{\leftmargin=1em}
\item Henceforth, $k$ will be fixed, and whenever we use $\cY,\cY'$ or $\cY_i$  we implicitly assume (unless stated otherwise) that either $\abs{\cY}=\abs{\cY'} = \abs{\cY_i} = k+2$ for $k$-cycles, or $\abs{\cY}=\abs{\cY'} = \abs{\cY_i} = k+1$ for index $k$ critical points.
\item Usually, finite subsets of $\R^d$ will be denoted calligraphically ($\cX,\cY$). However inside integrals we  use boldfacing and lower case ($\bx,\by$).
\item For every $x\in\cM$ we denote by $T_x\cM$ the tangent space of $\cM$ at $x$, and define $\exp_x:T_x\cM\to \cM$ to be the exponential map at $x$. Briefly, this means that for every $v\in T_x\cM$, the point $\exp_x(v)$ is the point on the unique geodesic leaving $x$ in the direction of $v$, after traveling a geodesic distance equal to $\norm{v}$.
\item For $x\in\R^d$, $\bx \in \cM^{k+1}$ and $\by\in(\R^m)^{k}$, we use the shorthand
\begin{align*}
f(\bx) &:= f(x_1)f(x_2)\cdots f(x_{k+1}), \\
f(x,\exp_x(\bv)) &:= f(x) f(\exp_x(v_1))\cdots f(\exp_x(v_k)),\\
h(0,\by) &:= h(0,y_1,\ldots,y_k).
\end{align*}
\end{list}

Throughout the proofs we will use the following notation. Let $x\in \cM$, and let $v\in T_x\cM$ be a tangent vector. We define
\[
	\nabla_{\eps}(x,v) = \frac{\exp_x(\eps v) - x}{\eps}.
\]
By definition, it follows that
\[
	\lim_{\eps\to 0} \nabla_{\eps}(x,v) = v.
\]
The following lemmas will be useful when we will be required to approximate geodesic distances and volumes by Euclidean ones.

\begin{lem}\label{lem:geodesic_bounds}
Let $\delta > 0$.
If $\norm{\nabla_\eps(x,v)} \le C$ for all $\eps>0$, and for some $C>0$. Then there exists a small enough $\tilde{\eps} > 0$ such that for every $\eps < \tilde{\eps}$
\[
	\norm{v} \le C(1+\delta).
\]
\end{lem}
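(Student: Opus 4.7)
The plan is to exploit the fact that $\nabla_\eps(x,v) \to v$ in $\R^d$ as $\eps \to 0^+$, and to make the rate of convergence explicit using a Taylor expansion of the exponential map. Since the conclusion $\norm{v} \le C(1+\delta)$ does not actually depend on $\eps$, the role of $\tilde\eps$ is merely to guarantee that the remainder estimate below is valid.

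First I would recall the standard facts about $\exp_x\colon T_x\cM \to \cM$. Viewed as a map from a neighborhood of $0 \in T_x\cM$ into $\R^d$, it is smooth, with $\exp_x(0) = x$ and differential at $0$ equal to the inclusion $T_x\cM \hookrightarrow \R^d$. Applying Taylor's theorem to this $\R^d$-valued smooth map in the direction $v$ gives, for all sufficiently small $\eps>0$,
\[
	\exp_x(\eps v) \;=\; x \;+\; \eps\, v \;+\; \eps^2 \, r(x,v,\eps),
\]
where $\norm{r(x,v,\eps)} \le K$ for some constant $K = K(x,v,\cM)$. Dividing by $\eps$ and using the definition of $\nabla_\eps$ yields
\[
	\nabla_\eps(x,v) \;=\; v \;+\; \eps\, r(x,v,\eps).
\]

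Next I would apply the triangle inequality to rearrange this identity and bound $\norm{v}$. Combined with the hypothesis $\norm{\nabla_\eps(x,v)} \le C$, this gives
\[
	\norm{v} \;\le\; \norm{\nabla_\eps(x,v)} \;+\; \eps \, \norm{r(x,v,\eps)} \;\le\; C + \eps K,
\]
valid for all $\eps$ small enough that the Taylor remainder bound holds. Given $\delta > 0$, I would then choose $\tilde\eps > 0$ small enough to satisfy both this Taylor regime and the additional constraint $\tilde\eps\, K \le \delta\, C$. For any $\eps < \tilde\eps$ we then obtain $\norm{v} \le C + \delta C = C(1+\delta)$, as required.

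There is no serious obstacle here; the only thing to be careful about is the Taylor expansion of $\exp_x$ viewed as an $\R^d$-valued map (so that $\nabla_\eps$ makes sense as a Euclidean difference quotient rather than a geodesic object), and in particular the fact that its first-order term is exactly the inclusion of $v$ into $\R^d$. This is immediate from the defining property of the exponential map and the compactness/smoothness of $\cM$, so the bound $K$ on the remainder exists automatically for the fixed pair $(x,v)$; no uniformity in $x$ or $v$ is claimed or needed.
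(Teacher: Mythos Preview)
Your argument is correct. You exploit directly that $\nabla_\eps(x,v)\to v$ via a Taylor expansion of $t\mapsto \exp_x(tv)$ viewed as an $\R^d$-valued curve, obtaining $\norm{v}\le \norm{\nabla_\eps(x,v)}+\eps K$ and then choosing $\eps$ small.

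The paper takes a different route: it invokes an external estimate (Theorem~5 in \cite{memoli2005distance}) comparing geodesic and Euclidean distances inside a tube around $\cM$. From $\norm{\nabla_\eps(x,v)}\le C$ it deduces that the segment from $x$ to $\exp_x(\eps v)$ lies in the $(C\eps)$-tube of $\cM$, and the cited result then gives
\[
\frac{\norm{\eps v}}{\norm{x-\exp_x(\eps v)}}\;\le\;1+C'\sqrt{\eps},
\]
hence $\norm{v}\le (1+C'\sqrt{\eps})\norm{\nabla_\eps(x,v)}\le C(1+\delta)$ for small $\eps$. The advantage of the paper's route is that the constant $C'$ depends only on the ambient geometry of $\cM$ (reach, curvature) and on $C$, so the threshold $\tilde\eps$ is automatically uniform in $x$ and $v$; this uniformity is what is actually used later when bounding integrands by restricting the domain of the $y_i$. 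Your Taylor remainder bound $K=K(x,v)$ is pointwise, which suffices for the lemma exactly as stated but would need an additional compactness/boundedness argument to recover the uniform version used downstream. Conversely, your approach is entirely self-contained and avoids the external citation.
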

\begin{proof}
If $\norm{\nabla_\eps(x,v)} \le C$, then the $(C\eps)$-tube around $\cM$, contains the line segment connecting $x$ and $\exp_x(\eps v)$. Therefore, using Theorem 5 in \cite{memoli2005distance} we have that
\[
	\frac{\norm{\eps v}}{\norm{x-\exp_x(\eps v)}} \le 1 + C'\sqrt{\eps}.
\]
This implies that
\[
	\norm{v} \le (1+C'\sqrt{\eps}) \norm{\nabla_\eps(x,v)},
\]
for some $C'>0$.
Therefore, if $\eps$ is small enough we have that
\[
	\norm{v} \le C(1+\delta),
\]
which completes the proof.
\end{proof}

Throughout the proofs we will repeatedly use two different occupancy probabilities, defined as follows,
\begin{align}
	p_b(\cY, \eps) &:= \int_{\cU(\cY,\eps)\cap \cM} f(\xi)d\xi  \label{eq:def_pb}\\
	p_c(\cY) &:= \int_{B(\cY)\cap \cM} f(\xi)d\xi, \label{eq:def_pc}
\end{align}
where $B(\cY)$ is defined in \eqref{eq:def_B}.
The next lemma is a version of Lebesgue differentiation theorem, which we will be using.
\begin{lem}\label{lem:lim_prob_vol}
For every $x\in\cM$ and $\by \in (T_x(\cM))^k$, if $r_n\to 0 $, then
\begin{enumerate}
\item
\[
	\limninf \frac{p_b((x,\exp_x(r_n\by)), r_n)}{r_n^m V(0,\by)} = f(x),
\]
where $V(\cY) = \vol(\cU(\cY,1))$.
\item	\[
		\limninf \frac{p_c(x,\exp_x(r_n\by))}{\rnm\omega_m R^m(0,\by)} = f(x),
	\]
	where $\omega_m$ is the volume of a unit ball in $\R^m$.
\end{enumerate}

\end{lem}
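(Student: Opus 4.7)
The plan is to change variables via the exponential map $\exp_x: T_x\cM \to \cM$, reducing both statements to integrals over regions in $T_x\cM \cong \R^m$ that, after rescaling by $r_n^{-1}$, converge to fixed regions depending only on $\by$. Since $f$ is only assumed bounded and measurable, I would invoke Lebesgue's differentiation theorem and work at a Lebesgue point $x$ of $f$ (this is enough for the way the lemma is used inside integrals). Throughout, I would use Lemma \ref{lem:geodesic_bounds} to interchange geodesic and Euclidean distances to leading order, and the fact that the Jacobian of $\exp_x$ at $0$ is the identity, so that $d(\exp_x)_{r_n v} = I + O(r_n)$ uniformly on compacts.

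For part 1, substitute $\xi = \exp_x(r_n v)$ with $v \in T_x\cM$, giving $d\xi = r_n^m J_n(v)\, dv$ where $J_n(v) \to 1$ as $r_n \to 0$ uniformly on compact sets of $v$. The integration region pulls back to
\[
\Omega_n := \{ v \in T_x\cM : \exp_x(r_n v) \in \cU((x,\exp_x(r_n \by)), r_n) \}.
\]
Using that $r_n^{-1}\|\exp_x(r_n v) - \exp_x(r_n y_i)\| \to \|v - y_i\|$ and $r_n^{-1}\|\exp_x(r_n v) - x\| \to \|v\|$ (from Lemma \ref{lem:geodesic_bounds} applied in both directions), one shows $\mathbf{1}_{\Omega_n}(v) \to \mathbf{1}_{\cU((0,\by),1)}(v)$ almost everywhere in $v$, with a common compactly supported dominating function. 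Dominated convergence then gives
\[
r_n^{-m} p_b((x,\exp_x(r_n\by)), r_n) = \int_{\Omega_n} f(\exp_x(r_n v)) J_n(v)\, dv \;\longrightarrow\; f(x) \cdot V(0,\by),
\]
where the convergence of the $f$-factor uses Lebesgue differentiation to replace $f(\exp_x(r_n v))$ by $f(x)$ in the average.

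For part 2, the same exponential-map substitution is used, but the region is now $B((x, \exp_x(r_n\by))) \cap \cM$. The key geometric step is to show that the radius and center of the circumscribed sphere rescale properly: since $(R, C)$ depend continuously on a point configuration in general position, and the configuration $r_n^{-1}(\exp_x(r_n y_i) - x)$ converges to $y_i$ in $T_x\cM$, we get $r_n^{-1} R((x, \exp_x(r_n\by))) \to R(0,\by)$ and $r_n^{-1}(C((x, \exp_x(r_n\by))) - x) \to C(0,\by) \in T_x\cM$. The pulled-back region $r_n^{-1} \exp_x^{-1}(B \cap \cM)$ then converges to $B^m(C(0,\by), R(0,\by)) \subset T_x\cM$, an $m$-ball of volume $\omega_m R^m(0,\by)$, and another application of dominated convergence combined with Lebesgue differentiation gives the claim.

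The main obstacle will be step 2: although the Euclidean ball $B$ in $\R^d$ has its center off $\cM$, one must show its intersection with $\cM$ has $m$-volume of the correct leading order. This requires controlling the deviation of $\cM$ from its tangent plane at scale $r_n$ (quadratic by the second fundamental form), and verifying that $\exp_x^{-1}(B \cap \cM)$ is trapped between two $m$-balls of radii $r_n R(0,\by)(1 \pm o(1))$. A cleaner route is to transfer everything via $\exp_x$ so that the ``ball'' becomes the preimage of $B$ in $T_x\cM$, and then observe that this preimage is sandwiched between concentric Euclidean $m$-balls whose radii agree to first order, again by Lemma \ref{lem:geodesic_bounds}.
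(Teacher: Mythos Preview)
Your approach is essentially the paper's: pull back via $\exp_x$, rescale by $r_n$, show pointwise convergence of the indicator and Jacobian, and apply dominated convergence with the support bounded via Lemma~\ref{lem:geodesic_bounds}. The one tactical difference worth noting concerns your ``main obstacle'' in part~2. The paper sidesteps the off-manifold-center issue entirely by observing that
\[
\ind\{\exp_x(r_n v)\in B_n\} \;=\; H\bigl(\nabla_{r_n}(x,v),\, \nabla_{r_n}(x,\by)\bigr),
\]
where $H(v,\by):=\ind\{v\in B(0,\by)\}$ is an almost-everywhere continuous function on $(\R^d)^{k+1}$; since $\nabla_{r_n}(x,v)\to v$ and $\nabla_{r_n}(x,\by)\to\by$ in $\R^d$, the indicator converges directly without ever needing to analyze how $B_n$ intersects $\cM$ or to sandwich between $m$-balls. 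This is cleaner than tracking $R$ and $C$ separately. On the other hand, your invocation of Lebesgue differentiation at a Lebesgue point of $f$ is arguably more careful than the paper, which simply writes $f(\exp_x(r_n v))\to f(x)$ as if $f$ were continuous; since the lemma is only used inside integrals over $x\in\cM$, working at Lebesgue points suffices, as you note.
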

\begin{proof}
We start with the proof for $p_c$. Set $B_n := B(x,\exp_x(r_n \by))\subset \R^d$. Then
\[
p_c(x,\exp_x(r_n \by)) =  \int\limits_{B_n \cap \cM} f(\xi)d\xi.
\]
Next, use the change of variables $\xi \to \exp_x(r_n v)$, for $v\in T_x\cM\simeq \R^m$.
Then,
\begin{equation}\label{eq:p_int}
p_c(x,\exp_x(r_n \by)) = r_n^m \int_{\Rm} f(\exp_x(r_n v)) \ind\set{\exp_x(r_n v) \in B_n} J_x(r_n v) dv,
\end{equation}
where $J_x(v) = \frac{\partial \exp_x}{\partial v}$.

We would like to apply the Dominated Convergence Theorem (DCT) to this integral, to find its limit. First, assuming that the DCT condition holds, we find the limit.
\begin{itemize}
\item By definition, $\exp_x(r_n v) \to x$, and therefore,
\[
	\limninf f(\exp_x(r_nv)) = f(x).
\]
\item Note that the function $H(v,\by) := \ind\set{v\in B(0,\by))}$ is almost everywhere continuous in $\R^d\times(\R^d)^k$, and also that
\[
	\ind\set{\exp_x(r_n v)\in B_n} = H(\nabla{r_n}(x,v), \nabla{r_n}(x,\by)).
\]
Since $\nabla{r_n}(x,v) \to v$, and $\nabla{r_n}(x,\by) \to \by$ (when $n\to\infty$), we have that for almost every $v,\by$,
\[
	\limninf\ind\set{\exp_x(r_n v)\in B_n} = H(v, \by) = \ind\set{v\in B(0,\by)}.
\]
\item By definition,
\[
	\limninf J_x(r_nv) = 1.
\]
\end{itemize}
Putting it all together, we have that
\[
	\limninf r_n^{-m}p_c(x,\exp_x(r_n\by)) = f(x) \vol_m(B(0,\by)) = f(x) \omega_m R^m(0,\by),
\]
which is the limit we are seeking.

To conclude the proof we have to show that the DCT condition holds for the integrand in \eqref{eq:p_int}. For a fixed $\by$, for every $v$ for which the integrand is nonzero, we have that $\exp_x(r_nv) \in B_n$ which implies that
\[
	\norm{\nabla_{r_n}(x,v)} \le 2R(0,\nabla_{r_n}(x,\by)).
\]
Since $R(0,\nabrn(x,\by)) \to R(0,\by)$, we have that $n$ for large enough
\[
	\norm{\nabrn(x,v)} \le 3R(0,\by),
\]
Using Lemma \ref{lem:geodesic_bounds} we then have that
\[
	\norm{v} \le 3(1+\delta) R(0,\by),
\]
for some $\delta>0$.
This means that the support of the integrand in \eqref{eq:p_int} is bounded. Since $f$ is bounded, and $J_x$ is continuous, we deduce that the integrand is well bounded, and we can safely apply the DCT to it.

The proof for $p_b$ follows the same line of arguments, replacing $B_n$ with   $$U_n := \cU((x,\exp_x(r_n\by)),r_n).$$ To bound the integrand we use the fact that if $\exp_x(r_n v) \in U_n$, then
\[
	\norm{\nabrn(x,v)} \le \diam(\cU(0, \nabrn(x,\by), 1)),
\]
and as $n\to \infty$, we have $\diam(\cU(0, \nabrn(x,\by), 1))\to \diam(\cU(0,\by),1)$.
\end{proof}

In \cite{bobrowski_distance_2011,kahle_random_2011,kahle_limit_2010} full proofs are presented for statements similar to those in this paper, only for sampling in Euclidean spaces rather than compact manifolds.
The general method of proving statements on compact manifold is quite similar, but important adjustments are required. We are going to present those adjustments for proving the basic claims, and refer the reader to the proofs in \cite{bobrowski_distance_2011,kahle_random_2011,kahle_limit_2010} taking into consideration the necessary adjustments.

\subsection{The Sub-Critical Range ($n\rnm \to 0$)}

\begin{proof}[Proof of Theorem \ref{thm:lim_mean_var_subcrit}]
We give a full proof for the limit expectations for both the Betti numbers and critical points, and then discuss the limit of the variances.

\noindent\textbf{The expected number of critical points:}\\
From the definition of $\Nk$ (see \eqref{eq:def_nk}), using the fact that index-$k$ critical points are generated by subsets of size $k+1$ (see Definition \ref{def:crit_pts}),  we can compute $\Nk$ by iterating over all possible subsets of $\cP_n$ of size $k+1$ in the following way,	
\[
	\Nk = \sum_{\cY \subset \cP_n} \grn^c(\cY, \cP_n),
\]
where $g_\eps$ is defined in \eqref{eq:def_g_eps}. Using Palm theory (Theorem \ref{thm:palm}), we have that
\begin{equation}\label{eq:mean_nk_step1}
	\mean{\Nk} = \frac{n^{k+1}}{(k+1)!} \mean{\grn^c(\cY', \cY' \cup \cP_n)},
\end{equation}
where $\cY'$ is a set of $\iid$ random variables, with density $f$, independent of $\cP_n$. Using the definition of $\grn$, we have that
\[
\mean{\grn^c(\cY', \cY'\cup \cP_n)} = \mean{\cmean{\grn^c(\cY', \cY'\cup \cP_n)}{\cY'}} = \mean{\hrn^c(\cY')e^{-np_c(\cY')}},
\]
where $p_c$ is defined in \eqref{eq:def_pc}.
Thus,
\[
	\mean{\grn^c(\cY', \cY'\cup\cP_n)} = \int_{\cM^{k+1}} f(\bx) \hrn^c(\bx) e^{-np_c(\bx)} d\bx.
\]
To evaluate this integral, recall that $\bx = (x_0, \ldots, x_k) \in \cM^{k+1}$ and  use the following change of variables
\[
    x_0 \to x \in \cM, \qquad x_i \to \exp_x(v_i),\ v_i \in T_x\cM \simeq \R^m,
\]
then,
\begin{align*}
&\mean{\grn^c(\cY', \cY'\cup \cP_n)} \\
&\quad= \int_{\cM} \int_{(T_x\cM)^k}f(x,\exp_x(\bv)) h_{r_n}^c(x,\exp_x(\bv))  e^{-np_c(x,\exp_x(\bv))} J_x(\bv) d\bv dx.
\end{align*}
where $\bv = (v_1,\ldots, v_k)$,  $\exp_x(\bv) = (\exp_x(v_1),\ldots, \exp_x(v_k))$, and $J_x(v) = \frac{\partial{\exp_x}}{\partial{v}}$.
From now on we will think of $v_i$ as vectors in $\R^m$. Thus, the change of variables $v_i \to r_n y_i$ yields,
\begin{equation}\label{eq:mean_nk_step2}
\begin{split}
&\mean{\grn^c(\cY', \cY'\cup \cP_n)} \\
&\quad = r_n^{mk}\int_{\cM} \int_{(\Rm)^k}f(x,\exp_x(r_n\by)) h_{r_n}^c(x,\exp_x(r_n\by)) e^{-np_c(x,\exp_x(r_n \by))} J_x(r_n\by) d\by dx.
\end{split}
\end{equation}
The integrand above admits the DCT conditions, and therefore we can take a point-wise limit. We compute the limit now, and postpone showing that the integrand is bounded to the end of the proof.

Taking the limit term by term, we have that:
\begin{itemize}
\item $f$ is continuous almost everywhere in $\cM$, therefore
\[
\limninf f(\exp_x(r_n y_i)) = f(x)
\]
for almost every $x\in \cM$.

\item The discontinuities of the function $h_1^c:(\Rd)^{k+1} \to \set{0,1}$ are either subsets $\bx$ for which $C(\bx)$ is on the boundary of $\conv(\bx)$, or where $R(\bx) = 1$. This entire set has a Lebesgue measure zero in $(\R^d)^{k+1}$. Therefore, we have
\[
\limninf \hrn^c(x, \exp_x(r_n \by)) =  \limninf h_1^c(0, \nabla_{r_n}(x, \by)) = h_1(0,\by),
\]
for almost every $x,\by$.

\item Using Lemma \ref{lem:lim_prob_vol}, and the fact that $n\rnm\to 0$, we have that
\[
	\limninf e^{-np_c(x,\exp_x(r_n \by))} = 1.
\]

\item Finally, $\limninf J_x(r_n y_i) = J_x(0) = 1$.

\end{itemize}

Putting all the pieces together (rolling back to \eqref{eq:mean_nk_step1} and \eqref{eq:mean_nk_step2}), we have that
\[
	\limninf (\factorm)^{-1} \mean{\Nk} = \mu_k^c.
\]

Finally, to justify the use of the DCT, we need to find an integrable bound for the integrand in \eqref{eq:mean_nk_step2}.

The main step would be to show that the integration over $(y_1,\ldots, y_k)$ is done over a bounded region in $(\Rm)^k$.
First, note that if $\hrn^c(x,\exp_x(r_n\by)) = h_1^c(0,\nabrn(x,\by)) = 1$, then necessarily  $R(0, \nabrn(x,\by)) < 1$. This implies that $\norm{\nabrn(x, y_i)} <  2$.
Using Lemma \ref{lem:geodesic_bounds}, and the fact that $r_n\to 0$, we can choose $n$ large enough so that $\norm{y_i} < 3$ for every $i$. In other words, we can assume that the integration $dy_i$ is over $B_3(0)\subset \R^m$ only.

Next, we will bound each of the terms in the integrand in \eqref{eq:mean_nk_step2}.
\begin{itemize}
\item The density function $f$ is bounded, therefore,
\[
	f(x,\exp_x(r_n\by)) = f(x)f(\exp_x(r_n \by)) \le f(x) \fmax^k,
\]
where $\fmax := \sup_{x\in \cM} f(x)$.
\item The term $\hrn^c(x,\exp_x(r_n\by)) e^{-np_c(x,\exp_x(r_n\by))}$ is bounded from above by $1$.

\item The function $J_x(v)$ is continuous in $x,v$. Therefore, it is bounded in the compact subspace $\cM\times B_3(0)$, by some constant $C$. Since we know that $y_i\in B_3(0)$, then for $n$ large enough (such that $r_n<1$ we have that $J_x(r_n\by) \le C^k$.
\end{itemize}

Putting it all together, we have that the integrand in \eqref{eq:mean_nk_step2} is bounded by $f(x)\times \mathrm{const}$, and since we proved that the $y_i$-s are bounded, we are done.
\\

\noindent\textbf{The expected Betti numbers:}\\
As mentioned in Section \ref{sec:cech}, most of the results for $\bk$ will be proved using the \cech complex $\CC(\cP_n,r_n)$ rather than the union $\cU(\cP_n,r_n)$. From the Nerve theorem, the Betti numbers of these spaces are equal.

 The smallest simplicial complex forming a non-trivial $k$-cycle is the boundary of a $(k+1)$-simplex which consists of $k+2$ vertices.
Recall that for $\cY\in (\Rd)^{k+2}$, $h_\eps^b(\cY)$ is an indicator function testing whether $\CC(\cY, \eps)$  forms a non-trivial $k$-cycle (see \eqref{eq:def_hb}), and define
\[
 g_\eps^b(\cY,\cP) := h_\eps^b(\cY)\ind\set{\CC(\cY,\eps) \textrm{ is a connected component of } \CC(\cP,\eps)}.
\]
Then iterating over all possible subsets $\cY$ of size $k+2$ we have that
\begin{equation}\label{eq:def_sk}
\Sk := \sum_{\cY \subset \cP_n} \grn^b (\cY, \cP_n),
\end{equation}
is the number of minimal isolated cycles in $\CC(\cP_n,r_n)$. Next, define $\Fk$ to be the number of $k$ dimensional faces in $\CC(\cP_n, r_n)$ that belong to a component with at least $k+3$ vertices. Then
\begin{equation}\label{eq:ineq_bk}
	\Sk \le \bk \le \Sk + \Fk.
\end{equation}
This stems from three main facts:
\begin{enumerate}
\item Every cycle which is not accounted for by $\Sk$ belongs to a components with at least $k+3$ vertices.

\item If $C_1, C_2,\cdots, C_m$ are the different connected components of a space $X$, then
\[
\beta_k(X) = \sum_{i=1}^m\beta_k(C_i).
\]
\item For every simplicial complex $C$ it is true that $\beta_k(C) \le F_k(C)$, where $F_k$ is the number of $k$-dimensional simplices.
\end{enumerate}
For more details regarding the inequality in \eqref{eq:ineq_bk}, see the proof of the analogous theorem in \cite{kahle_limit_2010}.

Next, we should find the limits of $\Sk$  and $\Fk$. For $\Sk$, from \eqref{eq:def_sk} using Palm theory (Theorem \ref{thm:palm}) we have that
\[
	\mean{\Sk} = \frac{n^{k+2}}{(k+2)!} \mean{\grn^b(\cY', \cY'\cup \cP_n)},
\]
where $\cY'$ is a set of $k+2$ $\iid$ random variables with density $f$, independent of $\cP_n$. Using the definition of $\grn^b$ we have that
\[
\mean{\grn^b(\cY', \cY'\cup\cP_n)} = \mean{\cmean{\grn^b(\cY',\cY'\cup\cP_n)}{\cY}} = \mean{\hrn^b(\cY')e^{-np_b(\cY', 2r_n)}},
\]
where $p_b$ is defined in \eqref{eq:def_pb}.
Following the same steps as in the proof for the number of critical points, leads to
\[
	\limninf (\bfactorm)^{-1} \mean{\Sk} = \mu_k^b.
\]
Thus, to complete the proof we need to show that
$(\bfactorm)^{-1} \mean{\Fk}  \to 0$. To do that, we consider sets $\cY$ of $k+3$ vertices, and define
\[
h_\eps^f (\cY) := \ind\set{\CC(\cY, \eps) \textrm{ is connected and contains a $k$-simplex}}.
\]
Then,
\[
	\Fk \le \binom{k+3}{k+1} \sum_{\cY\subset \cP_n}\hrn^f(\cY).
\]
Using Palm Theory, we have that
\[
	\mean{\Fk} \le \frac{n^{k+3}}{2(k+1)!} \mean{\hrn^f(\cY)}.
\]
Since $\hrn^f$ requires that $\CC(\cY,r_n)$ is connected, similar localizing arguments to the ones used previously in this proof show that
\[
	\limninf (n^{k+3} r_n^{m(k+2)})^{-1}\mean{\Fk} < \infty.
\]
Thus, since $n\rnm \to 0$, we have that
\[
	\limninf (n^{k+2} r_n^{m(k+1)})^{-1}\mean{\Fk} =0,
\]
which completes the proof.

For $\beta_{0,n}$, using Morse theory we have that $N_{0,n}-N_{1,n} \le \beta_{0,n} \le N_{0,n}$. Since $\mean{N_{0,n}} = n$, and $n^{-1}\mean{N_{1,n}} \to 0$, we have that
$\limninf n^{-1}\mean{\beta_{0,n}} = 1$.
\\

\noindent\textbf{The limit variance:}\\
To prove the limit variance result, the computations are similar to the ones in \cite{bobrowski_distance_2011,kahle_limit_2010}. The only adjustment required is to change the domain of integration to be $\cM$ instead of $\Rd$, the same way we did in proving the limit expectations. We refer the reader to Appendix \ref{sec:apndx_moments}  for an outline of these proofs.
\end{proof}

\begin{proof}[Proof of Theorem \ref{thm:dist_subcrit}]
We start with the case when $\factorm \to 0$.
In this case, the $L^2$ convergence is a direct result of the fact that
\[
	\limninf\mean{\Nk} = \limninf\var{\Nk} =0.
\]
Next, observe that
\[
	\prob{\Nk > 0} \le \mean{\Nk},
\]
and since $(\factorm)^{-1}\mean{\Nk}\to 0$, there exists a constant $C$ such that
\[
	\prob{\Nk >0} \le C \factorm.
\]
Thus, if $\sum_{n=1}^\infty \factorm < \infty$, we can use the Borel-Cantelli Lemma, to conclude that a.s.\! there exists $M>0$ such that for every $n>M$ we have $\Nk = 0$. This completes the proof for the first case.

For the other cases, we refer the reader to \cite{bobrowski_distance_2011,kahle_limit_2010}. The proofs in these papers use Stein's method (see Appendix \ref{sec:apndx_stein}), and mostly rely on moments evaluation (up to the forth moment). We observed in the previous proof that moment computation in the manifold case is essentially the same as in the Euclidean case, and therefore all that is needed are a few minor adjustments.
\end{proof}

\subsection{The Critical Range ($n\rnm\to \lambda$)}
We prove the result for the number of critical points first.

\begin{proof}[Proof of Theorem \ref{thm:lim_crit}]
For the critical phase, we start the same way as in the proof of Theorem \ref{thm:lim_supcrit}. All the steps and bounds are exactly the same, the only difference is in the limit of the exponential term inside the integral in \eqref{eq:mean_nk_step2}. Using Lemma \eqref{lem:lim_prob_vol}, and the fact that $n\rnm \to \lambda$ we conclude that,
\[
\limninf e^{-np_c(x,\exp_x(r_n\by))} = e^{-\lambda \omega_m R^m(0,\by)f(x)}.
\]
Thus, we have
\begin{align*}
&\limninf(\factorm)^{-1} \mean{\Nk} \\
&\quad=
\frac{1}{(k+1)!}  \int_{\cM}\int_{(\Rm)^{k}}f^{k+1}(x) h^c_1(0,\by) e^{-\lambda \omega_m R^m(0,\by)f(x)}d\by dx,
\end{align*}
and using the fact that $\factorm \sim n\lambda^k$ completes the proof.

For the proofs for the variance and the CLT we refer the reader to Appendix \ref{sec:apndx_moments} and \cite{bobrowski_distance_2011}.
\end{proof}

\begin{proof}[Proof of Theorem \ref{thm:bounds_betti_crit}]
From the proof of Theorem  \ref{thm:lim_mean_var_subcrit} we know that
\[
	\Sk \le \bk \le \Sk + \Fk.
\]
Similar methods to the ones we used above, can be used to show that
\begin{align*}
	&\limninf{(\bfactorm)^{-1}} \mean{\Sk} \\
	&\quad= \frac{1}{(k+2)!} \int_{\cM}\int_{(\R^m)^{k+1}} f^{k+2}(x) h^b_1(0,\by) e^{-\lambda 2^m V(0,\by)f(x)}d\by dx,
\end{align*}
where $V(\cY) = \vol(\cU(\cY,1))$ (see Lemma \ref{lem:lim_prob_vol}), and also that
\[
	\limninf{(n^{k+3} r_n^{m(k+2)})^{-1}} \mean{\Fk} < \infty.
\]
Since $n\rnm \to \lambda$, we have that $\bfactorm \sim n \lambda^{k+1}$. Thus we have shown that
\[
	A n \le \mean{\bk} \le B n,
\]
for some positive constants $A,B$, which completes the proof.
\end{proof}

\subsection{The Super-Critical Range ($n\rnm\to \infty$)}
\begin{proof}[Proof of Theorem \ref{thm:lim_supcrit}]
For the super-critical regime, we repeat the steps we took in the other phases, with the main difference being that instead of using the change of variables $x_i \to \exp_x(r_ny_i)$, we now use $x_i \to \exp_x(s_n y_i)$ where $s_n = n^{-1/m}$.
Thus, instead of the formula in \eqref{eq:mean_nk_step2} we now have
\begin{equation}\label{eq:mean_nk_crit}
\begin{split}
&\mean{\hrn(\cY)e^{-n p_c(\cY)}} =\\
&\qquad n^{-k}\int_{\cM} \int_{(\Rm)^k}f(x,\exp_x(s_n\by)) h^c_{r_n}(x,\exp_x(s_n\by)) e^{-np_c(x,\exp_x(s_n \by))} J_x(s_n\by) d\by dx.
\end{split}
\end{equation}
As we did before, we wish to apply the DCT to the integral in \eqref{eq:mean_nk_crit}. We will compute the limit first, and show that the integrand is bounded at the end.
\begin{itemize}
\item As before we have
\[
\limninf f(x,\exp_x(s_n \by)) = f^{k+1}(x).
\]
\item The limit of the indicator function is now a bit different.
\begin{align*}
	\hrn^c(x,\exp_x(s_n \by)) &= h^c_1 (0, r_n^{-1} s_n  \nabsn(x,\by))
	\\
	&= h^c(0, \nabsn(x,\by)) \ind\set{r_n^{-1} s_n R(0,\nabsn(x,\by)) < 1} .
\end{align*}
Now, since $R(0,\nabsn(x,\by)) \to R(0,\by)$ and $r_n^{-1}s_n \to 0$, we have that
\[
	\limninf \hrn^c(x,\exp_x(s_n \by)) = h^c(0, \by).
\]

\item Using Lemma \ref{lem:lim_prob_vol} we have that
\[
\limninf \frac{p_c(x,\exp_x(s_n\by))}{s_n^m \omega_m R^m(0,\by)} = f(x).
\]
This implies that
\[
	\limninf e^{-np_c(x,\exp_x(s_n \by))} = e^{-\omega_m R^m(0, \by) f(x)}.
\]
\end{itemize}
These computations yield,
\[
	\limninf n^{-1} \mean{\Nk} = \frac{1}{(k+1)!} \int_{\cM}\int_{(\Rm)^k} f^{k+1}(x) h^c(0,\by) e^{-\omega_m R^m(0, \by) f(x)} d\by dx.
\]

Finally, for the inner integral, use the following change of variables - $y_i \to (f(x))^{-1/m} v_i$, so that $d\by = f^{-k}(x)d\bv$. This yields,
\[
	\limninf n^{-1}\mean{\Nk} = \frac{1}{(k+1)!} \int_{\cM}\int_{(\Rm)^k} f(x) h^c(0,\bv) e^{-\omega_m R^m(0,\bv)} d\bv dx.
\]
Using the fact that $\int_{\cM}f(x)dx = 1$ completes the proof.

It remains to show that the DCT condition applies to the integral in \eqref{eq:mean_nk_crit}. The main difficulty in this case stems from the fact that the variables $y_i$ are no longer bounded. Nevertheless, we can still bound the integrand, taking advantage of the exponential term.
\begin{itemize}
\item As before, we have $f(x,\exp_x(s_n,\by)) \le f(x) \fmax^k$.

\item Being an indicator function, it is obvious that $\hrn^c(x,\exp_x(s_n\by)) \le 1$.
\item To bound the exponential term from above, we will find a lower bound to $p_c(x,\exp_x(s_n\by))$.
Define a function $G:\cM\times (\R^m)^k \times [0,1]\to \R$ as follows,
\[
	G(x, \bv, \rho) = \begin{cases}
		\frac{p_c(x, \exp_x(\rho \bv))}{\omega_m R^m(0, \rho \bv) f(x)}& \rho > 0, \\
		1 & \rho = 0.
	\end{cases}
\]
From Lemma \ref{lem:lim_prob_vol} we know that $G$ is continuous in the compact subspace $\cM \times (B_3(0))^k \times [0,1]$, and thus uniformly continuous. Therefore, for every $\alpha > 0, x\in \cM,\bv\in (B_3(0))^k$, there exists $\tilde{\rho} > 0 $ such that for every $\rho < \tilde{\rho}$ we have
\[
	G(x, \bv, \rho) \ge 1-\alpha.
\]
Now, consider $\bv = \frac{s_n}{r_n} \by$, then as we proved in the sub-critical phase, $\bv \in (B_3(0))^k$. Thus, for $n$ large enough (such that $r_n < \tilde{\rho}$), we have that for every $x,\by$
\[
 \frac{p_c(x,\exp_x(s_n \by))}{\omega_m R^m(0,s_n\by) f(x)} \ge 1-\alpha,
\]
which implies that
\[
 p_c(x,\exp_x(s_n \by)) \ge (1-\alpha)n^{-1} \omega_m R^m(0,\by) f(x).
\]
Therefore, we have
\begin{equation}\label{eq:p_lower_bound_super}
	e^{-np_c(x,\exp_x(s_n\by))} \le e^{-(1-\alpha)\omega_m R^m(0,\by) \fmin}.
\end{equation}
Finally, note that $R(0,\by) \ge \norm{y_i}/2$ for every $i$. Thus,
\[
	R^m(0, \by) \ge \frac{1}{2^m k}\sum_{i=1}^k \norm{y_i}^m.
\]
\end{itemize}
Overall, we have that the integrand in \eqref{eq:mean_nk_crit} is bounded by
\[
	\fmax^k f(x) e^{-\frac{(1-\alpha)\omega_m \fmin}{2^m k} \sum_{i=1}^k \norm{y_i}^m}.
\]
This function is integrable in $\cM\times(\Rm)^k$, and therefore we are done.
For the proof of the limit variance and CLT, see Appendix \ref{sec:apndx_moments} and \cite{bobrowski_distance_2011}.
\end{proof}

\begin{proof}[Proof of Proposition \ref{prop:lim_coverage}]

Since $\cM$ is $m$-dimensional, it can be shown that there exists $D>0$ such that for every $\eps$ we can find a (deterministic) set of points $\cS\subset \cM$ such that (a) $\cM \subset \cU(\cS, \eps)$, i.e.\! $\cS$ is $\eps$-dense  in $\cM$, and (b) $\abs{\cS} \le D \eps^{-m}$ (cf. \cite{flatto1977random}).

If $\cM$ is not covered by $\cU(\cP_n, r_n)$, then there exists $x\in\cM$, such that $\norm{x-X} > r_n$ for every $X\in\cP_n$.
For $\alpha >0$, let $\cS_n$ be a $( \alpha r_n)$-dense set in $\cM$, and let $s \in \cS_n$ be the closest point to $x$ in $\cS_n$. Then,
\[
\norm{x-X} \le \norm{x-s} + \norm{s-X}.
\]
Since $\norm{x-s} \le \alpha r_n$, then necessarily $\norm{s-X} > (1-\alpha)r_n$. Thus,
\begin{align*}
\prob{\cM \not\subset \cU(\cP_n, r_n)} \le \sum_{s\in \cS_n} \prob{B_{(1-\alpha)r_n}(s)\cap\cP_n = \emptyset} = \sum_{s\in\cS_n} e^{-nF(B_{(1-\alpha)r_n}(s))},
\end{align*}
where
\[
	F(B_{(1-\alpha)r_n}(s)) = \int_{B_{(1-\alpha)r_n}(s)\cap \cM} f(x)dx.
\]
Similarly to Lemma \ref{lem:lim_prob_vol} we can show that for every $x\in\cM$
\[
	\limninf \frac{F(B_{(1-\alpha)r_n}(x))}{\omega_m(1-\alpha)^mr_n^m} = f(x).
\]
Denoting
\[
G(x,\rho) = \begin{cases} \frac{F(B_{(1-\alpha)\rho})}{\omega_m(1-\alpha)^m\rho f(x)} & \rho > 0,\\
1 & \rho = 0, \end{cases}
\]
then $G:\cM\times[0,1]\to\R$ is continuous on a compact space, and therefore uniformly continuous. Thus, for every $\beta>0$ there exists $\tilde{\rho} >0$ such that for all $\rho < \tilde{rho}$ we have
$G(x,\rho) \ge 1-\beta$ for every $x\in\cM$. In other words, for $n$ large enough, we have that
\[
	F(B_{(1-\alpha)r_n}(x)) \ge (1-\beta)(1-\alpha)^m r_n^m\omega_m f(x),
\]
for every $x\in\cM$. Since $f(x) \ge \fmin > 0$, we have that,
\[
	\prob{\cM\not \subset \cU(\cP_n, r_n)} \le D(\alpha r_n)^{-m} e^{-(1-\alpha)^m(1-\beta)\fmin\omega_m n r_n^m}.
\]

We can now prove the two parts of the proposition.
\begin{enumerate}
\item
If we take $n\rnm \ge C \log n$ with $C \ge \frac{1}{(1-\alpha)^m(1-\beta)\fmin \omega_m}$, then we have
\[
\prob{\cM \not\subset \cU(\cP_n, r_n)} \le \tilde{D} \frac{1}{\log n} \to 0.
\]
Since we can choose $\alpha,\beta$ to be arbitrarily small, this statement holds for every $C >\frac{1}{\fmin \omega_m}$.

\item Similarly, if we take  $n\rnm \ge C \log n$ with $C \ge \frac{2+\eps}{(1-\alpha)^m(1-\beta)\fmin \omega_m}$, then we have
\[
\prob{\cM \not\subset \cU(\cP_n, r_n)} \le \tilde{D} \frac{1}{n^{(1+\eps)}\log n}.
\]
Therefore, we have that
\[
	\sum_{n=1}^{\infty} \prob{\cM \not \subset \cU(\cP_n,r_n))} < \infty,
\]
and from the Borel-Cantelli Lemma, we conclude that a.s.\! there exists $M>0$ such that for every $n>M$ we have $\cM \subset \cU(\cP_n,r_n)$.
\end{enumerate}
\end{proof}

To prove the result on $\Nkt$, we first prove the following lemma.
\begin{lem}\label{lem:mean_global}
For every $\eps > 0 $, if $C> \frac{1+\eps}{\fmin\omega_m}$, and $n\rnm \ge C \log n$, then there exists $D\ge 0$, such that
\[
	\mean{\Nkt-\Nk} \le D n^{-\eps}.
\]

\end{lem}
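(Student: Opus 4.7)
The plan is to run the Palm-theoretic calculation of Theorem \ref{thm:lim_supcrit}, but restricted to the annular event $R(\cY)\in(r_n,\hat r_n]$, and to exploit the fact that the lower bound $R\ge r_n$ forces the ``empty ball'' probability $e^{-np_c(\cY)}$ to be polynomially small in $n$. Writing
\[
\Nkt-\Nk=\sum_{\cY\subset\cP_n,\,\abs{\cY}=k+1} h^c(\cY)\,\ind\set{R(\cY)\in(r_n,\hat r_n]}\,\ind\set{\cP_n\cap B(\cY)=\emptyset}
\]
and applying Palm theory exactly as before yields
\[
\mean{\Nkt-\Nk}=\frac{n^{k+1}}{(k+1)!}\int_{\cM^{k+1}} f(\bx)\,h^c(\bx)\,\ind\set{R(\bx)\in(r_n,\hat r_n]}\,e^{-np_c(\bx)}\,d\bx.
\]
I would then perform the same change of variables $x_0\mapsto x\in\cM$, $x_i\mapsto\exp_x(s_n y_i)$ with $s_n=n^{-1/m}$ that was used in the proof of Theorem \ref{thm:lim_supcrit}, so that the prefactor $n^{k+1}$ combines with $s_n^{mk}$ from the Jacobian into a single $n$. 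The upper bound $R(\bx)\le\hat r_n\to0$ keeps the images $\exp_x(s_n\by)$ inside the injectivity radius of $\cM$, which legitimates the substitution. By the uniform-continuity argument culminating in \eqref{eq:p_lower_bound_super}, for any $\beta>0$ and $n$ large one has $p_c(x,\exp_x(s_n\by))\ge(1-\beta)s_n^m\omega_m R^m(0,\by)\fmin$ uniformly in $x,\by$, while Lemma \ref{lem:geodesic_bounds} turns the constraint $R(\bx)\ge r_n$ into $R(0,\by)\ge T_n:=(1-o(1))(nr_n^m)^{1/m}\ge(1-o(1))(C\log n)^{1/m}$. Together these reductions give
\[
\mean{\Nkt-\Nk}\le C_1\,n\int_{\set{R(0,\by)\ge T_n}} e^{-(1-\beta)\omega_m\fmin R^m(0,\by)}\,d\by.
\]

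The heart of the estimate is the tail integral above. Since $R(0,\cdot)$ is positively $1$-homogeneous on $(\R^m)^k$ and the set $\Omega_1:=\set{R(0,\by)\le1}$ is bounded (it is contained in $B_2(0)^k$ because $R(0,\by)\ge\max_i\abs{y_i}/2$) with finite volume $V_0$, a polar-type substitution and the standard asymptotics of the incomplete Gamma function yield, with $c=(1-\beta)\omega_m\fmin$,
\[
\int_{\set{R(0,\by)\ge T}} e^{-cR^m(0,\by)}\,d\by=\frac{V_0}{c^k}\int_{cT^m}^\infty u^{k-1}e^{-u}\,du\le C_2 T^{m(k-1)}e^{-cT^m}
\]
for $T$ large. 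Plugging in $T=T_n$ and using $T_n^m\ge(1-o(1))C\log n$,
\[
\mean{\Nkt-\Nk}\le C_3\,n(\log n)^{k-1}\,n^{-(1-\beta)\omega_m\fmin C(1-o(1))}.
\]
Because $C>(1+\eps)/(\omega_m\fmin)$, one can choose $\beta>0$ small enough that $(1-\beta)\omega_m\fmin C\ge1+\eps+\eps'$ for some $\eps'>0$; the polynomial logarithm is then absorbed into an extra $n^{-\eps'/2}$ factor for $n$ large, which produces the desired bound $\mean{\Nkt-\Nk}\le D n^{-\eps}$.

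The main obstacle I anticipate is the tail-integral estimate: $R(0,\by)$ is not a norm but only positively $1$-homogeneous in $\by$, so the polar decomposition must be carried out on the cone over the bounded set $\Omega_1$ and one should verify that the $mk$-dimensional volume is indeed finite. A safer but cruder route uses the inequality $R^m(0,\by)\ge(k2^m)^{-1}\sum_i\abs{y_i}^m$ together with the split $e^{-cR^m}=e^{-cR^m/2}\cdot e^{-cR^m/2}$; however, this loses a factor of $2$ in the exponent and would force the stronger hypothesis $C>2(1+\eps)/(\omega_m\fmin)$, so the homogeneity argument appears necessary in order to achieve the sharp constant stated in the lemma.
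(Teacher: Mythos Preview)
Your argument is correct and reaches the sharp constant, but the route differs from the paper's in one essential place, and your step~4 is under-justified as written.

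\textbf{Where the two approaches diverge.} Both start from the Palm formula and the substitution $x_i\mapsto\exp_x(s_n y_i)$, and both use the uniform lower bound $p_c(x,\exp_x(s_n\by))\ge(1-\alpha)\,s_n^m\omega_m R^m(0,\by)\fmin$ coming from the compactness argument behind \eqref{eq:p_lower_bound_super}. The paper, however, also records a second lower bound directly in terms of the \emph{ambient} circumradius,
\[
p_c(x,\exp_x(s_n\by))\ \ge\ (1-\alpha)\,\omega_m\,R^m(x,\exp_x(s_n\by))\,f(x)\ \ge\ (1-\alpha)\,\omega_m\,r_n^m\,\fmin
\]
on the annular event $R(\bx)\ge r_n$. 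It then writes $np_c=\beta\cdot np_c+(1-\beta)\cdot np_c$ and applies the two bounds to the two summands, so that
\[
e^{-np_c}\ \le\ e^{-\beta(1-\alpha)\omega_m\fmin R^m(0,\by)}\cdot e^{-(1-\beta)(1-\alpha)\omega_m\fmin\,n r_n^m}.
\]
The first factor makes the $\by$-integral finite; the second is $\le n^{-(1-\alpha)(1-\beta)\omega_m\fmin C}$, which beats the prefactor $n$ once $C>(1+\eps)/(\omega_m\fmin)$ and $\alpha,\beta$ are small. This bypasses entirely your tail-integral computation and the conversion of $R(\bx)\ge r_n$ into a constraint on $R(0,\by)$.

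\textbf{The gap in step 4.} Your reduction ``Lemma~\ref{lem:geodesic_bounds} turns $R(\bx)\ge r_n$ into $R(0,\by)\ge(1-o(1))(nr_n^m)^{1/m}$'' asks for a comparison of \emph{circumradii}, namely $R(0,\nabla_{s_n}(x,\by))\le(1+o(1))R(0,\by)$, whereas Lemma~\ref{lem:geodesic_bounds} only compares $\|y_i\|$ with $\|\nabla_{s_n}(x,y_i)\|$. Since the circumradius is not controlled by the individual norms (nearly collinear configurations can have arbitrarily large circumradius with bounded $\|y_i\|$), this step needs an extra argument. It can be supplied: normalize $\bv=\by/R(0,\by)$, observe that $\{R(0,\bv)=1\}\subset(\overline{B_2(0)})^k$ is compact, use the scaling identity $\nabla_{s_n}(x,\by)=R(0,\by)\,\nabla_{s_nR(0,\by)}(x,\bv)$, and invoke uniform continuity of $(x,\bv,\rho)\mapsto R(0,\nabla_\rho(x,\bv))$ on $\cM\times\{R(0,\bv)=1\}\times[0,\rho_0]$ together with $s_nR(0,\by)\le 3\hat r_n\to0$. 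With this fix your polar/incomplete-Gamma estimate goes through; the paper's convex-split trick simply avoids the need for it.
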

\begin{proof}
Similarly to the computation of $\Nk$, we have that
\[
    \mean{\Nkt} = \frac{n^{k+1}}{(k+1)!} \mean{h^c_{\rnh}(\cY) e^{-p_c(\cY)}}.
\]
Thus,
\begin{align*}
\mean{\Nkt - \Nk} &= \frac{n}{(k+1)!}\int_{\cM}\int_{(\Rm)^k} f(x,\exp_x(s_n \by))\\
&\times(h_{\rnh}^{c}(x,\exp_x(s_n\by))-h_{r_n}^{c}(x,\exp_x(s_n\by))) e^{-np_c(x,\exp_x(s_n\by))} d\by dx.
\end{align*}

Next, using Lemma \ref{lem:lim_prob_vol} we have that
\[
	\limninf \frac{p_c(x,\exp_x(s_n\by))}{\omega_m R^m(x,\exp_x(s_n\by))} = \limninf \frac{p_c(x,\exp_x(s_n\by))}{\omega_m s_n^m R(0,\by)} = f(x).
\]
We can use similar uniform continuity arguments to the ones used in the proof of Theorem \ref{thm:lim_crit}, to show that for a large enough $n$ we have that both
\begin{equation}\label{eq:p_bound_1}
	p_c(x,\exp_x(s_n\by)) \ge (1-\alpha)\omega_m R^m(x,\exp_x(s_n\by)) f(x),
\end{equation}
and
\begin{equation}\label{eq:p_bound_2}
	p_c(x,\exp_x(s_n\by)) \ge (1-\alpha)\omega_m s_n^m R^m(0,\by) f(x),
\end{equation}
for any $\alpha>0$. Now, if
\[
h_{\rnh}^{c}(x,\exp_x(s_n\by))- h_{r_n}^{c}(x,\exp_x(s_n\by))\ne 0,
\]
then necessarily $R(x, \exp_x(s_n\by)) \ge r_n$,
and from \eqref{eq:p_bound_1} we have that
\[
	p_c(x,\exp_x(s_n\by)) \ge (1-\alpha)\fmin \omega_m \rnm.
\]
Combining that with \eqref{eq:p_bound_2}, for every $\beta \in (0,1)$ we have that
\[
	np_c(x,\exp_x(s_n\by)) \ge \beta(1-\alpha)\fmin\omega_mR^m(0,\by) + (1-\beta)(1-\alpha)\fmin\omega_m n\rnm.
\]

Thus, we have that
\begin{align*}
&\mean{\Nkt - \Nk} \\
&\quad\le \frac{n e^{-(1-\alpha )(1-\beta)\omega_m \fmin n r_n^m}}{(k+1)!}\int_{\cM}\int_{(\Rm)^k}& \fmin^k f(x) e^{-\beta(1-\alpha)\fmin \omega_m R^m(0,\by)} d\by dx.
\end{align*}
The integral on the RHS is bounded. Thus, for any $\eps > 0$, if $C \ge \frac{1}{(1-\alpha)(1-\beta)}\frac{1+\eps}{\fmin \omega_m}$,  and $nr_n^m \ge C \log n$,  then
\[
	\mean{\Nkt-\Nk} \le D n^{-\eps}.
\]
This is true for any $\alpha,\beta>0$. Therefore, the statement holds for any $C>\frac{1+\eps}{\fmin \omega_m}$.
\end{proof}

\begin{proof}[Proof of Proposition \ref{prop:lim_glob}]
\begin{enumerate}
\item For every $1\le k \le m$,
\[
	\prob{\Nk \ne \Nkt} \le \mean{\Nkt-\Nk}.
\]
From Lemma \ref{lem:mean_global} we have that if $n\rnd \ge C\log n$ with $C>(\fmin \omega_m)^{-1}$ then
\[
	\limninf \prob{\Nk \ne \Nkt} = 0.
\]
Since
\[	
\prob{\exists k : \Nk \ne \Nkt } \le \sum_{k=1}^m \prob{\Nk \ne \Nkt} \to 0,
\]
we have that
\[
	\limninf \prob{\Nk = \Nkt,\ \forall 1\le k \le m} = 1.
\]

\item Next, if $C > 2(\fmin \omega_m)^{-1}$, then there exists $\eps > 0$ such that $2C > \frac{2+\eps}{\fmin \omega_m}$. Using Lemma \ref{lem:mean_global} we have that for $1\le k \le d$ there exists $D_k>0$ such that
\[
	\prob{\Nk \ne \Nkt} \le D_kn^{-(1+\eps)},
\]
Thus,
\[
	\sum_{n=1}^\infty \prob{\Nk \ne \Nkt} \le D_k\sum_{n=1}^\infty n^{-(1+\eps)}  < \infty.
\]
Using the Borel-Cantelli Lemma, we deduce that almost surely there exists $M_k>0$ (possibly random) such that for every $n>M_k$  we have
\[
	\Nk = \Nkt.
\]
Taking $M = \max_{1\le k \le m} M_k$, yields that for every $n>M$
\[
	\Nk = \Nkt,\quad \forall 1\le k \le m,
\]
which completes the proof.
\end{enumerate}
\end{proof}

\begin{proof}[Proof of Theorem \ref{thm:lim_betti_sup}]
If $n\rnm \ge C\log n$, and $C>(\omega_m \fmin)^{-1}$, then from Proposition \ref{prop:lim_coverage} we have that
\[
	\limninf\prob{\cM \subset \cU(\cP_n,r_n)} = 1.
\]
The deformation retract argument in \cite{niyogi_finding_2008} (Proposition 3.1) states that if $\cM \subset \cU(\cP_n, r_n)$, then $\cU(\cP_n, 2r_n)$ deformation retracts to $\cM$, and in particular - $\beta_k(\cU(\cP_n, 2r_n)) = \beta_k(\cM)$ for all $k$.
Thus, we have that
\begin{equation}\label{eq:lim_beta_2rn}
	\limninf\prob{\beta_k(\cU(\cP_n,2r_n)) = \beta_k(\cM)} = 1.
\end{equation}
Next,from Proposition \ref{prop:lim_glob} we have that
\[
	\limninf \prob{\Nk = \Nkt, \forall 1\le k \le m} = 1.
\]
By Morse theory, if $\Nk = \Nkt$ for every $k$, then necessarily $\beta_k(\cU(\cP_n,r_n)) = \beta_k(\cU(\cP_n,\hat{r}_n))$ for every $0\le k \le m$ (no critical points between $r_n$ and $\hat{r}_n$ implies no changes in the homology). Choosing $\hat{r}_n  = 2r_n$, we have that
\begin{equation}\label{eq:lim_rn_2rn}
	\limninf\prob{\beta_k(\cU(\cP_n, r_n)) = \beta_k(\cU(\cP_n, 2r_n))} = 1.
\end{equation}
Combining \eqref{eq:lim_beta_2rn} with \eqref{eq:lim_rn_2rn} yields
\[
	\limninf \prob{\bk = \beta_k(\cM),\ \forall 0\le k \le m} = 1,
\]
which completes the proof of the first part.
For the second part of the theorem , repeat the same arguments using the second part of propositions \ref{prop:lim_coverage} and \ref{prop:lim_glob}.
\end{proof} 

\section*{Acknowledgements}
The authors would like to thank: Robert Adler, Shmuel Weinberger, John Harer, Paul Bendich, Guillermo Sapiro, Matthew Kahle, Matthew Strom Borman,
and Alan Gelfand for many useful discussions. We would also like to thank the anonymous referee.

\appendix
\section{Palm Theory for Poisson Processes}

This appendix contains a collection of definitions and theorems which are used in the proofs of this paper. Most of the results are cited from \cite{penrose_random_2003}, although they may not necessarily have originated there. However, for notational reasons we refer the reader to \cite{penrose_random_2003}, while other resources include \cite{ arratia_two_1989, stoyan_stochastic_1987}.
The following theorem is very useful when computing expectations related to Poisson processes.

\begin{thm}[Palm theory for Poisson processes, \cite{penrose_random_2003}
Theorem 1.6]
\label{thm:palm}
Let $f$ be a probability density on $\R^d$, and let $\cP_n$ be a Poisson process on $\R^d$ with intensity $\lambda_n = n f$.
Let $h(\cY,\cX)$ be a measurable function defined for all finite subsets $\cY \subset \cX \subset \R^d$  with $\abs{\cY} = k$. Then
\[
    \E\Big\{\sum_{ \cY \subset \cP_n}
    h(\cY,\cP_n)\Big\} = \frac{n^k}{k!} \mean{h(\cY',\cY' \cup \cP_n)}
\]
where $\cY'$ is a set of $k$ $iid$ points in $\Rd$ with density $f$, independent of $\cP_n$.
\end{thm}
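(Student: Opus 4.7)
The plan is to reduce the expectation on the left-hand side to an iterated integral by conditioning on the total number of points $N := |\cP_n|$, and then exploiting exchangeability. The key structural fact is that $\cP_n$ admits a mixture representation: $N \sim \pois{n}$, and conditional on $N = m$ the atoms of $\cP_n$ are $m$ iid points with density $f$.

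First I would condition on $\{N = m\}$ and write $\cP_n = \{X_1, \dots, X_m\}$ with $X_i$ iid with density $f$. The sum on the left has $\binom{m}{k}$ terms, one per unordered $k$-subset of indices, and by exchangeability of the $X_i$'s each conditional summand has the same expectation as $\mean{h(\{X_1,\dots,X_k\}, \{X_1,\dots,X_m\})}$. Summing over $m$ with Poisson weights $e^{-n}n^m/m!$ then gives
\[
\mean{\sum_{\cY\subset\cP_n} h(\cY,\cP_n)} = \sum_{m=k}^\infty e^{-n}\frac{n^m}{m!}\binom{m}{k}\mean{h(\cY',\cY'\cup \{X_{k+1},\dots,X_m\})},
\]
where $\cY' := \{X_1,\dots,X_k\}$, and the two blocks of points are independent since the $X_i$'s are.

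Second, after substituting $j = m-k$ the combinatorial prefactor $\binom{m}{k}/m!$ simplifies to $1/(k!\,(m-k)!)$, which pulls out a clean $n^k/k!$ and leaves $\sum_{j\ge 0} e^{-n}n^j/j!$ times the inner expectation. I would identify this $j$-sum as an expectation under a $\pois{n}$ mixture over an iid $f$-sample of random size $j$, that is, over a Poisson process $\cP_n''$ on $\Rd$ with intensity $nf$ that is independent of $\cY'$. Since the distribution of $\cY' \cup \cP_n''$ matches that of $\cY' \cup \cP_n$ with $\cP_n$ on the right-hand side interpreted as an independent copy (in the same distributional sense as the statement), this delivers $\frac{n^k}{k!}\mean{h(\cY',\cY'\cup\cP_n)}$.

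The main obstacle is purely measure-theoretic: justifying the interchange of expectation, the infinite sum over $m$, and the finite sum over subsets. I would handle this by first proving the identity for $h\ge 0$ via Tonelli's theorem, then extending to signed measurable $h$ by writing $h = h^+ - h^-$ whenever either side is finite. The only other subtlety is the notational identification of the fresh independent Poisson process obtained from the $\pois{n}$ mixture of the leftover iid points with the $\cP_n$ appearing on the right-hand side; this is a restatement of the superposition/restriction property of Poisson processes and is immediate from the mixture representation used throughout.
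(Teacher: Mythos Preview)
Your argument is correct and is the standard proof of this Palm formula: condition on the Poisson count, use exchangeability to reduce to a single ordered $k$-tuple, peel off $n^k/k!$ from the combinatorics, and recognize the leftover $\pois{n}$-mixture of iid points as an independent copy of $\cP_n$. The paper does not actually supply its own proof of this theorem; it simply quotes it from Penrose's book (Theorem~1.6 in \cite{penrose_random_2003}), where the proof follows exactly the conditioning-and-exchangeability route you outline. So there is nothing to compare against beyond noting that your write-up matches the textbook derivation.
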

We shall also need the following corollary, which treats second moments:
\begin{cor}\label{cor:palm2}
With the notation above, assuming $\abs{\cY_1} = \abs{\cY_2} = k$,
\[
    \E\Big\{\sum_{ \substack {
                    \cY_1 ,\cY_2\subset \cP_n  \\
                    \abs{\cY_1 \cap \cY_2} = j }}
    h(\cY_1,\cP_n)h(\cY_2,\cP_n)\Big\} = {\frac{n^{2k-j}}{j!((k-j)!)^2}} \mean{h(\cY_1',\cY_{12}' \cup \cP_n)h(\cY_2',\cY_{12}' \cup \cP_n)}
\]
where $\cY'_{12} = \cY'_1 \cup \cY'_2$ is a set of $2k-j$ $iid$ points in $\Rd$ with density $f(x)$, independent of $\cP_n$, and $\abs{\cY_1'\cap\cY_2'} = j$.
\end{cor}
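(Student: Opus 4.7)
The plan is to reduce the claim to a single application of Theorem \ref{thm:palm} applied to unordered subsets of size $2k-j$, by reindexing the double sum through the union $\cY_{12} := \cY_1 \cup \cY_2$. First I would observe that every ordered pair of $k$-subsets $(\cY_1,\cY_2)$ of $\cP_n$ satisfying $|\cY_1\cap\cY_2|=j$ is determined uniquely by specifying (i) its union $\cY_{12}$, which is a $(2k-j)$-subset of $\cP_n$, (ii) a $j$-element subset of $\cY_{12}$ to serve as $\cY_1\cap\cY_2$, and (iii) an ordered partition of the remaining $2(k-j)$ points into two $(k-j)$-blocks (the first of which is $\cY_1\setminus\cY_2$). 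Thus, for each fixed $\cY_{12}$ of size $2k-j$, the number of valid decompositions is
\[
  \binom{2k-j}{j}\binom{2k-2j}{k-j}.
\]

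Next I would apply Theorem \ref{thm:palm} with the subset size $k$ replaced by $2k-j$, taking the ``outer'' test function to be the inner sum above. This yields
\[
  \frac{n^{2k-j}}{(2k-j)!}\,\E\Bigl\{\sum_{\substack{\cY_1'\cup\cY_2'=\cY_{12}'\\ |\cY_1'|=|\cY_2'|=k,\ |\cY_1'\cap\cY_2'|=j}} h(\cY_1',\cY_{12}'\cup\cP_n)\,h(\cY_2',\cY_{12}'\cup\cP_n)\Bigr\},
\]
where $\cY_{12}'$ is a set of $2k-j$ i.i.d.\ points with density $f$, independent of $\cP_n$. Because the points of $\cY_{12}'$ are exchangeable and $h$ depends only on the underlying set of points, every one of the $\binom{2k-j}{j}\binom{2k-2j}{k-j}$ decompositions produces the same value for the expectation, so the inner sum collapses to that common value times the combinatorial count.

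The proof then finishes with the routine identity
\[
  \frac{1}{(2k-j)!}\binom{2k-j}{j}\binom{2k-2j}{k-j}
  = \frac{1}{j!\,((k-j)!)^2},
\]
which is obtained by expanding the binomials and cancelling $(2k-j)!$ and $(2k-2j)!$. Substituting this into the display above yields exactly the prefactor $n^{2k-j}/(j!\,((k-j)!)^2)$ demanded by the statement. The only real work is the bookkeeping of step (i)--(iii) and verifying that the map $(\cY_1,\cY_2)\mapsto \cY_{12}$ is many-to-one with the stated multiplicity; once this is in place, the exchangeability of $\cY_{12}'$ and Theorem \ref{thm:palm} do all of the heavy lifting, and no analytic difficulty remains.
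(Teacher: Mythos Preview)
Your argument is correct and is the standard derivation of this corollary from Theorem \ref{thm:palm}. The paper itself states the corollary without proof (it is simply recorded in the appendix as a consequence of Theorem \ref{thm:palm}, which in turn is quoted from \cite{penrose_random_2003}), so there is nothing to compare against; your reindexing-by-the-union followed by exchangeability and the identity $\frac{1}{(2k-j)!}\binom{2k-j}{j}\binom{2k-2j}{k-j}=\frac{1}{j!\,((k-j)!)^2}$ is exactly the intended route.
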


\section{Stein's Method}\label{sec:apndx_stein}
In this paper we omitted the proofs for the limit distributions in Theorems \ref{thm:dist_subcrit}, \ref{thm:lim_crit}, and \ref{thm:lim_supcrit},  referring the reader to \cite{bobrowski_distance_2011}, where these results were proved for point processes in a Euclidean space. These proof mainly rely on moment computations similar to the ones presented in this paper, but technically more complicated. In this section we wish to introduce the main theorems used in these proofs. 

The theorems below are two instances of  \emph{Stein's method}, used to prove limit distribution for sums of weakly dependent variables. To adapt these method to the statements in this paper, one can think of the random variables $\xi_i$ as some version of the Bernoulli variables $\grn^b(\cY,\cP_n), \grn^c(\cY,\cP_n)$ used in this paper.

\begin{defn}\label{def:dep_graph}
Let $(I,E)$ be a graph. For $i,j\in I$ we denote $i \sim j$ if $(i,j) \in E$. Let $\set{\xi_i}_{i\in I}$ be a set of random variables. We say that $(I,\sim)$ is a dependency graph for $\set{\xi_i}$ if for every $I_1\cap I_2 = \emptyset$, with no edges between $I_1$ and $I_2$, the set of variables $\set{\xi_i}_{i\in I_1}$ is independent of $\set{\xi_i}_{i\in I_2}$. We also define the neighborhood of $i$ as $\cN_i := \set{i} \cup \set{j \in I \: j\sim i}$.
\end{defn}

\begin{thm}[Stein-Chen Method for Bernoulli Variables, Theorem 2.1 in \cite{penrose_random_2003}] \label{thm:prelim:stein_bern}
Let $\set{\xi_i}_{i\in I}$ be a set of Bernoulli random variables, with dependency graph $(I,\sim)$. Let
\[
p_i := \mean{\xi_i},  \ \ p_{i,j} := \mean{\xi_i \xi_j}, \ \
 \lambda := \sum_{i\in I} p_i, \ \ W:= \sum_{i\in I} \xi_i,
\ \ Z\sim\pois{\lambda}.
\]
Then,
\[
\dtv{W}{Z} \le \min(3,\lambda^{-1}) \Big(\sum_{i\in I}\sum_{j\in \cN_i\bs \set{i}} p_{ij} + \sum_{i\in I}\sum_{j \in \cN_i} p_i p_j\Big).
\]
\end{thm}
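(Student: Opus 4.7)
The plan is to apply Stein's method for Poisson approximation. First, recall that
\[
\dtv{W}{Z} = \sup_{A\subset\Z_+}\abs{\prob{W\in A} - \prob{Z\in A}},
\]
and that for each $A$ the Chen-Stein equation
\[
\lambda g(k+1) - k g(k) = \ind\set{k\in A} - \prob{Z\in A}
\]
has a bounded solution $g = g_A$ satisfying the classical estimate
\[
\norm{\Delta g_A}_\infty := \sup_k\abs{g_A(k+1) - g_A(k)} \le \min(1,\lambda^{-1})
\]
uniformly in $A$. Substituting $k=W$ and taking expectations gives $\prob{W\in A}-\prob{Z\in A} = \mean{\lambda g_A(W+1) - Wg_A(W)}$, so the proof reduces to a uniform bound on this expectation; the constant $\min(3,\lambda^{-1})$ in the statement (rather than $\min(1,\lambda^{-1})$) absorbs a small numerical factor in the standard Chen estimates.

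Second, I exploit the dependency graph to control each $\mean{\xi_i g(W)}$. Decompose $W = \xi_i + V_i + U_i$ with
\[
V_i := \sum_{j\in\cN_i\bs\set{i}}\xi_j, \qquad U_i := \sum_{j\notin\cN_i}\xi_j.
\]
By Definition \ref{def:dep_graph} the pair $(\xi_i,V_i)$ is independent of $U_i$. Writing
\[
\mean{\lambda g(W+1) - Wg(W)} = \sum_i\paren{p_i\mean{g(W+1)} - \mean{\xi_i g(W)}}
\]
and comparing each bracket to the common reference $p_i\mean{g(U_i+1)}$ (using independence of $\xi_i$ from $U_i$ to pull $p_i$ out of the reference term for the second summand), the $U_i$-only pieces cancel and leave
\[
p_i\mean{g(W+1)} - \mean{\xi_i g(W)} = p_i\mean{g(U_i+V_i+\xi_i+1)-g(U_i+1)} - \mean{\xi_i\bigl(g(U_i+V_i+1) - g(U_i+1)\bigr)}.
\]

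Third, apply the telescoping estimate $\abs{g(a+b)-g(a)}\le b\norm{\Delta g}_\infty$ valid for $b\in\Z_+$. This bounds each $i$-term by $\norm{\Delta g_A}_\infty\bigl(p_i(p_i+\mean{V_i}) + \mean{\xi_i V_i}\bigr)$. Summing over $i$, and using $i\in\cN_i$ so that the diagonal $\sum_i p_i^2$ combines with $\sum_i p_i\mean{V_i} = \sum_i\sum_{j\in\cN_i\bs\set{i}} p_ip_j$ into the clean expression $\sum_i\sum_{j\in\cN_i} p_ip_j$, while $\sum_i\mean{\xi_iV_i} = \sum_i\sum_{j\in\cN_i\bs\set{i}} p_{ij}$, reproduces the two sums in the statement. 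Multiplying through by the uniform bound on $\norm{\Delta g_A}_\infty$ and taking the supremum over $A$ yields the inequality.

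The main technical obstacle is the first step: establishing the uniform Chen bound $\norm{\Delta g_A}_\infty \le \min(1,\lambda^{-1})$ valid for \emph{every} test set $A$, which requires a delicate recursive analysis of the explicit solution to the Poisson Stein equation, together with tracking the precise numerical constant ($3$ versus $1$) that enters the final inequality. The dependency-graph bookkeeping in steps two and three is by comparison purely algebraic, relying only on the conditional-independence structure baked into Definition \ref{def:dep_graph}.
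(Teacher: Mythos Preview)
The paper does not prove this theorem at all: it is stated in Appendix~B as a quotation of Theorem~2.1 in Penrose's monograph, with no argument given, and is used only as a black box for the Poisson limit in Theorem~\ref{thm:dist_subcrit}. So there is no ``paper's own proof'' to compare against.

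That said, your sketch is the standard Stein--Chen dependency-graph argument (essentially the proof one finds in Penrose or in Barbour--Holst--Janson), and the algebra is correct: the decomposition $W=\xi_i+V_i+U_i$, the cancellation of the $p_i\mean{g(U_i+1)}$ reference terms via independence of $\xi_i$ from $U_i$, and the telescoping bound all go through as you wrote. The only loose end is the numerical constant: the classical bound on the forward difference of the Stein solution is $\norm{\Delta g_A}_\infty\le\min(1,\lambda^{-1})$, which would actually give a sharper inequality than the $\min(3,\lambda^{-1})$ stated here, so your ``absorbs a small numerical factor'' remark is harmless but unnecessary.
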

\begin{thm}[CLT for sums of weakly dependent variables, Theorem 2.4 in  \cite{penrose_random_2003}]\label{thm:clt_stein}
Let $(\xi_i)_{i\in I}$ be a finite collection of random variables, with $\mean{\xi_i} = 0$. Let $(I,\sim)$ be the dependency graph of $(\xi_i)_{i\in I}$, and assume that its maximal  degree is $D-1$. Set $W:=\sum_{i\in I} \xi_i$, and suppose that $\mean{W^2}=1$. Then for all $w\in \R$,
\[
\abs{F_W(w) - \Phi(w)} \le 2(2\pi)^{-1/4} \sqrt{D^2 \sum_{i\in I} \mean{\abs{\xi_i}^3}} + 6\sqrt{D^3 \sum_{i\in I} \mean{\abs{\xi_i}^4}},
\]
where $F_W$ is the  distribution function of $W$ and $\Phi$ that of a standard Gaussian.
\end{thm}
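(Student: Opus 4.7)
The plan is to apply Stein's method with smoothed test functions, then exploit the dependency graph to convert moment hypotheses into a Kolmogorov-distance bound. Fix $w \in \R$ and a smoothing scale $\eps > 0$, and let $h_{w,\eps}$ be the piecewise linear ramp that equals $1$ on $(-\infty,w]$, $0$ on $[w+\eps,\infty)$, and interpolates linearly on $[w,w+\eps]$. Let $f_{w,\eps}$ denote the bounded solution of Stein's equation $f'(x) - xf(x) = h_{w,\eps}(x) - \E\{h_{w,\eps}(Z)\}$ with $Z \sim \cN(0,1)$. Standard estimates give $\|f_{w,\eps}\|_\infty$ and $\|f_{w,\eps}'\|_\infty$ bounded by absolute constants, with $f_{w,\eps}'$ Lipschitz of constant $O(1)$ off of $[w,w+\eps]$ and exhibiting a jump of size $O(\eps^{-1})$ across this smoothing window. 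A smoothing inequality (based on $\|\phi\|_\infty = (2\pi)^{-1/2}$) then yields
\begin{equation*}
|F_W(w) - \Phi(w)| \;\le\; \bigl|\E\{f_{w,\eps}'(W) - W f_{w,\eps}(W)\}\bigr| \;+\; \eps/\sqrt{2\pi}.
\end{equation*}

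Next I would exploit the dependency structure. For each $i \in I$ set $V_i := \sum_{j \in \cN_i}\xi_j$ and $W^{(i)} := W - V_i$; by the defining property of the dependency graph, $(\xi_i, V_i)$ is independent of $W^{(i)}$. The mean-zero assumption gives $\E\{\xi_i f(W^{(i)})\} = 0$, while the pairwise independence of $\xi_i, \xi_j$ whenever $i \not\sim j$ combined with $\E\{W^2\} = 1$ yields $\sum_i \E\{\xi_i V_i\} = 1$. Substituting into $\E\{f'(W) - W f(W)\}$ and regrouping produces the identity
\begin{equation*}
\E\{f'(W) - W f(W)\} = \sum_i \E\{\xi_i V_i\}\bigl(\E\{f'(W)\} - \E\{f'(W^{(i)})\}\bigr) - \sum_i \E\Bigl\{\xi_i \!\int_0^{V_i}\!\bigl(f'(W^{(i)}+t) - f'(W^{(i)})\bigr)dt\Bigr\},
\end{equation*}
expressing the Stein remainder purely through increments of $f'$ over intervals of length $\le |V_i|$ weighted against $\xi_i$.

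Converting these increments to moment sums is then the technical heart. The Lipschitz portion of $f'$ contributes terms of order $\E\{|\xi_i| V_i^2\}$, while the jump portion contributes an $\eps^{-1}$-weighted term involving $\E\{|\xi_i| V_i^2 \ind_{A_i}\}$, where $A_i = \{W^{(i)} \in [w - |V_i|, w + \eps + |V_i|]\}$ is a localization event of probability bounded via a Gaussian-like concentration estimate for $W^{(i)}$ by $O(\eps + |V_i|)$. Using $|V_i| \le \sum_{j \in \cN_i}|\xi_j|$, $|\cN_i| \le D$, and the weighted AM--GM inequality $|\xi_i|\xi_j^2 \le \tfrac{1}{3}|\xi_i|^3 + \tfrac{2}{3}|\xi_j|^3$, one obtains $\sum_i \E\{|\xi_i|V_i^2\} \le D^2 \sum_i \E\{|\xi_i|^3\}$; the analogous estimate with $|V_i|^3$ in place of $V_i^2$ gives $\sum_i \E\{|\xi_i| |V_i|^3\} \le D^3 \sum_i \E\{|\xi_i|^4\}$.

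Finally, assemble and optimize. The third-moment pathway, after combining with the smoothing inequality and optimizing $\eps \sim (D^2 \sum \E\{|\xi_i|^3\})^{1/2} (2\pi)^{1/4}$ to balance the $\eps/\sqrt{2\pi}$ penalty against the Lipschitz Stein contribution (which, when controlled through the jump modulus, is $O(D^2 \sum \E\{|\xi_i|^3\}/\eps)$), produces the first summand $2(2\pi)^{-1/4}\sqrt{D^2 \sum_i \E\{|\xi_i|^3\}}$. A parallel argument using the fourth-moment bound $D^3 \sum \E\{|\xi_i|^4\}/\eps$ paired with the $\eps/\sqrt{2\pi}$ term and an analogous optimization produces $6\sqrt{D^3 \sum_i \E\{|\xi_i|^4\}}$. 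The main obstacle is the interplay between the discontinuity of $f_{w,\eps}'$ and the concentration of $W$ near the threshold: the $\eps^{-1}$ factor from the jump must be paired with an $O(\eps)$-concentration bound for $W$ on $[w,w+\eps]$ so that the fourth-moment term appears with only a $\sqrt{\cdot}$ and not an uncanceled $\eps^{-1}$, and tracking the precise numerical constants $2(2\pi)^{-1/4}$ and $6$ requires delicate bookkeeping through the AM--GM weights and the final $\eps$-optimization.
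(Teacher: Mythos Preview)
The paper does not prove this theorem at all: it is stated in the appendix as Theorem~2.4 of \cite{penrose_random_2003} and used as a black box. There is therefore no ``paper's proof'' to compare against.

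That said, your sketch follows the standard route (essentially the Baldi--Rinott argument as presented in Penrose): solve the Stein equation for a smoothed indicator, use the dependency-graph decomposition $W = W^{(i)} + V_i$ with $W^{(i)}$ independent of $(\xi_i,V_i)$, Taylor-expand $f'$, and control $\sum_i \mean{|\xi_i||V_i|^p}$ via AM--GM to get the $D^2$ and $D^3$ factors. One step you gloss over is genuinely delicate: bounding $\prob{W^{(i)} \in [w-|V_i|,\,w+\eps+|V_i|]}$ by $O(\eps + |V_i|)$. You cannot simply invoke a ``Gaussian-like concentration estimate'' for $W^{(i)}$, since that is essentially what you are trying to prove. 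The usual fix is to bound this probability by the length of the interval times $(2\pi)^{-1/2}$ \emph{plus} twice the Kolmogorov distance $\sup_w |F_W(w)-\Phi(w)|$ itself (after comparing $W^{(i)}$ back to $W$), yielding a self-referential inequality of the form $\delta \le A + B\delta$ that one then solves. Without this recursion the $\eps^{-1}$ from the jump of $f'$ does not cancel cleanly, and the constants $2(2\pi)^{-1/4}$ and $6$ will not fall out.
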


\section{Second Moment Computations}
\label{sec:apndx_moments}
In this section we briefly review the steps required to evaluate the second moment of either $\bk$ or $\Nk$ in order to compute the limit variance in Theorems \ref{thm:dist_subcrit}, \ref{thm:lim_crit}, and \ref{thm:lim_supcrit}. Similar computations are required to evaluate higher moments, which are needed in order to apply Stein's method for the limit distributions. The proofs follow the same steps as the proofs in both \cite{kahle_random_2011} and \cite{bobrowski_distance_2011}. These proofs are long and technically complicated, and since repeating them again for the manifold case should add no insight, we refer the reader to these papers for the complete proofs.

We present the statements in terms of $\Nk$, but the same line of arguments can be  applied to $\Sk$ as well (defined in \ref{eq:def_sk}).

The variance of $\Nk$ is 
\begin{equation}\label{eq:var_nk}
\var{\Nk}= \meanx{\Nk^2} - (\mean{N_k})^2.
\end{equation}
The first term on the right hand side can be written as
\begin{align}
\mean{\Nk^2} &= \mean{\sum_{\cY_1\subset\cP_n}\sum_{\cY_2\subset\cP_n} {\grn(\cY_1,\cP_n)\grn(\cY_2, \cP_n)}}
\notag
\\
&=\sum_{j=0}^{k+1}\mean{\sum_{\cY_1\subset\cP_n}\sum_{\cY_2\subset\cP_n}{ \grn(\cY_1,\cP_n)\grn(\cY_2,\cP_n)}\ind\set{\abs{\cY_1\cap\cY_2}=j}} \notag
\\
&:= \sum_{j=0}^{k+1} \mean{{I}_j}. \label{eq:def_I_j}
\end{align}

Note that
\begin{equation}\label{eq:mean_I_k1}
I_{k+1} = \sum_{ \cY_1 \subset \cP_n }
    \grn(\cY_1,\cP_n) =\Nk,
\end{equation}
and we know the limit of the expectation of this term in each of the regimes.

Next, for $0\le j<k+1$, using Corollary \ref{cor:palm2} we have
\begin{equation}\label{eq:I_j}
\mean{I_j} = \frac{n^{2k+2-j}}{j!((k+1-j)!)^2}\mean{ \grn(\cY_1',\cY_{12}'\cup\cP_n)\grn(\cY_2',\cY_{12}'\cup\cP_n)},
\end{equation}
where $\cY'_{12} = \cY'_1 \cup \cY'_2$ is a set of $(2k-j)$ $iid$ points in $\Rd$ with density $f(x)$, independent of $\cP_n$,  $\abs{\cY_1'} = \abs{\cY_2'} = k$, and $\abs{\cY_1'\cap\cY_2'} = j$. For $j>0$, the functional inside the expectation is nonzero for subsets $\cY_{1,2} '$ contained in a ball of radius $4r_n$. Thus, a change of variables similar to the ones used in the proof of Theorems \ref{thm:dist_subcrit}, \ref{thm:lim_crit} and \ref{thm:lim_supcrit}, can be used to show that this expectation on the right hand side of \eqref{eq:I_j} is $O(r_n^{m(2k+1-j)})$. If $j=0$ the sets are disjoint, and given $\cY_1'$ and $\cY_2'$ we have two options: If $B(\cY_1') \cap B(\cY_2') \ne \emptyset$, then a similar bound to the on above applies. Otherwise, the two balls are disjoint, and therefore the processes $B(\cY_1') \cap \cP_n$ and $B(\cY_1') \cap \cP_n$ are independent. In this case it can be show that the expected value cancels with $\meanx{\Nk^2}$ in \eqref{eq:var_nk}.

In the subcritical regime, the dominated term in \eqref{eq:def_I_j} would be $\mean{I_{k+1}}$, and from \eqref{eq:mean_I_k1} we have that $\var{N_k} \approx \mean{N_k}$. In the other regimes, all the terms in \eqref{eq:def_I_j} are $O(n)$, and thus the limit variance is $O(n)$ as well.


\bibliographystyle{plain}
\bibliography{refs}

\begin{thebibliography}{10}

\bibitem{adler_persistent_2010}
Robert~J. Adler, Omer Bobrowski, Matthew~S. Borman, Eliran Subag, and Shmuel
  Weinberger.
\newblock Persistent homology for random fields and complexes.
\newblock {\em Institute of Mathematical Statistics Collections}, 6:124--143,
  2010.

\bibitem{ABW}
Robert~J. Adler, Omer Bobrowski, and Shmuel Weinberger.
\newblock Crackle: The persistent homology of noise, 2013.
\newblock http://arxiv.org/abs/1301.1466.

\bibitem{adler_random_2007}
Robert~J. Adler and Jonathan~E. Taylor.
\newblock {\em Random fields and geometry}.
\newblock Springer Monographs in Mathematics. Springer, New York, 2007.

\bibitem{AswaniBickelTomlin}
Peter~Bickel Anil~Aswani and Claire Tomlin.
\newblock Regression on manifolds: Estimation of the exterior derivative.
\newblock {\em Annals of Statistics}, 39(1):48--81, 2011.

\bibitem{aronshtam_vanishing_2010}
Lior Aronshtam, Nathan Linial, Tomasz Luczak, and Roy Meshulam.
\newblock Vanishing of the top homology of a random complex.
\newblock {\em Arxiv preprint {arXiv:1010.1400}}, 2010.

\bibitem{arratia_two_1989}
Richard Arratia, Larry Goldstein, and Louis Gordon.
\newblock Two moments suffice for poisson approximations: the {{C}hen-{S}tein}
  method.
\newblock {\em The Annals of Probability}, 17(1):9--25, 1989.

\bibitem{AuffingerBenArous}
Antonio Auffinger and G\'{e}rard~Ben Arous.
\newblock Complexity of random smooth functions of many variables.
\newblock {\em Annals of Probability}, 2013.

\bibitem{BS1984}
A.J. Baddeley and B.W. Silverman.
\newblock A cautionary example on the use of second-order methods for analyzing
  point patterns.
\newblock {\em Biometrics}, 40:1089--1094, 1984.

\bibitem{baryshnikov2013min}
Yuliy Baryshnikov, Peter Bubenik, and Matthew Kahle.
\newblock Min-type morse theory for configuration spaces of hard spheres.
\newblock {\em International Mathematics Research Notices}, page rnt012, 2013.

\bibitem{BelkinNiyLap}
Mikhail Belkin and Partha Niyogi.
\newblock Towards a theoretical foundation for laplacian-based manifold
  methods.
\newblock In Peter Auer and Ron Meir, editors, {\em Learning Theory}, volume
  3559 of {\em Lecture Notes in Computer Science}, pages 486--500. Springer
  Berlin Heidelberg, 2005.

\bibitem{BenMukWang2012}
P.~Bendich, S.~Mukherjee, and B.~Wang.
\newblock Local homology transfer and stratification learning.
\newblock {\em ACM-SIAM Symposium on Discrete Algorithms}, 2012.

\bibitem{bobrowski2012thesis}
Omer Bobrowski.
\newblock Algebraic topology of random fields and complexes.
\newblock {\em PhD Thesis}, 2012.

\bibitem{bobrowski_distance_2011}
Omer Bobrowski and Robert~J. Adler.
\newblock Distance functions, critical points, and topology for some random
  complexes.
\newblock {\em {arXiv:1107.4775}}, July 2011.

\bibitem{bobrowski2012euler}
Omer Bobrowski and Matthew~Strom Borman.
\newblock Euler integration of {G}aussian random fields and persistent
  homology.
\newblock {\em Journal of Topology and Analysis}, 4(01):49--70, 2012.

\bibitem{borsuk_imbedding_1948}
Karol Borsuk.
\newblock On the imbedding of systems of compacta in simplicial complexes.
\newblock {\em Fund. Math}, 35(217-234):5, 1948.

\bibitem{Bryzgalova78}
L.N. Bryzgalova.
\newblock The maximum functions of a family of functions that depend on
  parameters.
\newblock {\em Funktsional. Anal. i Prilozhen}, 12(1):66--67, 1978.

\bibitem{bubenik_statistical_2009}
Peter Bubenik, Gunnar Carlsson, Peter~T. Kim, and Zhiming Luo.
\newblock Statistical topology via {{M}orse} theory, persistence and
  nonparametric estimation.
\newblock {\em 0908.3668}, August 2009.
\newblock Contemporary Mathematics, Vol. 516 (2010), pp. 75-92.

\bibitem{bubenik_statistical_2007}
Peter Bubenik and Peter~T. Kim.
\newblock A statistical approach to persistent homology.
\newblock {\em Homology, Homotopy and Applications}, 9(2):337--362, 2007.

\bibitem{TaylorWorsley2}
N.~Chamandy, K.J. Worsley, J.E. Taylor, and F.~Gosselin.
\newblock Tilted {E}uler characteristic densities for central limit random
  fields, with applications to "bubbles".
\newblock {\em Annals of Statistics}, 36(5):2471--2507, 2008.

\bibitem{ChaCohLie2009}
F.~Chazal, D.~Cohen-Steiner, and A.~Lieutier.
\newblock A sampling theory for compact sets in {E}uclidean space.
\newblock {\em Discrete and Computational Geometry}, 41:461--479, 2009.

\bibitem{ChenMuller}
Dong Chen and Hans-Georg M\"{u}ller.
\newblock Nonlinear manifold representations for functional data.
\newblock {\em Annals of Statistics}, 40(1):1--29, 2012.

\bibitem{Genoveseetal}
Isabella~Verdinelli Christopher R.~Genovese, Marco Perone-Pacifico and Larry
  Wasserman.
\newblock On the path density of a gradient field.
\newblock {\em Annals of Statistics}, 37(6A):3236--3271, 2009.

\bibitem{chung_persistence_2009}
Moo~K. Chung, Peter Bubenik, and Peter~T. Kim.
\newblock Persistence diagrams of cortical surface data.
\newblock In {\em Information Processing in Medical Imaging}, page 386–397,
  2009.

\bibitem{Diggle2003}
Peter~J. Diggle.
\newblock {\em Statistical Analysis of Spatial Point Patterns}.
\newblock Academic Press, 2003.

\bibitem{flatto1977random}
Leopold Flatto and Donald~J Newman.
\newblock Random coverings.
\newblock {\em Acta Mathematica}, 138(1):241--264, 1977.

\bibitem{gershkovich_morse_1997}
Vladimir Gershkovich and Hyam Rubinstein.
\newblock Morse theory for min-type functions.
\newblock {\em The Asian Journal of Mathematics}, 1(4):696--715, 1997.

\bibitem{hatcher_algebraic_2002}
Allen Hatcher.
\newblock {\em Algebraic topology}.
\newblock Cambridge University Press, Cambridge, 2002.

\bibitem{kahle_topology_2009}
Matthew Kahle.
\newblock Topology of random clique complexes.
\newblock {\em Discrete Mathematics}, 309(6):1658--1671, 2009.

\bibitem{kahle_random_2011}
Matthew Kahle.
\newblock Random geometric complexes.
\newblock {\em Discrete \& Computational Geometry. An International Journal of
  Mathematics and Computer Science}, 45(3):553--573, 2011.

\bibitem{kahle_limit_2010}
Matthew Kahle, Elizabeth Meckes, et~al.
\newblock Limit the theorems for {B}etti numbers of random simplicial
  complexes.
\newblock {\em Homology, Homotopy and Applications}, 15(1):343--374, 2013.

\bibitem{RoheChatterjeeYu}
Sourav~Chatterjee Karl~Rohe and Bin Yu.
\newblock Spectral clustering and the high-dimensional stochastic blockmodel.
\newblock {\em Annals of Statistics}, 39(4):1878--1915, 2011.

\bibitem{linial2006homological}
Nathan Linial and Roy Meshulam.
\newblock Homological connectivity of random 2-complexes.
\newblock {\em Combinatorica}, 26(4):475--487, 2006.

\bibitem{Luna:2009}
S.~Lunag\'omez, S.~Mukherjee, and Robert~L. Wolpert.
\newblock Geometric representations of hypergraphs for prior specification and
  posterior sampling, 2009.
\newblock http://arxiv.org/abs/0912.3648.

\bibitem{Matheron1975}
G.~Matheron.
\newblock {\em Random sets and integral geometry}.
\newblock John Wiley\thinspace \&\thinspace Sons, New York-London-Sydney, 1975.
\newblock With a foreword by Geoffrey S. Watson, Wiley Series in Probability
  and Mathematical Statistics.

\bibitem{Matov82}
V.I. Matov.
\newblock Topological classication of the germs of functions of the maximum and
  minimax of families of functions in general position.
\newblock {\em Uspekhi Mat. Nauk}, 37(4(226)):167--168, 1982.

\bibitem{ms05}
Klaus~R. Mecke and Dietrich Stoyan.
\newblock Morphological characterization of point patterns.
\newblock {\em Biometrical Journal}, 47(5):473--488, 2005.

\bibitem{MeesterRoy96}
R.~Meester and R.~Roy.
\newblock {\em Continuum percolation}.
\newblock Cambridge University Press, 1996.

\bibitem{memoli2005distance}
Facundo M{\'e}moli and Guillermo Sapiro.
\newblock Distance functions and geodesics on submanifolds of $\mathbb{R}^d$
  and point clouds.
\newblock {\em SIAM Journal on Applied Mathematics}, 65(4):1227--1260, 2005.

\bibitem{milnor_morse_1963}
John~W. Milnor.
\newblock {\em Morse theory}.
\newblock Based on lecture notes by M. Spivak and R. Wells. Annals of
  Mathematics Studies, No. 51. Princeton University Press, Princeton, {N.J.},
  1963.

\bibitem{MischaikowWanner}
Konstantin Mischaikow and Thomas Wanner.
\newblock Probabilistic validation of homology computations for nodal domains.
\newblock {\em Annals of Applied Probability}, 17(3):980--1018, 2007.

\bibitem{Molchanov2005}
I.~Molchanov.
\newblock {\em Theory of random sets}.
\newblock Springer., 2005.

\bibitem{MW2003}
J.~Moller and R.~Waagepetersen.
\newblock {\em Statistical Inference for Spatial Point Processes}.
\newblock Chapman \& Hall, 2003.

\bibitem{munkres1984elements}
James~R Munkres.
\newblock {\em Elements of algebraic topology}, volume~2.
\newblock Addison-Wesley Reading, 1984.

\bibitem{niyogi_finding_2008}
Partha Niyogi, Stephen Smale, and Shmuel Weinberger.
\newblock Finding the homology of submanifolds with high confidence from random
  samples.
\newblock {\em Discrete \& Computational Geometry. An International Journal of
  Mathematics and Computer Science}, 39(1-3):419--441, 2008.

\bibitem{niyogi_topological_2011}
Partha Niyogi, Stephen Smale, and Shmuel Weinberger.
\newblock A topological view of unsupervised learning from noisy data.
\newblock {\em {SIAM} Journal on Computing}, 40(3):646, 2011.

\bibitem{penrose_random_2003}
Mathew~D. Penrose.
\newblock {\em Random geometric graphs}, volume~5 of {\em Oxford Studies in
  Probability}.
\newblock Oxford University Press, Oxford, 2003.

\bibitem{penrose_limit_2011}
Mathew~D. Penrose and Joseph~E. Yukich.
\newblock Limit theory for point processes in manifolds.
\newblock {\em 1104.0914}, April 2011.

\bibitem{Ripley}
B.~D. Ripley.
\newblock The second-order analysis of stationary point processes.
\newblock {\em Annals of Applied Probability}, 13(2):255--266, 1976.

\bibitem{Adleretal}
Eliran~Subag Robert J.~Adler and Jonathan~E. Taylor.
\newblock Rotation and scale space random fields and the gaussian kinematic
  formula.
\newblock {\em Annals of Statistics}, 40(6):2910--2942, 2012.

\bibitem{WuMukZhou}
Qiang~Wu Sayan~Mukherjee and Ding-Xuan Zhou.
\newblock Learning gradients on manifolds.
\newblock {\em Bernoulli}, 16(1):181--207, 2010.

\bibitem{stoyan_stochastic_1987}
Dietrich Stoyan, Wilfried~S. Kendall, and Joseph Mecke.
\newblock {\em Stochastic geometry and its applications}.
\newblock Wiley Series in Probability and Mathematical Statistics: Applied
  Probability and Statistics. John Wiley \& Sons Ltd., Chichester, 1987.
\newblock With a foreword by D. G. Kendall.

\bibitem{TaylorWorsley1}
J.E. Taylor and K.J. Worsley.
\newblock Random fields of multivariate test statistics, with applications to
  shape analysis.
\newblock {\em Annals of Statistics}, 36(1):1--27, 2008.

\bibitem{worsley_boundary_1995}
Keith~J. Worsley.
\newblock Boundary corrections for the expected euler characteristic of
  excursion sets of random fields, with an application to astrophysics.
\newblock {\em Advances in Applied Probability}, pages 943--959, 1995.

\bibitem{worsley_estimating_1995}
Keith~J. Worsley.
\newblock Estimating the number of peaks in a random field using the
  {{H}adwiger} characteristic of excursion sets, with applications to medical
  images.
\newblock {\em The Annals of Statistics}, 23(2):640--669, April 1995.
\newblock Mathematical Reviews number {(MathSciNet):} {MR1332586;} Zentralblatt
  {MATH} identifier: 0898.62120.

\end{thebibliography}
\end{document}